\theoremstyle{plain}
\newtheorem{lemma}{Lemma}[section]
\newtheorem{theorem}[lemma]{Theorem}
\newtheorem{corollary}[lemma]{Corollary}
\newtheorem{claim}{Claim}
\newtheorem*{stat}{\name}
\newcommand{\name}{testing}
\newcommand{\N}{\mathbb{N}}
\theoremstyle{definition}
\newtheorem{definition}[lemma]{Definition}
\newtheorem*{problem*}{Problem}
\theoremstyle{remark}
\newtheorem{remark}[lemma]{Remark}
\newtheorem{notation}[lemma]{Notation}
\newtheorem*{remark*}{Remark}
\newcommand{\qedc}{{\qed}~{\rm Claim~{\theclaim}.}}
\newcommand{\qedsc}{{\qed}~{\rm Claim.}}
\numberwithin{equation}{section}
\newcommand{\set}[1]{\{#1\}}
\newcommand{\card}[1]{|{#1}|}
\newcommand{\ba}{\boldsymbol{a}}
\newcommand{\bb}{\boldsymbol{b}}
\newcommand{\bp}{\boldsymbol{p}}
\newcommand{\bq}{\boldsymbol{q}}
\newcommand{\br}{\boldsymbol{r}}
\newcommand{\bx}{\boldsymbol{x}}
\newcommand{\by}{\boldsymbol{y}}
\newcommand{\cA}{\mathcal{A}}
\newcommand{\ins}[1]{\setlength{\fboxsep}{1pt}\colorbox{white}{\text{\scriptsize{#1}}}}
\DeclareMathOperator{\id}{id}
\DeclareMathOperator{\Aut}{Aut}
\DeclareMathOperator{\End}{End}
\newcommand{\ignorer}[1]{}
\newcommand{\blank}{e}
\newcommand{\Z}{\mathbb{Z}}
\subjclass[2010]{
20F10. Secondary: 
68Q80, 
68Q05 
}
\keywords{Mealy automaton; automaton group; order problem; Engel problem; cellular automaton}
\begin{document}

\date{\today}

\title{An automaton group with undecidable order and Engel problems}

\author[P.~Gillibert]{Pierre Gillibert}

\address[P. Gillibert]{Institut f\"ur Diskrete Mathematik und Geometrie, Technische Universit\"at Wien, Austria}
\email[P. Gillibert]{pgillibert@yahoo.fr}
\urladdr[P. Gillibert]{http://www.gillibert.fr/pierre/}

\begin{abstract}
For every Turing machine, we construct an automaton group that simulates it. Precisely, starting from an initial configuration of the Turing machine, we explicitly construct an element of the group such that the Turing machine stops if, and only if, this element is of finite order.
If the Turing machine is universal, the corresponding automaton group has an undecidable order problem. This solves a problem raised by Grigorchuk.

The above group also has an undecidable Engel problem: there is no algorithm that, given $g, h$ in the group, decides whether there exists an integer~$n$ such that the $n$-iterated commutator $[\dots [[g,h],h],\dots,h]$ is the identity or not. This solves a problem raised by Bartholdi.
\end{abstract}

\maketitle

\section{Introduction}

Our goal in this paper is to prove the following result, which answers questions raised by Grigorchuk, Nekrashevych, and Sushchanski\u{\i} in \cite[Problem 7.2(a)]{GNS} and \cite[Problem 2(b)]{GrS}

\begin{theorem}
Recognizing in an automaton group whether an element is of finite order is an undecidable problem.
\end{theorem}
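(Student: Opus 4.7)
The plan is to give a uniform reduction from the halting problem to the order problem for automaton groups. Given a Turing machine $M$, I would construct a finite Mealy automaton $\cA_M$ whose associated automaton group $G_M$ contains, for every initial configuration $c$ of $M$, an explicitly defined element $g_{M,c}$ with the property that $g_{M,c}$ has finite order in $G_M$ if and only if $M$ halts on input $c$. Specializing $M$ to a universal Turing machine then yields a single automaton group in which the order problem encodes the halting problem, which is undecidable.

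For the construction I would let the alphabet encode tape symbols together with markers indicating the position of the head and the internal state of $M$, and design the transitions of $\cA_M$ so that, when $g_{M,c}$ acts on the infinite rooted tree over this alphabet, its sections (wreath-recursion restrictions to subtrees) at depth $k$ correspond to machine elements $g_{M,c_k}$, where $c_k$ is the configuration of $M$ after $k$ steps. The halting configurations should be engineered so that their associated sections are either trivial or elements of uniformly bounded finite order, while the transition automata produce no accidental cancellations between non-halting configurations.

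The main obstacle is simultaneously controlling both directions of the equivalence. For the \emph{halting implies finite order} direction, once the simulated machine reaches a halting configuration on one branch, every deeper section of $g_{M,c}$ on that branch must collapse to an element whose order divides some controlled integer, and one must synchronize this across all branches so that a common power of $g_{M,c}$ acts trivially on the whole tree; this typically forces the introduction of auxiliary ``wait'' states and a careful bookkeeping of how the action on the letters not carrying the head behaves. For the converse, \emph{non-halting implies infinite order}, one must rule out coincidental cancellations: if $M$ runs forever on $c$, the successive configurations $c_0, c_1, c_2, \dots$ are pairwise distinct in a witnessable way, and this distinctness has to be reflected in a vertex of the tree on which $g_{M,c}^n$ acts nontrivially for every $n \geq 1$. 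The self-similar identity $g_{M,c}^n = (g_{M,c_1}^n, \dots)\sigma^n$ makes this an inductive statement on depth, which is the technical core of the proof.

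Once the equivalence between halting of $M$ on $c$ and finite order of $g_{M,c}$ is established, the undecidability statement follows by taking $M$ universal: any algorithm deciding whether a given element of $G_M$ has finite order would, restricted to the computable family $c \mapsto g_{M,c}$, decide the halting problem for $M$. The Engel problem result announced in the abstract would then be obtained from a variant of the same construction, by arranging for the iterated commutator $[\dots [[g,h],h],\dots,h]$ to represent, up to a controlled modification, a machine-simulating element of the form $g_{M,c}$.
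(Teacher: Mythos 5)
Your proposal is an outline of the same high-level strategy as the paper (reduce the halting problem for a universal machine to the order problem by building, for each input configuration, a computable group element whose finiteness of order encodes halting), but it stops exactly where the actual proof begins: no Mealy automaton is constructed, and the two directions of the equivalence, which you correctly identify as the technical core, are only named as obstacles rather than resolved. In the paper the mechanism that makes both directions work is quite specific: the element $g_{\ba}=\sigma_{Q(C(\ba))C(\$)}$ is built with a binary-counter structure so that \emph{one step of the simulated (cellular-automaton) computation corresponds to squaring the element}. Concretely, $g_{\ba}^2$ fixes the prefix $Z(\ba\$,0)$ and its section beyond that prefix is $g_{\tau(\ba)}$ (Lemma~\ref{L:deltaZa2}), which gives the lower bound $\mathrm{ord}(g_{\ba})\ge 2\,\mathrm{ord}(g_{\tau(\ba)})$; a separate and substantial analysis (Lemma~\ref{L:ordrepetit} and its consequences) shows that on \emph{any} input word deviating from the ideal encoded tape the action collapses to orbits of length at most $4$, via the reversing state $\neg$, yielding the matching upper bound and hence order exactly $2^{k+2}$ when the machine halts after $k$ steps, infinite otherwise. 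Your sketch, in which the depth-$k$ sections of $g_{M,c}$ itself are the elements $g_{M,c_k}$, does not by itself produce such a dichotomy, and the identity you lean on, $g_{M,c}^n=(g_{M,c_1}^n,\dots)\sigma^n$, is not valid for a general wreath recursion: when the root permutation is nontrivial, the sections of $g^n$ are products of the sections of $g$ along the orbits of the root action, not $n$-th powers of them. That conflation hides precisely the synchronization and no-accidental-cancellation issues you flag.

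So the gap is concrete: without an explicit transition structure (the counters, the separator $\$$, the discrepancy-detecting state $\neg$, and the squaring-equals-one-step cross diagram, or some equally explicit substitute) there is no argument that non-halting forces infinite order, nor that halting forces a uniform finite power to act trivially on the whole tree. The claim about the Engel problem inherits the same gap, since in the paper it rests on the specific algebraic identity $[\dots[[\sigma_{\bp\bq},\sigma_{\neg\bq}],\sigma_{\neg\bq}],\dots,\sigma_{\neg\bq}]\equiv(\sigma_{\bp\bq})^{(-2)^n}$ for palindromes $\bp,\bq$ (Corollary~\ref{C:conjuguepuissance}), which is only available because of how the elements are built from palindromic words and the involution $\neg$.
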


We also obtain a negative result for the Engel problem for such groups, raised by Bartholdi in \cite{B1,B2}:

\begin{theorem}
Recognizing in an automaton group whether an $n$-iterated commutator $[\cdots[[v, w], w], \ldots, w]$ is trivial for some~$n$ is an undecidable problem.
\end{theorem}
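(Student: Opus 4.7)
The plan is to derive Engel undecidability from the order-problem undecidability of the previous theorem. By that theorem, given a universal Turing machine $M$ and input $c$, we have an element $g \in G$ of an automaton group whose order is finite if and only if $M$ halts on $c$. It suffices to exhibit, in some larger automaton group, elements $v$ and $w$ for which the iterated commutator $[\dots[[v,w],w],\dots,w]$ becomes trivial for some $n$ if and only if $g$ has finite order; composed with the universal Turing machine, this turns the halting problem into an instance of the Engel problem.

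The natural implementation is inside a wreath product $G \wr F$ with $F$ a finite group, which remains an automaton group when $G$ is and $F$ is realized as a finite permutation group on the top level of a rooted tree. Taking $v$ to be $g$ placed at a single coordinate and $w$ to be a generator of $F$, a direct computation shows that $\mathrm{ad}_w^n(v)$ lies in the base group $G^F$ with entry $g^{a_n(f)}$ at each coordinate $f$, where the integer exponents $a_n(f)$ are determined by the $F$-action. For a suitable choice of $F$ and action, one wants the exponents to have the following property: for every positive integer $k$ there is some $n$ with $k \mid a_n(f)$ for all $f$. Under that property, $\mathrm{ad}_w^n(v) = 1$ happens for some $n$ precisely when $g$ has finite order.

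The main obstacle is the arithmetic of the exponents $a_n(f)$. With $F = \mathbb{Z}/2$ a direct check gives $a_n(f) = \pm 2^{n-1}$, which only vanishes modulo $\mathrm{ord}(g)$ when the order is a power of $2$; cyclic $F$ of higher order has analogous limitations. The cleanest fix is to refine the construction underlying the previous theorem so that whenever the order of $g$ is finite it is actually a power of a fixed prime (say $2$), after which the $\mathbb{Z}/2$-wreath computation suffices. Alternatively one embeds into an iterated or branching wreath product whose exponent sequence is divisible by every positive integer from some point on. Either route requires a careful automaton-theoretic verification that $w$ is realizable by a finite Mealy machine while producing the intended exponent behavior; this verification, and its compatibility with the Turing-machine simulation of the previous theorem, is the technical heart of the argument.
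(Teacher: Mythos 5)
Your reduction has the right shape---arrange for the $n$-fold commutator to be $g^{\pm 2^{n}}$ (up to an index shift) and then exploit the fact that the finite orders arising from the simulation are powers of $2$---but as written it is a programme rather than a proof, and the two steps you defer are exactly the substantive ones. First, everything hinges on the claim that the element $g$ coming from the order-problem construction has $2$-power order whenever its order is finite. You treat this as an optional ``refinement'' to be arranged later; in fact it is the heart of the matter, and it is what Theorem~\ref{T:ordrenotreelement} and Corollary~\ref{C:ordrefiniarret} establish (the order is exactly $2^{k+2}$, where $k$ measures the halting time). Without it, triviality of $\mathrm{ad}_w^n(v)$ only detects ``order dividing a power of $2$,'' and the equivalence with the halting problem fails. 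Your fallback---exponents eventually divisible by every integer---cannot work with a fixed finite top group $F$: writing $P$ for the permutation matrix of $w$ on the coordinates, the exponent vectors are $(P-I)^{n-1}$ applied to a standard basis vector, and modulo any prime $p$ not dividing the order of $w$ the map $P-I$ is invertible on the zero-sum subspace of the relevant orbit, so these vectors never become divisible by such a $p$. Second, the Engel problem is posed inside an automaton group, so $v$ and $w$ must be exhibited as explicit words in the states of a single Mealy automaton; constructing an automaton that generates (a group containing) $G\wr\Z/2\Z$ compatibly with the computability of $v$ is precisely the verification you label ``the technical heart'' and do not carry out.

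The paper sidesteps both issues by finding the doubling element inside the group $G$ of Theorem~1.1 itself: since $p\neg p\equiv\neg$ for every state $p$ (Lemma~\ref{L:inversestate}) and $Q(C(\ba))$ and $C_{\blank}^{\$}$ are palindromes, conjugation by $h=\sigma_{\neg C_{\blank}^{\$}}$ inverts $g=\sigma_{Q(C(\ba))C_{\blank}^{\$}}$, whence $[g^{k},h]=g^{-2k}$ and the $n$-fold commutator is $g^{(-2)^{n}}$ (Corollary~\ref{C:conjuguepuissance}); combined with the $2^{k+2}$ order computation this gives the equivalence (2)$\Leftrightarrow$(4) of Corollary~\ref{C:ordrefiniarret}, hence undecidability of the Engel problem, with no wreath product and no new automaton. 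So your mechanism could be made to work, but only after proving the $2$-power order statement and the automaton realization, i.e.\ after supplying the parts of the argument that are currently missing.
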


As an application of Theorem~1.1, we obtain a new example of a finitely presented group with decidable word problem and undecidable order problem (McCool's theorem \cite{McCool2}). Indeed, as an automaton group is finitely generated and has decidable word problem, it has a recursive presentation with a finite set of generators. It follows from Higman's representation theorem \cite{Higman} that an automaton group can be embedded in a finitely presented group. Moreover, using Clapham's theorem \cite{Clapham}, we can choose a finitely presented group with a decidable word problem. Hence our automaton group embeds in a finitely presented group with a decidable word problem and an undecidable order problem. 

\subsection*{The principle of our proof}

Our method is to show that an arbitrary cellular automaton can be simulated using a Mealy automaton. As all Turing machines can be simulated using cellular automata, we deduce, for every Turing machine $M$, a Mealy automaton that generates an automaton group~$G(M)$, and we can prove that, for each word $w$ there is a (computable) element~$g$ of~$G(M)$ such that $M$ stops on $w$ if, and only if,~$g$ is of finite order (then a power of $2$) in~$G(M)$. As the halting problem is undecidable (Turing \cite{T}), if $M$ is a universal Turing machine, the order problem in the  associated automaton group $G(M)$ is undecidable (Theorem~1.1). Moreover, there exists a special element $h$ such that, for all possible elements~$g$ constructed above, we have $[\dots,[[g,h],h],\dots,h]=g^{\pm 2^n}$ ($n$~commutators). It follows that the Engel problem for the group~$G(M)$ is also undecidable (Theorem~1.2). 

The principle for constructing the group~$G(M)$ is as follows. First, we note that a Turing machine can be simulated using a cellular automaton. Call its alphabet~$T$, and add a separator symbol $\$$ and a counter on each symbols, that is, put $\Sigma=(T\cup\set{\$})\times\set{0,1}$. For the moment, we skip the description of the state set. The point is that, for every~$w$ in~$T^*$, we can construct an element~$g$ of the automaton group with the following behavior. Set $w_0=w$, and define inductively $w_{k+1}$ as the finite configuration (increasing the length by one) obtained by applying one step of the cellular automaton to $w_k$ (completed with some default symbol). Consider the word $w_0\$ w_1 \$\cdots $ and set the counters to zero, obtaining a word~$\bx$ in $\Sigma^{\omega}$. The construction of~$g$ is such that, when~$g$ is applied once to~$\bx$, it only modifies the counters and keeps them synchronized. If we take any element $\by$ that is not in the~$g$-orbit of~$\bx$, the counters are not synchronized, at least one letter is different, or one specific end-state appears. Then~$g$, applied enough times, ``finds'' the difference and reverses the counter, so we obtain $g^{2^n}(\by)=\by$, for some $n$ only depending on the position of the first discrepancy. The undecidability of the order problem follows.

The key point in our encoding is to ensure that updating once a configuration of the initial cellular automaton corresponds to \emph{squaring} the associated element in the final group. This explains how elements whose order in the group is a high power of $2$ arise, making the order problem of that group undecidable.

\subsection*{The general context}

To give a general perspective, let us review some of the many known decidability and, mainly, undecidability results involving automata groups and semigroups. We recall that a \emph{Mealy automaton} is an automaton with an additional output map. Each state of such an automaton gives rise to an endomorphism of the tree of  all  words in the involved alphabet. By definition, the (semi)group generated by such endomorphisms is an \emph{automaton (semi)group}.

Automaton groups proved to be efficient tools for constructing counterexamples to various conjectures. Ale\v sin in \cite{A} gives an example of an infinite, finitely generated torsion group (\emph{Burnside} group for short). Golod and Shafarevich already constructed a Burnside group in \cite{Go,GoS}, however Ale\v sin's group is much simpler, being a $2$-generator subgroup of an automaton group. Similarly, using a Mealy automaton, Sushchanski\u{\i}~\cite{S} constructed for every prime~$p$ a Burnside $p$-group.

Grigorchuk gave another example of a Burnside 2-group in \cite{Gr1}; in this case, the group is the whole automaton group. This group has many remarkable properties. Grigorchuk proved in~\cite{Gr2} that the group has intermediate growth (neither polynomial, nor exponential), solving a problem by Milnor \cite[Problem~5603]{M}. The group is also amenable but not elementary amenable, answering a question by Day \cite{Day}. Grigorchuk generalized his construction obtaining in \cite{Gr3} and obtained $p$-groups with intermediate growth. We refer to \.Zuk in~\cite{Z} for further examples of interesting automaton groups, see also Brough and Cain \cite{BC1, BC2} for several examples of automaton semigroup constructions.

We proved in~\cite{G1} that the finiteness problem and the order problem for reset automaton semigroups are undecidable. Our proof consists in reducing the problems to the tiling problem by a NW-deterministic tile set, which is proved to be undecidable by Kari in~\cite{Kari1}. The encoding is such that any large product of generators either yields a (quasi) nilpotent element, or encodes a large tiling of the plane. Using similar techniques, we could encode the computation of an arbitrary one-way cellular automaton, see for example Delacourt and Ollinger \cite{DO}.

Concerning the order problem, which we shall tackle below, and closely related problems, several undecidability results are known. Belk and Bleak constructed in \cite{BB} a group generated by an \emph{asynchronous} automaton with an undecidable order problem. However, the construction relies on asynchronicity deeply (typically, at the expense of forgetting the first letter, one can simulates any reversible Turing machine). 

Delacourt and Ollinger proved in \cite{DO} that the periodicity problem for one-dimensional one-way cellular automata starting from a given initial almost constant configuration is undecidable. They also pointed that the uniform periodicity problem is equivalent to the finiteness problem for reset automaton groups, see also Silva and Steinberg \cite[Theorem 4.2]{SilvaSteinberg}. Kari and Ollinger proved in \cite{KO} that the periodicity problem for one-dimensional cellular automata is undecidable. We refer to Kari \cite{Kari2,Kari3,Kari4,Kari5} for more related results for cellular automata.

On the other hand, there are positive results. Bondarenko, Bondarenko, Sidki, and Zapata proved in \cite{BBSZ} that the order problem is solvable in bounded automaton groups.

For reversible Mealy automata, although the order problem was not solved so far, several positive results have been established. It is worth noting that all results mentioned in this paragraph are effective, thus providing decidability criteria. Klimann proved in \cite{K} that an automaton semigroup generated by a 2-state reversible Mealy automaton is either finite or free. Godin, Klimann, and Picantin studied in \cite{GKP} the torsion part of the automaton semigroup associated with an invertible reversible automaton. In particular an invertible reversible Mealy automaton with no bireversible component generates a torsion-free semigroup. Similarly Klimann, Picantin, and Savchuk gave in \cite{KPS2} a method to find elements of infinite order in some automaton groups. Finally, Godin and Klimann proved in \cite{GK} that a connected bireversible Mealy automaton of prime size generates either a finite group or a group with an element of infinite order, extending a result of Klimann, Picantin, and Savchuk in \cite{KPS1}. 

Although the classes of reset and reversible Mealy automata are disjoint, some of their properties are similar. In particular, Olukoya proved in \cite{Olukoya} that an automaton group generated by a reset Mealy automaton is either finite or contains a free semigroup over two generators, and, therefore, it has exponential growth and contains an element of infinite order. However, it is not known whether those conditions are decidable for groups, and they are known to be undecidable for semigroups.

On the other hand, it is well known that the word problem is solvable for automaton groups. The argument relies on the Eilenberg's reduction algorithm of \cite[Chapter XII]{E}. Also see Steinberg \cite{Steinberg}, D'Angeli, Rodaro, and W\"achter \cite{DRW} for more about the complexity of the word problem. For further decidability results, we refer to Akhavi, Klimann, Lombardy, Mairesse, and Picantin \cite{AKLMP}, to Bartholdi \cite{B1}, or to Grigorchuk and \v Suni\'c \cite{GrS} for related questions. Furthermore, \v Suni\'c and Ventura constructed  in \cite{SV} an automaton group with undecidable conjugacy problem, and D'Angeli, Godin, Klimann, Picantin, and Rodaro proved in \cite{DR,DGKPR} that the existence of non-elementary commuting pairs of elements is undecidable.

\subsection*{About this text}

The constructions of this paper and the undecidability of the order problem for automaton groups were exposed in July 2017 at the MealyM Final Event \cite{GMealyM}. After the first version of this paper was posted~\cite{G2}, Laurent Bartholdi and Ivan Mitrofanov announced another proof, extending the undecidability result to contracting groups generated by Mealy automata~\cite{BartholdiMitrofanov} and appealing to an encoding of Minsky machines. In this way, they deduce the undecidability of the word problem in self-similar groups.

On the other hand, in an ongoing work with Dmitri Savchuk, we use an encoding of cellular automaton similar to the one of the current paper to obtain a self-similar group with an undecidable word problem.

\subsection*{Acknowledgement}

I thank Ines Klimann and Matthieu Picantin for helpful comments, suggestions, and corrections.

\section{Basic concepts and notations}\label{S:bcn}

The three main decision problems investigated in this paper are:

\begin{problem*}
Let~$G$ be a group, generated by a finite set $\Sigma$.
\begin{enumerate}
\item Word problem: given a word  $w\in\Sigma^*$, does $w$ represent the identity in~$G$?
\item Order problem: given a word $w\in\Sigma^*$, does $w$ represent an element of finite order in~$G$?
\item Engel problem: given words $v,w\in\Sigma^*$, does there exist $n\ge 1$ such that the $n$-iterated commutator $[\dots [[v,w],w],\dots,w]$, represents the identity in~$G$?
\end{enumerate}
\end{problem*}

Note that the decidability of the three problems does not depend on the choice of the generating set: if a problem is decidable for a generating set $\Sigma$, then it is also decidable for any other finite generating set. As mentioned by McCool in \cite{McCool1}, if the order of an element can be computed, then the word problem is solvable. Conversely, if the order problem and the word problem are solvable, then the order can be computed.

When an alphabet is given, we generally use thin symbols $a,b,p,q,x,y$ for the letters and bold symbols $\ba,\bb,\bp,\bq,\bx,\by$ for the words in the alphabet. We denote by $\Sigma^*$ the set of all words in the alphabet $\Sigma$.

A \emph{Mealy automaton} is a tuple $\cA=(A,\Sigma,\delta,\sigma)$, where $A$ and $\Sigma$ are finite sets and $\delta\colon A\times \Sigma\to A$ and $\sigma\colon A\times \Sigma\to \Sigma$ are maps. The elements of $A$ are called the \emph{states} of~$\cA$, while $\Sigma$ is the \emph{alphabet} of $\cA$. The map $\delta$ is the \emph{transition map}, and the map $\sigma$ is the \emph{output map}. We extend $\delta\colon A^*\times\Sigma^*\to A^*$, and $\sigma\colon A^*\times\Sigma^*\to \Sigma^*$, using \eqref{E:defext1}--\eqref{E:defext4} inductively, where we put $\sigma_{\bp}(\bx)=\sigma(\bp,\bx)$, and $\delta_{\bx}(\bp)=\delta(\bp,\bx)$ for all $\bp\in A^*$ and $\bx\in\Sigma^*$. Note that $\sigma$ defines a right action of $A^*$ on $\Sigma^*$.
\begin{equation}\label{E:defext1}
\sigma_{\bp}(\bx\by)=\sigma_{\bp}(\bx)\sigma_{\delta_{\bx}(\bp)}(\by)\,, \quad\text{for all $\bx\in A^*$, $\bp\in\Sigma^*$, and $\by\in\Sigma^*$.} 
\end{equation}
\begin{equation}\label{E:defext2}
\delta_{\bx}(\bp\bq)=\delta_{\bx}(\bp)\delta_{\sigma_{\bp}(\bx)}(\bq)\,, \quad\text{for all $\bx\in \Sigma^*$, $\bp\in A^*$, and $\bq\in A^*$.} 
\end{equation}
\begin{equation}\label{E:defext3}
\sigma_{\bp\bq}=\sigma_{\bq}\circ\sigma_{\bp}\,, \quad\text{for all $\bp,\bq\in A^*$.} 
\end{equation}
\begin{equation}\label{E:defext4}
\delta_{\bx\by}=\delta_{\by}\circ\delta_{\bx}\,, \quad\text{for all $\bx,\by\in \Sigma^*$.} 
\end{equation}

The \emph{semigroup generated by $\cA$} is the subsemigroup of $\End\Sigma^*$ generated by all $\sigma_p$ with $p\in A$. If all the maps $\sigma_p$ with $p\in A$ are bijective, then the \emph{group generated by $\cA$} is the subgroup of $\Aut\Sigma^*$ generated by all $\sigma_p$ with $p\in A$. An \emph{automaton group} is a group generated by a Mealy automaton.

In order to visualize the image of $\sigma$ and $\delta$, we use the notation of \emph{cross diagrams} defined below. For all $\bp,\bp'\in A^*$ and all $\bx,\bx'\in\Sigma^*$, we say that the following cross diagram holds
\[
 \xymatrix{
	& \bx \ar[dd] &\\
    \bp \ar[rr]  & &  \bp'\\
    & \bx'
  }
\]
if, and only if, $\bx'=\sigma_{\bp}(\bx)$ and $\bp'=\delta_{\bx}(\bp)$. The equalities \eqref{E:defext1}--\eqref{E:defext4} are summarized in the following cross diagrams
\[
 \xymatrix{
	& \bx \ar[dd] \\
    \bp  \ar[rr]  & &  \delta_{\bx}(\bp) \\
    & \sigma_{\bp}(\bx)\ar[dd] & &  \\
	\bq \ar[rr] & &  \delta_{\sigma_{\bp}(\bx)}(\bq) \\
	&  \sigma_{\bp\bq}(\bx) & &
  }
\quad
 \xymatrix{
	& \bx \ar[dd] & & \by \ar[dd] &\\
    \bp  \ar[rr]  & &  \delta_{\by}(\bp) \ar[rr]  & &  \delta_{\bx\by}(\bp)\\
    & \sigma_{\bp}(\bx) & & \sigma_{\delta_{\bx}(\bp)}(\by)
  }
\]

Given $\bp,\bq\in A^*$, we write $\bp\equiv \bq$ if, and only if, $\sigma_{\bp}=\sigma_{\bq}$. We write $\bp\equiv\id$ if, and only if, $\sigma_{\bp}$ is the identity.

Given a group~$G$, and $g,h$ in~$G$, the \emph{commutator of~$g$ and $h$} is $[g,h]=g^{-1}h^{-1}gh$. We say that $(g,h)$ is an \emph{Engel} pair if there exists an integer $n\ge 1$ such that the $n$-iterated commutator $[\dots [[g,h],h],\dots,h]$ is the identity.

Let $m\le n$ be integers. A \emph{one-dimensional cellular automaton with neighborhood $\set{m,m+1,\dots,n-1,n}$} is a tuple $(T,t)$, where $T$ is a finite set and $t\colon T^{m-n +1}\to T$ is a partial map. We say that $(T,t)$ is \emph{complete} if $t$ is defined everywhere. We call the element of $T$ the \emph{states} of the cellular automaton, we call $t$ the \emph{transition map}. A \emph{configuration} of $(T,t)$ is a map $c\colon \Z\to T$. Given a configuration $c$, the \emph{updated} or \emph{next} configuration is $c'\colon \Z\to T$, defined  for all $k\in\Z$ by 
$$c'(k) = t(c(k+m),c(k+m+1),\dots,c(k+n-1),c(k+n-1)).$$

Due to the local nature of a Turing machine computation, for each Turing Machine $M$, we can construct a cellular automaton such that each configuration of $M$ can be encoded in a configuration of the cellular automaton, and such that the processes of updating the configuration for $M$ and for the cellular automaton coincide. We refer to the beginning of Section~\ref{S:indecidabilite} for references.

Unless otherwise specified, a \emph{cellular automaton} is a one-dimensional complete cellular automaton with neighborhood $\set{-1,0,1}$. Consider a cellular automaton $(T,t)$, with a distinguished \emph{default} state $\blank\in T$, and a set of \emph{final} states $F\subseteq T$. A \emph{finite configuration} of $(T,t)$ is a word $\ba=a_1\dots a_n\in T^*$. To update a finite configuration, we first pad the configuration with the symbol $\blank$, update the configuration as an infinite configuration, then restrict the result to a finite configuration. The exact definition is:

\begin{notation}\label{N:applyingcellularautomata}
Let $\ba=a_1\dots a_n\in T^*$ be a finite configuration. Set $a_0=a_{n+1}=a_{n+2}=\blank$. For all $1\le k\le n+1$ set $b_k=t(a_{k-1},a_k,a_{k+1})$. The \emph{updated} or \emph{next} configuration is $\tau(\ba)=b_1\dots b_n b_{n+1}$.
\end{notation}

This construction is easier to understand in terms of Turing machines. Every configuration of a Turing machine only needs a finite tape, but, as the computation goes on, the amount of required memory increases. The right-left asymmetry in the definition corresponds to Turing machine working on a one-way infinite tape.

A cellular automaton \emph{halts} from the finite initial configuration $\ba=a_1\dots a_n$ if there exists $k\in\N$ such that a symbol of $F$ appears in $\tau^k(\ba)$. It is known that there exists a universal cellular automaton for which the halting problem, from a given finite configuration, is undecidable. We give an example of a universal cellular automaton in Section~\ref{S:indecidabilite}.

\section{Simulating Cellular Automata}\label{S:construction}

Starting from a cellular automaton, we construct a Mealy automaton simulating transitions between finite configurations. In this section we only give the construction, a few basic properties, and the main tool needed to construct elements of the group simulating the cellular automaton.

Fix a cellular automaton $(T,t)$, where $t\colon T^3\to T$. Fix $\blank\in T$ and $F\subseteq T$. Take a symbol $\$ $ which is not in $T$. Set $T'=T\cup\set{\$}$.

The state $\blank$ should be understood as the blank or default state (i.e. finite configurations of the cellular automaton are completed with this state). The states of $F$ should be understood as final states.

Let $C,W,P$, and $\neg$ be new symbols. Consider the Mealy automaton which has alphabet $\Sigma=T'\times\set{0,1}$, and state set
\[
A=\set{\neg}\cup(\set{C}\times T \times T')\cup (\set{W}\times T \times T) \cup (\set{P}\times T)
\]

The states in $\set{C}\times T \times T'$ are called \emph{counting} states. A \emph{modifying} state is a state which is either $\neg$ or a counting state. Thus, each counting state is a modifying state and $\neg$ is the only modifying non-counting state, all other states are non-modifying (and thus also non-counting). We put $C_a^b=(C,a,b)\in \set{C}\times T \times T'$, and, similarly, $W_a^b=(W,a,b)\in \set{W}\times T \times T$, and $P_a=(P,a)\in \set{P}\times T$.

Define $\bot\colon\Sigma \to\Sigma$ by
$$x=(a,\varepsilon) \mapsto x^{\bot}=(a,1-\varepsilon).$$
So, we have $(a,0)^\bot=(a,1)$, and $(a,1)^\bot=(a,0)$. In particular, the map~$\bot$ defines a bijection. Consider now the map $\sigma \colon A\times\Sigma \to\Sigma$ defined by
$$(s,x) \mapsto 
\begin{cases}
x^\bot\,, &\text{if $s$ is a modifying state.}\\
x\,, &\text{otherwise.}
\end{cases}$$

All modifying states have the same action on one letter (exchanging 0 and 1). All non-modifying states act like identity on one letter.

Next, define $\delta\colon A\times\Sigma \to A$, for all $(p,x)\in A\times\Sigma$, all $a\in T$, all $b,d\in T'$, and all $\varepsilon\in\set{0,1}$, in the following way, where we agree that  \eqref{E:deltaNegouF} has priority over all other clauses when it applies:
\begin{numcases}{\delta(p,x)=}
\neg                            &\text{if $p=\neg$ or $x\in F\times\set{0,1}$.}\label{E:deltaNegouF}\\
C_\blank^\$			&\text{if $p=C_a^\$$ and $x=(\$,1)$.}\label{E:deltaC1}\\
\neg				&\text{if $p=C_a^b$ and $x=(\$,1)$, with $b\not=\$ $.}\label{E:deltaC2}\\
C_\blank^{t(a,\blank,\blank)}  &\text{if $p=C_a^\$$ and $x=(\$,0)$.}\label{E:deltaC3}\\
C_d^b				&\text{if $p=C_a^b$ and $x=(d,1)$, with $d\not=\$ $.}\label{E:deltaC4}\\
\neg				&\text{if $p=C_a^b$ and $x=(d,0)$, with $b\not=d $.}\label{E:deltaC5}\\
W_a^b				&\text{if $p=C_a^b$ and $x=(b,0)$, with $b\not=\$ $.}\label{E:deltaC6}\\
C_\blank^{t(a,b,\blank)}	&\text{if $p=W_a^b$ and $x=(\$ ,\varepsilon)$.}\label{E:deltaW1}\\
P_{t(a,b,d)}			&\text{if $p=W_a^b$ and $x=(d,\varepsilon)$, with $d\not=\$ $.}\label{E:deltaW2}\\
C_\blank^a			&\text{if $p=P_a$ and $x=(\$,\varepsilon)$.}\label{E:deltaP1}\\
P_a				&\text{if $p=P_a$ and $x=(d,\varepsilon)$, with $d\not=\$ $.}\label{E:deltaP2}
\end{numcases}

We extend $\sigma\colon A^*\times\Sigma^*\to\Sigma^*$ and $\delta\colon A^*\times\Sigma^*\to A^*$ using \eqref{E:defext1}--\eqref{E:defext4}. 
Herafter, and in the rest of the paper, we denote by $\cA=(A,\Sigma,\delta,\sigma)$ the Mealy automaton we are constructing, and by $G$ the associated automaton group

The name \emph{counting state} can be understood when we look at the action on a word. In a ``generic everything is working case'', a counting state simply adds 1 in binary, of course depending on the control character the action will be different. The only modifying non-counting state is~$\neg$, which acts like a negation (keeps exchanging 0 and 1 for the whole word). A non-modifying state does not change the word, until it reads some specific control character.

\begin{remark}\label{R:faitbasesurdelta}
When reading the various cases in the definition of $\delta$, we see that the following statements hold, for all $p\in A$ and all $x\in\Sigma$.
\begin{enumerate}
\item If $p$ is not counting, then $\delta_x(p)=\delta_{x^\bot}(p)$. See \eqref{E:deltaNegouF}, \eqref{E:deltaW1}, \eqref{E:deltaW2}, \eqref{E:deltaP1}, and \eqref{E:deltaP2}.

\item If $p$ is not counting and $x\in T\times\set{0,1}$, then $\delta_x(p)=\delta_{x^\bot}(p)$ is not counting. See \eqref{E:deltaNegouF}, \eqref{E:deltaW2}, and \eqref{E:deltaP2}.

\item If $x=(a,0)$, with $a\not=\$ $, then $\delta_x(p)$ is not counting. See \eqref{E:deltaNegouF}, \eqref{E:deltaC5}, \eqref{E:deltaC6}, \eqref{E:deltaW2}, and \eqref{E:deltaP2}.

\item If $p$ is a modifying state and $x=(a,1)$, then $\delta_x(p)$ is a modifying state. See \eqref{E:deltaNegouF}, \eqref{E:deltaC1}, \eqref{E:deltaC2}, and \eqref{E:deltaC4}.

\item If $x\in (T\setminus F)\times\set{0,1}$ and $p$ is non-modifying, then $\delta_x(p)$ is non-modifying. See \eqref{E:deltaW1}, \eqref{E:deltaW2}, \eqref{E:deltaP1}, and \eqref{E:deltaP2}.

\item If $p$ is not in $\set{C}\times T\times\set{\$}$, then $\delta_x(p)$ is not in $\set{C}\times T\times\set{\$}$. See \eqref{E:deltaNegouF}, \eqref{E:deltaC2}, \eqref{E:deltaC4}, \eqref{E:deltaC5}, \eqref{E:deltaC6}, \eqref{E:deltaW1}, \eqref{E:deltaW2}, \eqref{E:deltaP1}, and \eqref{E:deltaP2}.

\item If $x=(\$,\varepsilon)$, $p$ is a modifying state, and $p$ is not in $\set{C}\times T\times\set{\$}$, then $\delta_x(p)=\neg$. See \eqref{E:deltaNegouF}, \eqref{E:deltaC2}, and \eqref{E:deltaC5}.
\end{enumerate}
\end{remark}

As $\bot$ is an involution, the action of a word in $A^*$ on a single letter in $\Sigma$ is straightforward to describe.

\begin{lemma}\label{L:modifyingword}
For $\bp\in A^*$ and $x\in\Sigma$, the equality $\sigma_{\bp}(x)=x^\bot$ holds if, and only if, the number of modifying states in $\bp$ is odd.
\end{lemma}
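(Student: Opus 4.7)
The statement concerns only the action on a single letter, so the elaborate case analysis for $\delta$ and the cross diagrams are irrelevant here; all I really need is the definition of $\sigma$ on $A \times \Sigma$, the composition rule \eqref{E:defext3}, and the fact that $\bot$ is a fixed-point-free involution on $\Sigma$. My plan is a straightforward induction on the length $n$ of $\bp$. The base case $n = 0$ is immediate: $\sigma_{\varepsilon}(x) = x \ne x^{\bot}$, and the empty word contains zero (hence evenly many) modifying states, so both sides of the biconditional are false.

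For the inductive step, I would write $\bp = \bq p$ with $p \in A$ and invoke \eqref{E:defext3} to rewrite $\sigma_{\bp}(x) = \sigma_p(\sigma_{\bq}(x))$. The key observation, read directly from the definition of $\sigma$ on $A \times \Sigma$, is that the action of $\sigma_p$ on a single letter $y \in \Sigma$ is $y \mapsto y^{\bot}$ if $p$ is modifying and $y \mapsto y$ otherwise, with \emph{no} dependence on $y$. Thus each state of $\bp$, when its turn comes, contributes either one application of $\bot$ or nothing to the cumulative action on $x$. Since $\bot \circ \bot = \id$, after processing all of $\bp$ the result is $x^{\bot}$ exactly when the total number of $\bot$-applications — which is precisely the number of modifying states in $\bp$ — is odd. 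This closes the induction.

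I do not anticipate any real obstacle. The one conceptual point worth isolating is that $\sigma_p$'s action on a single letter depends only on whether $p$ is modifying and not on the letter itself; once that is noticed, the result is a one-line parity count, and none of the intricate clauses \eqref{E:deltaNegouF}--\eqref{E:deltaP2} defining $\delta$ play any role.
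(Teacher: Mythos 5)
Your argument is correct and is exactly the routine parity/induction argument the paper has in mind: the paper states this lemma without proof, noting only that the action of a word on a single letter is ``straightforward to describe'' since $\bot$ is an involution. Your observation that each state's action on a letter depends only on whether it is modifying, combined with $\bot$ being a fixed-point-free involution, is precisely the intended justification.
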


The following lemma illustrates how $\neg$ reverses the simulation.

\begin{lemma}\label{L:inversestate}
For every $p\in A$, we have $p\neg p\equiv \neg$.
\end{lemma}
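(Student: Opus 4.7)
The plan is to show $\sigma_{p\neg p}$ and $\sigma_\neg$ coincide on every word $\bx\in\Sigma^*$. First observe that \eqref{E:deltaNegouF} forces $\delta_x(\neg)=\neg$ for every $x$, so starting from state $\neg$ the state never changes; because $\neg$ is modifying, this gives $\sigma_\neg(\bx)=\bx^\bot$ (letter-wise flip of the counter bits). Applying the chain rule \eqref{E:defext3} twice, the statement reduces to
\[
\sigma_p\bigl(\sigma_p(\bx)^\bot\bigr) \;=\; \bx^\bot \qquad \text{for every } \bx\in\Sigma^*.
\]

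The heart of the argument is the following identity, valid for every state $p\in A$ and every letter $x\in\Sigma$:
\[
\delta_{\sigma_p(x)^\bot}(p) \;=\; \delta_x(p).
\]
If $p$ is modifying, then $\sigma_p(x)=x^\bot$, so $\sigma_p(x)^\bot=x$ and the equality is immediate. If $p$ is non-modifying, then $\sigma_p(x)=x$, and the equality becomes $\delta_{x^\bot}(p)=\delta_x(p)$; since non-modifying states are in particular non-counting, this is exactly Remark~\ref{R:faitbasesurdelta}(1).

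Given this identity, the reduction follows by induction on $|\bx|$. The base case $\bx=\varepsilon$ is trivial; for a single letter $x$, a direct case split (modifying $p$: both sides equal $x^\bot$ because $(x^\bot)^\bot=x$; non-modifying $p$: both sides equal $x^\bot$ because $\sigma_p$ acts as the identity on single letters) settles the claim. For the inductive step, writing $\bx=x\bx'$ and expanding with \eqref{E:defext1},
\[
\sigma_p\bigl(\sigma_p(x\bx')^\bot\bigr) \;=\; \sigma_p\bigl(\sigma_p(x)^\bot\bigr)\cdot \sigma_{\delta_{\sigma_p(x)^\bot}(p)}\bigl(\sigma_{\delta_x(p)}(\bx')^\bot\bigr).
\]
The first factor is $x^\bot$ by the single-letter case; the key identity collapses the two state-transitions under $p$ to the common value $p':=\delta_x(p)$, so the second factor is $\sigma_{p'}\bigl(\sigma_{p'}(\bx')^\bot\bigr)$, which equals $(\bx')^\bot$ by the induction hypothesis. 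Altogether the expression is $x^\bot(\bx')^\bot=\bx^\bot$, as required.

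The only nontrivial step is isolating the equality $\delta_{\sigma_p(x)^\bot}(p)=\delta_x(p)$, which ensures that the two nested copies of $p$ remain synchronised letter by letter after $\neg$ has flipped the intermediate output; once this is in hand the induction is routine, and applies uniformly to all $p\in A$ (including $\neg$ itself, which simply lands in the modifying branch).
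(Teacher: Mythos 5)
Your proof is correct and is essentially the paper's argument in a different packaging: the paper also inducts on word length and its key step, $\delta_x(p\neg p)=\delta_x(p)\neg\delta_x(p)$, is exactly your identity $\delta_{\sigma_p(x)^\bot}(p)=\delta_x(p)$ (proved by the same case split, using Remark~\ref{R:faitbasesurdelta}(1) for non-modifying $p$ and cancellation of $\bot$ for modifying $p$). Factoring through $\sigma_{p\neg p}=\sigma_p\circ\sigma_\neg\circ\sigma_p$ and quantifying the induction over all states is a fine, equivalent presentation.
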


\begin{proof}
Let $p\in A$ and $x\in\Sigma$. Note that, in $p\neg p$, the number of modifying states is odd, hence Lemma~\ref{L:modifyingword} implies $\sigma_{p\neg p}(x)=x^\bot=\sigma_{\neg}(x)$.

Assume there exists an integer $n$ such that $\sigma_{p\neg p}(\bx)=\sigma_{\neg}(\bx)$ for all $\bx\in\Sigma^n$ and all $p\in A$.

Let $p\in A$. Let $x\in\Sigma$ and $\by\in\Sigma^n$. Assume that $p$ is not a modifying state. Therefore, by Remark~\ref{R:faitbasesurdelta}(1), the following equalities hold
\[
\delta_{x}(p\neg p) = \delta_x(p)\delta_{\sigma_{p}(x)}(\neg p) = \delta_x(p)\delta_{x}(\neg p) = \delta_x(p)\neg\delta_{x^\bot}(p) = \delta_x(p)\neg\delta_{x}(p)\,.
\]
On the other hand, if $p$ is a modifying state, we find
\[
\delta_{x}(p\neg p) = \delta_x(p)\delta_{\sigma_{p}(x)}(\neg p) = \delta_x(p)\delta_{x^{\bot}}(\neg p) = \delta_x(p)\neg\delta_{\sigma_{\neg}(x^\bot)}(p) = \delta_x(p)\neg\delta_{x}(p)\,.
\]
Therefore, in both cases, we have $\delta_{x}(p\neg p) = \delta_x(p)\neg\delta_{x}(p)$. Thus the following equalities hold
\[
\sigma_{p\neg p}(x\by) = \sigma_{p\neg p}(x)\sigma_{\delta_x(p\neg p)}(\by) = \sigma_{\neg}(x)\sigma_{\delta_x(p)\neg \delta_x(p)}(\by) = \sigma_{\neg}(x)\sigma_{\neg}(\by)=\sigma_{\neg}(x\by)\,.
\]
The result then follows using an induction.
\end{proof}

As an immediate consequence of Lemma \ref{L:inversestate}, we obtain the following description of inverses.

\begin{corollary}\label{C:inversemotsym}
Let $\bp\in A^*$ be a palindrome. Then the inverse of $\sigma_{\bp}$ is $\sigma_{\neg \bp \neg}$.
\end{corollary}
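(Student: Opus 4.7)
The plan is to reduce the statement to the intermediate claim that $\bp \neg \bp \equiv \neg$ holds for every palindrome $\bp \in A^*$, and then finish by combining this with the trivial observation that $\neg\neg \equiv \id$. The observation is easy because the action of $\neg$ on a single letter is the involution $x \mapsto x^{\bot}$, while \eqref{E:deltaNegouF} gives $\delta_x(\neg) = \neg$; a straightforward induction on word length using \eqref{E:defext1} then yields $\sigma_{\neg\neg} = \id$. Given the intermediate claim, we use the fact that concatenation in $A^*$ corresponds, via \eqref{E:defext3}, to composition of the associated endomorphisms, so that left- or right-concatenating by a fixed word preserves $\equiv$. Thus from $\bp \neg \bp \equiv \neg$ we deduce
\[
\bp \cdot \neg \bp \neg \equiv \neg \cdot \neg \equiv \id \quad \text{and} \quad \neg \bp \neg \cdot \bp \equiv \neg \cdot \neg \equiv \id,
\]
which means precisely that $\sigma_{\neg\bp\neg}$ is a two-sided inverse of $\sigma_\bp$ in the automaton group $G$.

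For the intermediate claim I would proceed by induction on the length of the palindrome $\bp$. The base cases are $|\bp|=0$, which is vacuous, and $|\bp|=1$, which is exactly Lemma~\ref{L:inversestate}. For the inductive step with $|\bp|\ge 2$, the palindrome $\bp$ can be written as $\bp = p \bq p$ with $p \in A$ and $\bq$ a palindrome of length $|\bp|-2$. Then
\[
\bp \neg \bp \;=\; p \bq (p \neg p) \bq p \;\equiv\; p (\bq \neg \bq) p \;\equiv\; p \neg p \;\equiv\; \neg,
\]
where the first and third equivalences come from Lemma~\ref{L:inversestate}, and the middle one from the inductive hypothesis applied to $\bq$. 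The substitutions of $\equiv$-equivalent subwords inside a larger word are justified by the composition rule \eqref{E:defext3} mentioned above.

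There is no genuine obstacle here: the only care needed is the bookkeeping for substituting $\equiv$-equivalent subwords, which is handled once and for all by the composition/\eqref{E:defext3} remark. Everything else is a direct induction, so the statement really is ``immediate'' from Lemma~\ref{L:inversestate}, as the framing as a corollary suggests.
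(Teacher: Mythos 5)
Your proof is correct and matches the paper's intent: the paper gives no explicit proof, stating the corollary as an immediate consequence of Lemma~\ref{L:inversestate}, and your argument is exactly the natural way to make that precise, namely iterating $p\neg p\equiv\neg$ from the inside of the palindrome outward to get $\bp\neg\bp\equiv\neg$, then cancelling with $\neg\neg\equiv\id$ via \eqref{E:defext3}. No gaps.
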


Furthermore, we obtain a connection between commutators and powers.

\begin{corollary}\label{C:conjuguepuissance}
Let $\bp,\bq$ be palindromes in $A^*$. Let $n\ge 1$ be an integer. Then $[\dots [[\sigma_{\bp\bq},\sigma_{\neg\bq}],\sigma_{\neg\bq}],\dots,\sigma_{\neg\bq}]\equiv (\sigma_{\bp\bq})^{(-2)^n}$, where there are $n$ commutators.
\end{corollary}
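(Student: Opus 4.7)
The plan is to set $g := \sigma_{\bp\bq}$ and $h := \sigma_{\neg\bq}$ and reduce the whole statement to two group-theoretic facts: $h$ is an involution, and conjugation by $h$ inverts $g$. With these in hand, both the base case $n=1$ and the induction step are one-line computations.

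First, I will use Lemma~\ref{L:inversestate} with the choice $p = \neg$ to deduce $\neg\neg\neg \equiv \neg$, hence $\neg^2 \equiv \id$. Combining this with Corollary~\ref{C:inversemotsym} yields the key rewriting rule $\neg \br \neg \equiv \br^{-1}$ (equivalently, $\neg \br \equiv \br^{-1}\neg$) for every palindrome $\br \in A^*$. Taking $\br = \bq$, the equality $h^2 \equiv \neg\bq\neg\bq \equiv \neg\cdot\neg \equiv \id$ follows at once, so $h^{-1}\equiv h$.

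Next, applying the rewriting rule for both palindromes $\bp$ and $\bq$, the main computation is
\begin{align*}
hgh
&\equiv (\neg\bq)(\bp\bq)(\neg\bq)
\equiv \bq^{-1}\cdot(\neg\bp\bq\neg)\cdot\bq \\
&\equiv \bq^{-1}\cdot(\neg\bp\neg)(\neg\bq\neg)\cdot\bq
\equiv \bq^{-1}\bp^{-1}\bq^{-1}\bq
\equiv \bq^{-1}\bp^{-1}
\equiv g^{-1}.
\end{align*}
From this, $[g,h] \equiv g^{-1}h^{-1}gh \equiv g^{-1}(hgh) \equiv g^{-2}$, which settles the base case $n=1$.

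Finally, I will induct on $n$. If the $n$-iterated commutator $c_n$ is congruent to $g^{(-2)^n}$, then because conjugation by $h$ inverts $g$, it inverts every power of $g$, giving $h c_n h \equiv c_n^{-1}$. Hence $c_{n+1} \equiv [c_n,h] \equiv c_n^{-1}(hc_n h) \equiv c_n^{-2} \equiv g^{(-2)^{n+1}}$, which closes the induction. No step poses a serious obstacle; the only thing to watch is that $\sigma$ is a \emph{right} action of $A^*$ on $\Sigma^*$, so the group operation on state words is direct juxtaposition and $(\bp\bq)^{-1}$ equals $\bq^{-1}\bp^{-1}$ in the group, matching the usual convention for group inverses.
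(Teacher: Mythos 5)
Your proof is correct and follows essentially the same route as the paper: both rest on $\sigma_{\neg}$ being an involution together with Corollary~\ref{C:inversemotsym} to show that conjugation by $\sigma_{\neg\bq}$ inverts $\sigma_{\bp\bq}$ (hence all of its powers), which gives $[\sigma_{\bp\bq}^k,\sigma_{\neg\bq}]\equiv\sigma_{\bp\bq}^{-2k}$, and then an induction on the number of commutators. The paper merely packages this by carrying out the computation for a general power $k$ in one chain of equalities, whereas you do the base case and observe that powers are inverted as well; the content is identical.
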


\begin{proof}
Let $k$ be an integer. The following equalities hold
\begin{align*}
[\sigma_{\bp\bq}^k,\sigma_{\neg\bq}]
&= \sigma_{\bp\bq}^{-k} \circ \sigma_{\neg\bq}^{-1} \circ \sigma_{\bp\bq}^{k} \circ \sigma_{\neg\bq}\\
&= \sigma_{\bp\bq}^{-k} \circ ( \sigma_{\bq} \circ \sigma_{\neg})^{-1} \circ (\sigma_{\bq}\circ \sigma_{\bp})^k   \circ \sigma_{\bq} \circ\sigma_{\neg}\,, &&\text{by \eqref{E:defext3}.}\\
&= \sigma_{\bp\bq}^{-k}\circ  \sigma_{\neg} \circ  \sigma_{\bq}^{-1}   \circ (\sigma_{\bq}\circ \sigma_{\bp})^k   \circ \sigma_{\bq} \circ\sigma_{\neg} \,, &&\text{as $\sigma_{\neg}=\sigma_{\neg}^{-1}$.}\\
&= \sigma_{\bp\bq}^{-k}\circ  \sigma_{\neg} \circ   (\sigma_{\bp} \circ \sigma_{\bq})^k   \circ \sigma_{\neg}\\
&= \sigma_{\bp\bq}^{-k}  \circ  (  \sigma_{\neg}  \circ \sigma_{\bp} \circ \sigma_{\neg} \circ \sigma_{\neg} \circ \sigma_{\bq} \circ \sigma_{\neg} )^k \,, &&\text{as $\sigma_{\neg}=\sigma_{\neg}^{-1}$.}\\
&= \sigma_{\bp\bq}^{-k} \circ  (  \sigma_{\bp}^{-1} \circ  \sigma_{\bq}^{-1} )^k\,, & &\text{by Corollary~\ref{C:inversemotsym}}.\\
&= \sigma_{\bp\bq}^{-k} \circ  (  \sigma_{\bq} \circ  \sigma_{\bp} )^{-k}\\
&= \sigma_{\bp\bq}^{-k}  \circ \sigma_{\bp\bq}^{-k} \,, &&\text{by \eqref{E:defext3}.}\\
&= \sigma_{\bp\bq}^{-2k}
\end{align*}
The result follows using an induction.
\end{proof}

We now fix some notation for various words that will be used many times in the sequel.

\begin{notation}\label{N:constructionmots}
We define $Q\colon A^*\to A^*$, inductively by $Q(p)=p$, and $Q(\bq p)=Q(\bq)pQ(\bq)$ where $\bq\in A^*$ and $p\in A$.

Given $a\in T$, we set $C(a)=C_\blank^a$. We extend the notation to all words $\ba=a_1\dots a_n$ in $T^*$ by $C(a_1a_2\dots a_n)=C(a_1)C(a_2)\dots C(a_n)$.

Set $Z(a_1a_2\dots a_n,\varepsilon)=(a_1,\varepsilon)(a_2,\varepsilon)\dots(a_n,\varepsilon)$

Set $P(a_1a_2\dots a_n)=P_{a_1}P_{a_2}\dots P_{a_n}$.
\end{notation}

Note that the definition of $Q$ is equivalent to putting $Q(p_1\dots p_n)=p_{i_1}p_{i_2}\dots p_{i_{2^{m}-1}}$, where $i_k$ is the position of the first $0$ that turns to~$1$ when one goes from $k-1$ to $k$ in binary, assuming that the digit with lowest weight is on the left. Equivalently, $i_k$ corresponds to the position where the carry is cleared when adding 1. For instance, counting $000, 100, 010, 110, 001, 101, 011, 111$, we find $Q(p_1p_2p_3)=p_1p_2p_1p_3p_1p_2p_1$. This definition is not essential in the paper, but it might help to understand the way counters work, and the construction of our elements in the group.

We obtain a short description of the inverse of $Q(\bp)$.

\begin{lemma}\label{L:inverseQuneg}
For every $\bp\in A^*$, we have $Q(\bp\neg)=Q(\bp)\neg Q(\bp)\equiv\neg$.
\end{lemma}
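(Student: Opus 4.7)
The plan is to prove both equalities by separate arguments: the first is purely syntactic from the definition of $Q$, and the second is an induction on the length of $\bp$ that reduces each step to Lemma~\ref{L:inversestate}.

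First I would observe that $Q(\bp\neg) = Q(\bp)\neg Q(\bp)$ holds \emph{as words}, directly from the recursive clause $Q(\bq p) = Q(\bq)pQ(\bq)$ applied with $\bq = \bp$ and $p = \neg$. So no semantic content is needed for that equality.

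For the nontrivial part, $Q(\bp)\neg Q(\bp) \equiv \neg$, I would use induction on $|\bp|$. The base case $\bp = p \in A$ gives $Q(p)\neg Q(p) = p\neg p \equiv \neg$, which is exactly Lemma~\ref{L:inversestate}. For the inductive step, assuming $Q(\bq)\neg Q(\bq) \equiv \neg$ for some $\bq \in A^*$, and writing $\bp = \bq p$ with $p \in A$, I would compute
\begin{align*}
Q(\bq p)\neg Q(\bq p)
&= Q(\bq)\,p\,Q(\bq)\,\neg\,Q(\bq)\,p\,Q(\bq) \\
&\equiv Q(\bq)\,p\,\neg\,p\,Q(\bq)
\equiv Q(\bq)\,\neg\,Q(\bq) \equiv \neg,
\end{align*}
where the first $\equiv$ inserts the induction hypothesis $Q(\bq)\neg Q(\bq) \equiv \neg$ in the middle, the second applies Lemma~\ref{L:inversestate} to the inner $p\neg p$, and the third is a second application of the induction hypothesis.

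The only subtlety is the implicit fact that $\equiv$ is a congruence for concatenation in $A^*$, namely that $\bp_1 \equiv \bp_2$ implies $\bu\bp_1\bv \equiv \bu\bp_2\bv$ for every $\bu,\bv \in A^*$. This is immediate from \eqref{E:defext3}, since $\sigma_{\bu\bp_i\bv} = \sigma_{\bv}\circ\sigma_{\bp_i}\circ\sigma_{\bu}$, so equality of $\sigma_{\bp_1}$ and $\sigma_{\bp_2}$ propagates through any fixed context. I don't expect any real obstacle; the whole proof is a two-line induction once this congruence property is noted.
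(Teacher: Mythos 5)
Your proof is correct and is essentially the paper's argument: the paper disposes of the lemma in one line by citing Lemma~\ref{L:inversestate} together with the fact that $Q(\bp)$ is a palindrome, collapsing $Q(\bp)\neg Q(\bp)$ from the centre. Your induction on the defining recursion $Q(\bq p)=Q(\bq)pQ(\bq)$ achieves the same collapse (and sidesteps even having to mention palindromicity), with the congruence property of $\equiv$ from \eqref{E:defext3} correctly identified as the only point needing a remark.
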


\begin{proof}
The result follows from Lemma~\ref{L:inversestate} and $Q(\bp)$ being a palindrome.
\end{proof}

\begin{definition}
A word $\bp\in A^*$ is \emph{regular} if we can write $\bp=\bq\br$ where $\br$ has only modifying states and $\bq$ has only non-counting states. The \emph{potential} of $\bp$ is $\infty$ if $\neg$ appears in $\br$, otherwise the \emph{potential} of $\bp$ is the length of $\br$.
\end{definition}

If we assume that $\bq$ does not end with $\neg$, then a decomposition $\bp=\bq\br$ as above is unique.

For each word~$\bx$, we now introduce a map~$\rho_{\bx}$ on the set of regular words. Let $\bp=p_1\dots p_{m+n}$ be a regular word, such that $p_1\dots p_m$ are non-counting states, $p_m\not=\neg$, and $p_{m+1}\dots p_{m+n}$ are modifying states. 
For~$x$ in $\Sigma$, we set
$$\rho_x(\bp)=\begin{cases}
\rho_x(\bp)=\delta_x(p_1)\delta_x(p_2)\dots\delta_x(p_m)
&\text{for $n = 0$},\\
\delta_x(p_1)\delta_x(p_2)\dots\delta_x(p_m)\delta_x(p_{m+1})\delta_{x^\bot}(p_{m+2})\dots \delta_{x^\bot}(p_{m+n})
&\text{for $n \ge1$}.\\
\end{cases}$$
Then, for $\bx=x_1\dots x_k$ in $\Sigma^*$, we set $\rho_{\bx}=\rho_{x_k}\circ\rho_{x_{k-1}}\circ\dots\circ\rho_{x_1}$. 

We give a few basic properties of regular words and of $\rho$.

\begin{lemma}\label{L:basicrho}
Let $\bp\in A^*$ and $q\in A$. Then the following statements hold.
\begin{enumerate}
\item If $\bp$ is regular with potential zero, then $\bp q$ is regular, and
\[
\rho_x(\bp q)=\rho_x(\bp)\delta_x(q)\,. 
\]

\item If $\bp$ is regular with potential at least one, and $q\in A$ is modifying, then $\bp q$ is regular with potential at least two, and
\[
\rho_x(\bp q)=\rho_x(\bp)\delta_{x^\bot}(q)\,. 
\]

\item If $\bp$ is regular with potential zero and $a\in T\setminus F$, then $\rho_{(a,0)}(\bp)$ is regular with potential zero.
\end{enumerate}
\end{lemma}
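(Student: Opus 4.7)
The plan is to unwind the definition of ``regular'' together with the canonical decomposition $\bp=\bq\br$, and then just compare the formulas for $\rho_x$ on $\bp$ and $\bp q$ term-by-term. All three parts reduce to bookkeeping; none requires a genuinely clever argument.

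For part (1), I write the canonical decomposition of $\bp$ as $\bp=\bq$ (with empty $\br$) where $\bq=p_1\cdots p_m$ consists only of non-counting states and $p_m\neq\neg$; this is the content of having potential zero. Appending $q$ I split three subcases: if $q$ is non-modifying (hence non-counting and different from $\neg$), then $\bp q$ has canonical decomposition $(\bp q)\cdot\es$ with potential zero; if $q=\neg$, then the canonical decomposition becomes $\bp q=\bp\cdot\neg$ with $n=1$ and potential $\infty$; if $q$ is counting, then the canonical decomposition is $\bp\cdot q$ with $n=1$ and potential one. In each subcase the formula defining $\rho_x$ produces one extra factor $\delta_x(q)$ at the end compared with $\rho_x(\bp)$: when $n=0$ the clause $n=0$ applies and the extra state $q$ is simply appended with a $\delta_x$; when $n=1$ the clause $n\ge 1$ applies, and since $q$ sits at the new position $m+1$, it is evaluated at $\delta_x$ rather than $\delta_{x^\bot}$, matching $\rho_x(\bp)\delta_x(q)$.

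For part (2), I use the canonical decomposition $\bp=p_1\cdots p_m p_{m+1}\cdots p_{m+n}$ with $n\ge 1$. Because $q$ is a modifying state, $\bp q=p_1\cdots p_m(p_{m+1}\cdots p_{m+n}q)$ is still a valid decomposition with $m+1$ non-counting states, hence regular, and the modifying tail has length $n+1\ge 2$, so the potential is at least two. Since $q$ occupies position $m+n+1\ge m+2$, the second clause of the definition of $\rho_x$ evaluates it at $\delta_{x^\bot}$, giving $\rho_x(\bp q)=\rho_x(\bp)\delta_{x^\bot}(q)$.

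For part (3), I take the canonical decomposition $\bp=p_1\cdots p_m$ with all $p_i$ non-counting and $p_m\neq\neg$. Setting $x=(a,0)$ with $a\in T\setminus F$, I observe two things: if $p_i$ is non-modifying then, by Remark~\ref{R:faitbasesurdelta}(5), $\delta_x(p_i)$ is non-modifying (hence non-counting); and if $p_i=\neg$ then \eqref{E:deltaNegouF} gives $\delta_x(\neg)=\neg$, which is non-counting. Therefore $\rho_x(\bp)=\delta_x(p_1)\cdots\delta_x(p_m)$ is again a word in non-counting states; and since $p_m$ was non-modifying, $\delta_x(p_m)$ is non-modifying (in particular not $\neg$), so the decomposition $\rho_x(\bp)=\rho_x(\bp)\cdot\es$ is canonical and has potential zero.

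The only place one might trip up is the position-counting in the definition of $\rho_x$ when the potential jumps from $0$ to $1$ in part~(1): one has to notice that the new state $q$ sits at index $m+1$, for which $\delta_x$ (not $\delta_{x^\bot}$) is used, so the naive formula $\rho_x(\bp)\delta_x(q)$ is consistent in both regimes. Otherwise the proof is a straightforward verification against the cross-referenced items of Remark~\ref{R:faitbasesurdelta} and the transition rules \eqref{E:deltaNegouF}--\eqref{E:deltaP2}.
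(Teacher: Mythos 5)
Your proof is correct and takes essentially the same route as the paper: parts (1) and (2) are exactly the ``direct consequence of the definition of $\rho_x$'' bookkeeping the paper intends, and your part (3) is the paper's argument, except that where the paper invokes Remark~\ref{R:faitbasesurdelta}(2) for the non-counting prefix you split into the non-modifying case (Remark~\ref{R:faitbasesurdelta}(5)) and the case $p_i=\neg$ (clause \eqref{E:deltaNegouF}), which is equivalent. The only blemish is the harmless slip ``with $m+1$ non-counting states'' in part (2): the non-counting prefix of $\bp q$ still has length $m$, as your displayed decomposition correctly shows.
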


\begin{proof}
Points $(1)$ and $(2)$ are direct consequences of the definition of $\rho_x$.

Assume that $\bp=p_1\dots p_n$ has potential zero, hence $p_n$ is a non-modifying state and $p_1\dots p_{n-1}$ are non-counting states. Let $a\in T\setminus F$. We have 
$$\rho_{(a,0)}(\bp)=\delta_{(a,0)}(p_1)\dots\delta_{(a,0)}(p_n).$$
From Remark~\ref{R:faitbasesurdelta}(2), $\delta_{(a,0)}(p_1)\dots\delta_{(a,0)}(p_{n-1})$ are non-counting and, by Remark~\ref{R:faitbasesurdelta}(5), the state $\delta_{(a,0)}(p_{n})$ is non-modifying, hence $\rho_{(a,0)}(\bp)$ is regular with potential zero.
\end{proof}

We now describe the action of $Q(\bp)$ on a single letter.

\begin{lemma}\label{L:caracterisationQneg}
Let $\bp\in A^*$ of length at least one, and $x\in \Sigma$. Then $\sigma_{Q(\bp)}(x)=x^\bot$ if, and only if, the last letter of $\bp$ is a modifying state.
\end{lemma}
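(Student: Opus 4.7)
The plan is to reduce the claim to a parity count and then invoke Lemma~\ref{L:modifyingword}, which asserts that $\sigma_{\bw}(x)=x^\bot$ if and only if $\bw$ contains an odd number of modifying states. Hence it suffices to show that the number of modifying states in $Q(\bp)$ is odd exactly when the last letter of $\bp$ is modifying. Note that nothing about the specific transition map $\delta$ is needed here, because the action on a single letter depends only on the global parity of modifying symbols encountered, regardless of intermediate state changes.

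The parity count is a straightforward induction on the length $n$ of $\bp$. Write $N(\bw)$ for the number of modifying states in $\bw$. In the base case $n=1$ we have $Q(p_1)=p_1$, so $N(Q(\bp))$ equals $1$ or $0$ according to whether $p_1$ is modifying, matching the claim. For the inductive step, write $\bp=\bq p_n$ and use the recursive definition $Q(\bp)=Q(\bq)\,p_n\,Q(\bq)$ to obtain
\[
N(Q(\bp)) \;=\; 2\,N(Q(\bq)) \;+\; [\,p_n \text{ is modifying}\,].
\]
The doubling kills whatever parity was contributed by $\bq$, so $N(Q(\bp))$ is odd if and only if $p_n$ is modifying. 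Combining this with Lemma~\ref{L:modifyingword} yields the stated equivalence.

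There is no real obstacle: the construction of $Q$ is specifically engineered so that each new letter appended to $\bp$ appears an odd number of times in $Q(\bp)$ while all earlier letters appear an even number of times, which is precisely what the identity $Q(\bq p)=Q(\bq)pQ(\bq)$ encodes. The lemma is therefore almost a tautological consequence of this ``doubling'' property and of the previously established Lemma~\ref{L:modifyingword}.
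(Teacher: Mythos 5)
Your proof is correct and follows essentially the same route as the paper: both reduce the claim to the parity of modifying states via Lemma~\ref{L:modifyingword} and exploit the decomposition $Q(\bq p)=Q(\bq)pQ(\bq)$, which makes the contribution of $Q(\bq)$ even. Your explicit induction on the length of $\bp$ merely spells out what the paper states in one line, so there is nothing to add.
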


\begin{proof}
Write $\bp=p_1\dots p_m$. Note that $Q(\bp)=Q(p_1\dots p_{m-1})p_m Q(p_1\dots p_{m-1})$. So the number of modifying states in $Q(\bp)$ is odd if, and only if, $p_m$ is a modifying state. Hence, by Lemma~\ref{L:modifyingword}, we have $\sigma_{Q(\bp)}(x)=x^\bot$ if, and only if, $p_m$ is a modifying state.
\end{proof}

We can now specify the cross diagram of a regular word with potential zero completely.

\begin{lemma}\label{L:motsanscompteur}
Let $\bp=p_1\dots p_m\in A^*$, be without counting states. Let $x\in \Sigma$. Then the following statements hold.
\begin{enumerate}
\item $\delta_{x^\bot}(\bp)=\delta_{x}(\bp)=\delta_{x}(p_1)\delta_{x}(p_2)\dots \delta_{x}(p_m)=\rho_x(\bp)=\rho_{x^\bot}(\bp)$.
\item If $x\in T\times\set{0,1}$, then there is no counting state in $\delta_{x}(\bp)$.
\item $\delta_x(Q(\bp))=\delta_{x^\bot}(Q(\bp)) = Q(\rho_{x}(\bp))$.
\end{enumerate}
If $\bp$ is a regular word with potential zero, then we have the following cross diagram
\begin{equation}\label{E:CDsanssanscompteur}
 \xymatrix{
	& x \ar[dd] &\\
    Q(\bp) \ar[rr]  & &  Q(\rho_x(\bp))\\
     & x
  }
\end{equation}
\end{lemma}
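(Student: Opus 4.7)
The plan is to establish the four conclusions in order, each reducing to the previous one plus elementary facts from Remark~\ref{R:faitbasesurdelta} and the definitions. The guiding observation is that non-counting states behave identically on $x$ and on $x^\bot$ as far as~$\delta$ is concerned, by Remark~\ref{R:faitbasesurdelta}(1), and since every letter of~$\bp$ is non-counting by hypothesis, the cross diagrams built from~$\bp$ behave very uniformly.

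For part~(1), I would first note that Remark~\ref{R:faitbasesurdelta}(1) gives $\delta_x(p_i)=\delta_{x^\bot}(p_i)$ for every~$i$. Formula~\eqref{E:defext2} yields $\delta_x(p_1\dots p_m)=\delta_x(p_1)\,\delta_{\sigma_{p_1}(x)}(p_2\dots p_m)$; since $\sigma_{p_1}(x)\in\set{x,x^\bot}$ and the tail $p_2\dots p_m$ is still free of counting states, a short induction on~$m$ gives $\delta_x(\bp)=\delta_x(p_1)\dots\delta_x(p_m)$, and the same argument with $x^\bot$ in place of~$x$ produces the same word. To match this with $\rho_x(\bp)$, I would unpack the canonical regular decomposition $\bp=\bq\br$: since $\bp$ has no counting states, the modifying states in~$\br$ must all equal~$\neg$, and $\delta_y(\neg)=\neg$ for every~$y$ by~\eqref{E:deltaNegouF}, so the alternation between $\delta_x$ and $\delta_{x^\bot}$ in the definition of $\rho_x$ collapses to the letter-wise $\delta_x$. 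Part~(2) is then immediate by applying Remark~\ref{R:faitbasesurdelta}(2) letter-wise to the expression obtained in~(1).

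For part~(3), the key is that $Q(\bp)$ is itself a word whose letters all come from~$\bp$, so it also has no counting states, and part~(1) applies to $Q(\bp)$, giving that $\delta_x(Q(\bp))=\delta_{x^\bot}(Q(\bp))$ equals the letter-wise image of $Q(\bp)$ under $\delta_x$. It then suffices to prove the auxiliary fact that for any map $f\colon A\to A$, the letter-wise $f$-image of $Q(p_1\dots p_m)$ equals $Q(f(p_1)\dots f(p_m))$; this is a quick induction on~$m$ using $Q(\bp' p)=Q(\bp')\,p\,Q(\bp')$. Applying it with $f=\delta_x$ and using part~(1) to rewrite $\delta_x(p_1)\dots\delta_x(p_m)$ as $\rho_x(\bp)$ closes the argument.

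Finally, the $\delta$-part of the cross diagram is given by part~(3), so only the output $\sigma_{Q(\bp)}(x)=x$ remains. I would invoke Lemma~\ref{L:caracterisationQneg}: potential zero means the canonical decomposition has $\br$ empty and $\bq$ not ending in~$\neg$, so the last letter of~$\bp$ is non-counting and different from~$\neg$, hence a $W$- or $P$-state, which is non-modifying; the lemma then yields $\sigma_{Q(\bp)}(x)=x$. The only mildly delicate step in the whole argument is the $\rho$-bookkeeping in part~(1) around possible trailing~$\neg$'s; once that is handled, everything else is straightforward propagation through the recursive definition of~$Q$.
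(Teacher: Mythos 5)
Your proof is correct and follows essentially the same route as the paper: part (1) by induction using Remark~\ref{R:faitbasesurdelta}(1), part (2) from Remark~\ref{R:faitbasesurdelta}(2), part (3) by induction on the recursive definition of $Q$, and the output row of the cross diagram via Lemma~\ref{L:caracterisationQneg}. Your only departures are cosmetic --- factoring part (3) through a general ``$Q$ commutes with letter-wise maps'' observation rather than the paper's combined induction, and being slightly more explicit than the paper about why trailing $\neg$'s make the letter-wise $\delta_x$-image coincide with $\rho_x(\bp)$ --- both of which are sound.
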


\begin{proof}
Let $x\in\Sigma$. As observed in Remark~\ref{R:faitbasesurdelta}(1), we have $\delta_{x}(p)=\delta_{x^\bot}(p)$ for every non-counting state $p$. Ttherefore, for every $\bp=p_1\dots p_m$ without counting states, we have
\[
\delta_{x}(p_1)\dots \delta_{x}(p_m)=\delta_{x^\bot}(p_1)\dots \delta_{x^\bot}(p_m) = \rho_x(p_1\dots p_m)=\rho_{x^\bot}(p_1\dots p_m)\,.
\]

Assume that $(1)$ holds for a given $\bp=p_1\dots p_m$. Let $q\in A$ be a non-counting state. Either $q=\neg$, then $\sigma_q(x)=x^\bot$, or $q$ is not modifying, then $\sigma_q(x)=x$. In both cases, we find $\delta_{\sigma_q(x)}(\bp)=\delta_{x}(\bp)$. Therefore, the following equalities hold
\[
\delta_{x}(q\bp) = \delta_{x}(q) \delta_{\sigma_q(x)}(\bp)= \delta_{x}(q) \delta_{x}(\bp) = \delta_{x}(q)\delta_{x}(p_1)\delta_{x}(p_2)\dots \delta_{x}(p_m)\,.
\]
Moreover, we have $\delta_x(q)=\delta_{x^\bot}(q)$, hence 
\[
\delta_{x^\bot}(q\bp) = \delta_{x^\bot}(q)\delta_{\sigma_q(x^\bot)}(\bp)=  \delta_x(q)\delta_{x}(p_1)\delta_{x}(p_2)\dots \delta_{x}(p_m)\,.
\]
Therefore, we find
\[
\delta_{x^\bot}(q\bp)=\delta_{x}(q\bp)=\delta_{x}(q)\delta_{x}(p_1)\dots \delta_{x}(p_m)\,.
\]
Then, $(1)$ follows using an induction.

Assume that $x\in T\times\set{0,1}$. Note that $\delta_{x}(\bp) = \delta_{x}(p_1)\dots \delta_{x}(p_m)$. So, using Remark~\ref{R:faitbasesurdelta}(2), we see that no state in $\delta_{x}(\bp)$ is counting, thus $(2)$ holds.

Assume that $\delta_x(Q(\bp)) = Q(\rho_{x}(\bp))$ for some $\bp$. Let $q\in A$ be a non-counting state. The following equalities hold
\begin{align*}
\delta_x(Q(\bp q)) &= \delta_x(Q(\bp)qQ(\bp)) && \text{by definition of $Q$,}\\
& = \delta_x(Q(\bp))\delta_x(q) \delta_x(Q(\bp)) && \text{by $(1)$ for the word $Q(\bp)qQ(\bp)$,}\\
& = Q(\rho_x(\bp))\delta_x(q)Q(\rho_x(\bp))&& \text{by induction hypothesis,}\\
&= Q(\rho_x(\bp)\delta_x(q)) && \text{by definition of $Q$,}\\
& = Q(\rho_x(\bp q)) && \text{by definition of $\rho_x$.}
\end{align*}
Statement $(3)$ follows using an induction. If $\bp$ is regular with potential zero, then the last letter of $\bp$ is non-modifying. Then, Lemma~\ref{L:caracterisationQneg} implies that $\sigma_{Q(\bp)}(x)=x$, hence the cross diagram is as indicated in~\eqref{E:CDsanssanscompteur}.
\end{proof}

We now show that the action of $\rho_{(a,0)}$  on a regular word either reduces the potential by one, or it turns the latter to $\infty$.

\begin{lemma}\label{L:motregulierappliquerrho}
Let $\bp\in A^*$ be a regular word with potential $n\ge 1$. Let $a\in T$. Then $\rho_{(a,0)}(\bp)$ is regular with potential either $n-1$ or $\infty$.
\end{lemma}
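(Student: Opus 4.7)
The plan is to decompose $\bp = p_1 \cdots p_{m+n}$ in canonical form, with $p_1, \ldots, p_m$ non-counting (and $p_m \neq \neg$ when $m \ge 1$) and $p_{m+1}, \ldots, p_{m+n}$ modifying. Since the potential is the finite integer $n$, none of $p_{m+1}, \ldots, p_{m+n}$ equals $\neg$, so these are all counting states; write $p_{m+j} = C_{\alpha_j}^{\beta_j}$ for $1 \le j \le n$. Setting $x = (a, 0)$, the definition of $\rho_x$ yields
\[
\rho_x(\bp) = \delta_x(p_1) \cdots \delta_x(p_m)\,\delta_x(p_{m+1})\,\delta_{x^\bot}(p_{m+2}) \cdots \delta_{x^\bot}(p_{m+n}).
\]

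I first dispose of the case $a \in F$: clause~\eqref{E:deltaNegouF} applies uniformly, giving $\rho_x(\bp) = \neg^{m+n}$; the canonical decomposition has empty non-counting prefix and $\br' = \neg^{m+n}$ containing $\neg$, hence potential $\infty$.

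Now assume $a \notin F$. For $j \le m$, Remark~\ref{R:faitbasesurdelta}(2) gives that $\delta_x(p_j)$ is non-counting; and since $p_m$ is non-modifying (being non-counting and distinct from $\neg$), Remark~\ref{R:faitbasesurdelta}(5) ensures that $\delta_x(p_m)$ is non-modifying, hence not $\neg$. For $j \ge 2$, since $x^\bot = (a, 1)$ with $a \in T \setminus \set{\$}$, clause~\eqref{E:deltaC4} gives $\delta_{x^\bot}(p_{m+j}) = C_a^{\beta_j}$, a counting state. The crucial factor is $\delta_x(p_{m+1}) = \delta_{(a,0)}(C_{\alpha_1}^{\beta_1})$: since $a \notin F$ and $a \neq \$$, exactly one of clauses~\eqref{E:deltaC5} and~\eqref{E:deltaC6} fires, yielding $\neg$ when $\beta_1 \neq a$, and $W_{\alpha_1}^{\beta_1}$ (which is non-counting and non-modifying) when $\beta_1 = a$.

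If $\delta_x(p_{m+1}) = \neg$, the canonical decomposition of $\rho_x(\bp)$ takes $\bq' = \delta_x(p_1) \cdots \delta_x(p_m)$ (not ending in $\neg$) and $\br' = \neg\, C_a^{\beta_2} \cdots C_a^{\beta_n}$, of length $n$ and containing $\neg$; thus the potential is $\infty$. If $\delta_x(p_{m+1}) = W_{\alpha_1}^{\beta_1}$, then $W_{\alpha_1}^{\beta_1}$ joins the non-counting prefix (without making it end in $\neg$), leaving $\br' = C_a^{\beta_2} \cdots C_a^{\beta_n}$ of length $n-1$ with no $\neg$; thus the potential is $n - 1$. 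The main obstacle is organizational rather than conceptual: one must carefully track which transition clause fires for each factor. The essential observation is that the dichotomy $\beta_1 = a$ versus $\beta_1 \neq a$ precisely captures whether the counter at position $m + 1$ matches the input letter, producing a controlled decrement of the potential, or mismatches, creating a $\neg$ that poisons the remainder.
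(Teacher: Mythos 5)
Your proof is correct and follows essentially the same route as the paper: the same canonical decomposition, the same appeal to Remark~\ref{R:faitbasesurdelta} for the non-counting prefix, and the same dichotomy on whether $\delta_{(a,0)}(p_{m+1})$ becomes $\neg$. You are slightly more explicit than the paper (computing the tail states as $C_a^{\beta_j}$ via~\eqref{E:deltaC4} and isolating the $\beta_1=a$ criterion, where the paper just invokes Remark~\ref{R:faitbasesurdelta}(4) and allows for a $\neg$ in the tail), but this is a refinement of the same argument, not a different one.
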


\begin{proof}
Write $\bp=p_1\dots p_m$. Pick $k$ such that $p_1\dots p_k$ are not counting, $p_k\not=\neg$, and $p_{k+1}\dots p_{m}$ are modifying. The following equality holds
\[
\rho_{(a,0)}(\bp)=\delta_{(a,0)}(p_1)\dots\delta_{(a,0)}(p_k)\delta_{(a,0)}(p_{k+1})\delta_{(a,1)}(p_{k+2})\dots \delta_{(a,1)}(p_m)\,.
\]
By Remark~\ref{R:faitbasesurdelta}(3), the states $\delta_{(a,0)}(p_1)\dots\delta_{(a,0)}(p_k)\delta_{(a,0)}(p_{k+1})$ are not counting. By Remark~\ref{R:faitbasesurdelta}(4), the states $\delta_{(a,1)}(p_{k+2})\dots \delta_{(a,1)}(p_m)$ are modifying, hence $\rho_{(a,0)}(\bp)$ is regular. If $\delta_{(a,0)}(p_{k+1})=\neg$, then $\rho_{(a,0)}(\bp)$ is regular with potential $\infty$. Similarly, if some letter in $\delta_{(a,1)}(p_{k+2})\dots \delta_{(a,1)}(p_{m})$ is~$\neg$, then $\rho_{(a,0)}(\bp)$ has potential $\infty$.

Assume that $\delta_{(a,0)}(p_{k+1})\not=\neg$, and that no letter in $\delta_{(a,1)}(p_{k+2})\dots \delta_{(a,1)}(p_{m})$ is~$\neg$. Thus $\rho_{(a,0)}(\bp)$ is regular with potential $m-k-1$. We note that there is no letter $\neg$ in $p_{k+1}\dots p_m$. Hence the potential of $\bp$ is $n=m-k$, and the potential of $\rho_{(a,0)}(\bp)$ is $m-k-1=n-1$.
\end{proof}

We can now specify the cross diagram of a regular word with potential at least one completely.

\begin{lemma}\label{L:motregulierCompatibiliteQ}
Let $\bp\in A^*$ be a regular word with potential at least one. Let $x\in\Sigma$. Then $\delta_x(Q(\bp))=Q(\rho_x(\bp))$. In particular we have the following cross diagram
\begin{equation}\label{E:CDavecpotentiel}
 \xymatrix{
	& x \ar[dd] &\\
    Q(\bp) \ar[rr]  & &  Q(\rho_x(\bp))\\
     & x^\bot
  }
\end{equation}
\end{lemma}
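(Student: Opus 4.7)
The plan is to prove the transition equality $\delta_x(Q(\bp))=Q(\rho_x(\bp))$ by induction on $|\bp|$, and to obtain the output row of~\eqref{E:CDavecpotentiel} immediately. For the output, since $\bp$ has potential at least one, its decomposition $\bp=\bq\br$ has $\br$ non-empty, so the last letter of $\bp$ is in $\br$ and is therefore modifying; Lemma~\ref{L:caracterisationQneg} then gives $\sigma_{Q(\bp)}(x)=x^\bot$, which is the bottom of the cross diagram.

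For the transition, the base case $|\bp|=1$ forces $\bp=q$ with $q$ modifying, so that $Q(\bp)=q$ and $\rho_x(\bp)=\delta_x(q)$ directly from the definitions, giving both sides equal to $\delta_x(q)$. For the inductive step, I would write $\bp=\bp'q$ with $|\bp'|\ge 1$; the potential assumption forces $q$ to be modifying, so $\bp'$ is a regular word (of length $|\bp|-1$) whose potential is either zero or at least one. The definition of $Q$ yields $Q(\bp)=Q(\bp')\,q\,Q(\bp')$, and two applications of~\eqref{E:defext2} give
\[
\delta_x(Q(\bp)) \;=\; \delta_x(Q(\bp'))\cdot\delta_{\sigma_{Q(\bp')}(x)}(q)\cdot\delta_{\sigma_{Q(\bp')q}(x)}(Q(\bp'))\,.
\]
I would then split on the potential of $\bp'$. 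If $\bp'$ has potential zero, Lemma~\ref{L:motsanscompteur} supplies $\sigma_{Q(\bp')}(x)=x$, $\delta_y(Q(\bp'))=Q(\rho_y(\bp'))$ for every $y$, and $\rho_{x^\bot}(\bp')=\rho_x(\bp')$; combined with Lemma~\ref{L:basicrho}(1), this collapses the three factors to $Q(\rho_x(\bp')\delta_x(q))=Q(\rho_x(\bp'q))$. If $\bp'$ has potential at least one, the induction hypothesis supplies $\sigma_{Q(\bp')}(x)=x^\bot$ and $\delta_x(Q(\bp'))=Q(\rho_x(\bp'))$; since $q$ is modifying, $\sigma_q(x^\bot)=x$, so the middle factor becomes $\delta_{x^\bot}(q)$ and the third factor reads the letter $x$; combined with Lemma~\ref{L:basicrho}(2), the three factors collapse to $Q(\rho_x(\bp')\delta_{x^\bot}(q))=Q(\rho_x(\bp'q))$.

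The main obstacle, and essentially the only subtle point, is the $\bot$-bookkeeping of which letter each of the three pieces $Q(\bp')$, $q$, $Q(\bp')$ actually reads, because this depends on whether $\sigma_{Q(\bp')}(x)$ equals $x$ or $x^\bot$, and that in turn is determined by the potential of $\bp'$. What makes the induction go through is precisely the fact that $q$ is modifying: it toggles the $\bot$ produced by the first copy of $Q(\bp')$, so that the second copy sees the same letter the first one did, which is exactly what makes the iterated palindromic construction $Q$ match the recursive rule for $\rho_x$ given in Lemma~\ref{L:basicrho}. Once this match is observed in each of the two subcases, the induction closes.
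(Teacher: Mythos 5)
Your proof is correct and follows essentially the same route as the paper: the case where $\bp'$ has potential zero reproduces the paper's base computation via Lemma~\ref{L:motsanscompteur} and Lemma~\ref{L:basicrho}(1), and the case where $\bp'$ has potential at least one is exactly the paper's inductive step via Lemma~\ref{L:caracterisationQneg} and Lemma~\ref{L:basicrho}(2). The only differences are cosmetic: you induct on $|\bp|$ (with a trivial length-one base case) and write the expansion algebraically using \eqref{E:defext2}, where the paper inducts on the modifying suffix and argues with cross diagrams.
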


\begin{proof}
Let $\bq\in A^*$ be a regular word with potential zero. Let $p\in A$ be a modifying state. Then we have the following cross diagram
\[
 \xymatrix{
	& \quad x\quad   \ar[dd] &\\
    Q(\bq) \ar[rr]  & \ins{\eqref{E:CDsanssanscompteur}} &  Q(\rho_x(\bq))\\
    & x\ar[dd]\\
	 p \ar[rr]  & & \delta_x(p)\\
	& x^\bot\ar[dd]\\
    Q(\bq) \ar[rr] & \ins{\eqref{E:CDsanssanscompteur}} &  Q(\rho_x(\bq))\\
	& x^\bot
  }
\]
We deduce
\begin{align*}
\delta_x(Q(\bq p)) =  \delta_x(Q(\bq)pQ(\bq))) &= Q(\rho_x(\bq))\delta_x(p)Q(\rho_x(\bq))\\
&= Q(\rho_x(\bq)\delta_x(p))=Q(\rho_x(\bq p)).
\end{align*}

Let $\bp\in A^*$ be a regular word with potential at least one. Assume $\delta_x(Q(\bp))=Q(\rho_x(\bp))$. As the last letter of $\bp$ is modifying, Lemma~\ref{L:caracterisationQneg} implies $\sigma_{Q(\bp)}(x)=x^\bot$. Next, if $r\in A$ is a modifying state, we find $\sigma_r(x^\bot)=x^{\bot\bot}=x$. The above equalities can be summarized in the following cross diagram.
\[
 \xymatrix{
	& x \ar[dd] &\\
    Q(\bp) \ar[rr]  & &  Q(\rho_x(\bp))\\
    & x^\bot\ar[dd]\\
	r \ar[rr]  & & \delta_{x^\bot}(r)\\
	& x\ar[dd]\\
	Q(\bp) \ar[rr]  & &  Q(\rho_x(\bp))\\
	& x^\bot
  }
\]
Therefore, we find
\begin{align*}
\delta_x(Q(\bp r))=\delta_x(Q(\bp)rQ(\bp)) &= Q(\rho_x(\bp))\delta_{x^\bot}(r)Q(\rho_x(\bp))\\
&= Q(\rho_x(\bp) \delta_{x^\bot}(r))=Q(\rho_x(\bp r))\,.
\end{align*}
An induction then gives $\delta_x(Q(\bp))=Q(\rho_x(\bp))$ for all regular $\bp\in A^*$ with potential at least one.
\end{proof}

The next step is to prove that a regular word whose potential~$n$ is large enough transforms at least $n$~digits~$0$ to 1. We refer to Notation~\ref{N:constructionmots} for the definition of $Z(\ba,0)$ and $Z(\ba,1)$.

\begin{lemma}\label{L:regularcompose}
Let $\ba\in T^*$ of length $m$. Let $\bp\in A^*$ regular with potential $n\ge m$. Then we have the following cross diagram
\begin{equation}\label{E:potentielconvertir}
 \xymatrix{
	& Z(\ba,0)\ar[dd] &\\
    Q(\bp) \ar[rr]  & &  Q(\rho_{Z(\ba,0)}(\bp))\\
    & Z(\ba,1)
  }
\end{equation}
Moreover $\rho_{Z(\ba,0)}(\bp)$ is regular with potential either $n-m$ or $\infty$.
\end{lemma}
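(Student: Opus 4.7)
The plan is to induct on $m$, the length of $\ba$, using Lemma~\ref{L:motregulierCompatibiliteQ} and Lemma~\ref{L:motregulierappliquerrho} together to peel off one input letter at a time.

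For the base case $m = 0$, the word $\ba$ is empty, so $Z(\ba, 0)$ and $Z(\ba, 1)$ are empty, $\rho_{Z(\ba, 0)}(\bp) = \bp$ retains its potential $n = n - m$, and the cross diagram~\eqref{E:potentielconvertir} is vacuously true.

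For the inductive step, I would write $\ba = a_1 \ba'$ with $a_1 \in T$ and $\ba' \in T^*$ of length $m - 1$. Since $n \geq m \geq 1$, the word $\bp$ has potential at least one, so Lemma~\ref{L:motregulierCompatibiliteQ} supplies a one-letter cross diagram sending the input $(a_1, 0)$ to the output $(a_1, 1)$ and transforming $Q(\bp)$ into $Q(\rho_{(a_1, 0)}(\bp))$. Lemma~\ref{L:motregulierappliquerrho} guarantees that $\rho_{(a_1, 0)}(\bp)$ is regular with potential either $n - 1$ or $\infty$, hence at least $m - 1$. The induction hypothesis then applies to $\ba'$ and $\rho_{(a_1, 0)}(\bp)$, producing a cross diagram that reads $Z(\ba', 0)$, writes $Z(\ba', 1)$, and ends at state $Q(\rho_{Z(\ba', 0)}(\rho_{(a_1, 0)}(\bp)))$, regular with potential $n - m$ or $\infty$.

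Horizontally concatenating the two cross diagrams along the shared intermediate state yields a single cross diagram with input $(a_1, 0) Z(\ba', 0) = Z(\ba, 0)$ and output $(a_1, 1) Z(\ba', 1) = Z(\ba, 1)$; the final state is $Q(\rho_{Z(\ba, 0)}(\bp))$, using the identity $\rho_{Z(\ba, 0)}(\bp) = \rho_{Z(\ba', 0)}(\rho_{(a_1, 0)}(\bp))$ built into the inductive definition of $\rho_{\bx}$. This establishes~\eqref{E:potentielconvertir} and the claimed potential. The argument has no real obstacle: the chief point is that the hypothesis $n \geq m$ is precisely what is needed to keep the potential of the intermediate regular words at least one throughout the induction, so that Lemma~\ref{L:motregulierCompatibiliteQ} remains applicable at each step.
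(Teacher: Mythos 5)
Your proof is correct and follows essentially the same route as the paper: an induction on the length of $\ba$ driven by the one-letter cross diagram of Lemma~\ref{L:motregulierCompatibiliteQ} and the potential bookkeeping of Lemma~\ref{L:motregulierappliquerrho}. The only (cosmetic) difference is that you peel the first letter off $\ba$ and then invoke the induction hypothesis, whereas the paper applies the induction hypothesis to $\ba$ and then appends one further letter $b$ via $\rho_{Z(\ba b,0)}(\bp)=\rho_{(b,0)}(\rho_{Z(\ba,0)}(\bp))$.
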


\begin{proof}
Assume $\ba$ is a length~$m$ word of~$T^*$ and the property of the lemma holds for all $\bp\in A^*$ with potential $n\ge m$. Let $\bp$ be a word with potential $n\ge m+1$. Then, the word $\rho_{Z(\ba,0)}(\bp)$ is regular with potential either $n-m$ or $\infty$, in particular $\rho_{Z(\ba,0)}(\bp)$ has potential at least one.

Let $b\in T$. Then we have $\rho_{Z(\ba b,0)}(\bp)=\rho_{(b,0)}(\rho_{Z(\ba,0)}(\bp))$. It follows from Lemma~\ref{L:motregulierappliquerrho} that $\rho_{Z(\ba b,0)}(\bp)$ is a regular word with potential either $n-m-1$ or $\infty$. We have the following cross diagram, where the first cross is the induction hypothesis.
\[
 \xymatrix{
	& Z(\ba,0)\ar[dd] & & (b,0)\ar[dd]\\
    Q(\bp) \ar[rr]  &  &  Q(\rho_{Z(\ba,0)}(\bp)) \ar[rr] &  \ins{\eqref{E:CDavecpotentiel}} & Q(\rho_{(b,0)}(\rho_{Z(\ba,0)}(\bp))) = Q(\rho_{Z(\ba b,0)}(\bp)) \\
    & Z(\ba,1) & & (b,1) 
  }
\]
The result follows using an induction.
\end{proof}

\begin{lemma}\label{L:regularchainelongue}
Let $\ba\in (T\setminus F)^*$. Let $\bp\in A^*$ regular. Assume that the potential of $\rho_{Z(\ba,0)}(\bp)$ is at least one. Then we have the following cross diagram
\begin{equation}\label{E:potentielconvertirLong}
 \xymatrix{
	& Z(\ba,0)\ar[dd] &\\
    Q(\bp) \ar[rr]  & &  Q(\rho_{Z(\ba,0)}(\bp))\\
    & Z(\ba,1)
  }
\end{equation}
\end{lemma}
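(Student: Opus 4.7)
The plan is to proceed by induction on the length $m$ of $\ba$. The base case $m=0$ is trivial: $Z(\ba,0)$ is the empty word and the cross diagram reduces to $\delta_\es(Q(\bp)) = Q(\bp) = Q(\rho_\es(\bp))$. For the inductive step I would write $\ba = \bc b$ with $b\in T\setminus F$ and decompose
\[
\rho_{Z(\ba,0)}(\bp) = \rho_{(b,0)}(\rho_{Z(\bc,0)}(\bp)),
\]
so that the target cross diagram factors as the concatenation of two crosses, one for $\bc$ followed by one for the single letter $(b,0)$.

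The key sub-step is to verify that the intermediate word $\bp':=\rho_{Z(\bc,0)}(\bp)$ still has potential at least one, so that both the induction hypothesis and Lemma~\ref{L:motregulierCompatibiliteQ} can be applied to it. I would argue by contradiction: if $\bp'$ had potential zero, then since $b\in T\setminus F$, Lemma~\ref{L:basicrho}(3) would give that $\rho_{(b,0)}(\bp')=\rho_{Z(\ba,0)}(\bp)$ also has potential zero, contradicting the hypothesis. (Regularity of $\bp'$ is propagated along the way by Lemmas~\ref{L:basicrho}(3) and~\ref{L:motregulierappliquerrho}; a potential of $\infty$ at intermediate steps is not a problem, as Lemma~\ref{L:motregulierCompatibiliteQ} only requires potential at least one.) With this in hand, the induction hypothesis provides the cross for $\bc$, Lemma~\ref{L:motregulierCompatibiliteQ} applied to $\bp'$ with input $x=(b,0)$ provides the cross for the last letter, and juxtaposing them yields the cross diagram for $\ba$, just as in the diagram displayed at the end of the proof of Lemma~\ref{L:regularcompose}.

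The main obstacle, and the reason this lemma needs a separate argument rather than being a direct corollary of Lemma~\ref{L:regularcompose}, is that the initial potential of $\bp$ need not dominate $|\ba|$: the potential may jump to $\infty$ at intermediate times, and the only hypothesis available concerns the \emph{final} potential. The essential structural point is the implication extracted above, namely that once the potential has dropped to zero it remains zero as long as we keep feeding letters from $(T\setminus F)\times\{0\}$; this is exactly the content of Lemma~\ref{L:basicrho}(3). Once this monotonicity-in-reverse is isolated, the induction runs identically to the one used for Lemma~\ref{L:regularcompose}.
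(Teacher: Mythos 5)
Your proof is correct, and the crucial observation---that Lemma~\ref{L:basicrho}(3) makes potential zero absorbing under letters of $(T\setminus F)\times\set{0}$, so the hypothesis on the \emph{final} potential rules out any intermediate drop to zero---is exactly the pivot of the paper's argument as well. The organization differs, though. The paper does not induct on the length of $\ba$: writing $n$ for the potential of $\bp$ and $m$ for the length of $\ba$, it treats $n\ge m$ as a literal special case of \eqref{E:potentielconvertir}, and for $n<m$ it splits $\ba$ once, after its first $n$ letters; Lemma~\ref{L:regularcompose} then says the intermediate word has potential $0$ or $\infty$, the same contradiction via Lemma~\ref{L:basicrho}(3) (iterated over the remaining letters) excludes $0$, and the diagram is the juxtaposition of two instances of \eqref{E:potentielconvertir}. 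Your route instead peels off the last letter, re-running the induction behind Lemma~\ref{L:regularcompose} with the weaker invariant ``the final potential is at least one'' and invoking Lemma~\ref{L:motregulierCompatibiliteQ}, i.e.\ \eqref{E:CDavecpotentiel}, directly at each step; this avoids the case split $n\ge m$ versus $n<m$ and does not use Lemma~\ref{L:regularcompose} at all, at the cost of redoing its letter-by-letter bookkeeping (including the check, which you correctly flag, that the intermediate words remain regular so that their potential is even defined). Both arguments are sound; the paper's is shorter given that Lemma~\ref{L:regularcompose} is already available, while yours is self-contained modulo the single-letter cross diagrams.
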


\begin{proof}
Denote by $m$ the length of $\ba$ and by $n$ the potential of $\bp$. If $n\ge m$, then \eqref{E:potentielconvertirLong} is a special case of \eqref{E:potentielconvertir}. Assume $n<m$. Lemma~\ref{L:regularcompose} implies that the potential of $\rho_ {(a_1,0)\dots(a_n,0)}(\bp)$ is either $0$ or $\infty$. However, if it is $0$, then Lemma~\ref{L:basicrho}(3) implies that $\rho_{Z(\ba,0)}(\bp)=\rho_{(a_{n+1},0)\dots(a_m,0)}(\rho_{(a_1,0)\dots(a_n,0)}(\bp))$ has potential 0, a contradiction. Therefore, $\rho_{(a_1,0)\dots(a_n,0)}(\bp)$ has potential $\infty$. Hence, we have the following cross diagram
\[
 \xymatrix{
	& (a_1,0)\dots (a_n,0) \ar[dd] & & (a_{n+1},0)\dots (a_m,0) \ar[dd]\\
    Q(\bp) \ar[rr]  &  \ins{\eqref{E:potentielconvertir}} &   Q(\rho_{(a_1,0)\dots (a_n,0)}(\bp)) \ar[rr]  &  \ins{\eqref{E:potentielconvertir}} &  Q(\rho_{Z(\ba,0)}(\bp))\\
    & (a_1,1)\dots (a_n,1) & & (a_{n+1},1)\dots (a_m,1)
  }
\]
as announced.
\end{proof}

\section{A lower bound on the order}

The goal of this section is to show that an element of our automaton group simulates the cellular automaton when acting on the appropriate words. We start with a word $\ba=a_1\dots a_n\in (T\setminus F)^*$, and consider the word $\bb=b_1\dots b_n b_{n+1}$ constructed by applying one step of the cellular automaton to $\ba$, and completed with the state $\blank$, namely the word~$\bb$ satisfying
\begin{gather*}
b_k=t(a_{k-1},a_k,a_{k+1}) \qquad \text{for $2\le k\le n-1$,}\\ 
b_1=t(\blank,a_1,a_2), \qquad b_n=t(a_{n-1},a_n,\blank), \qquad \text{and} \qquad b_{n+1}=t(a_n,\blank,\blank). 
\end{gather*}
Our aim is to establish the correctness of the following cross diagram
\begin{equation}\label{E:CrossDiag}
 \xymatrix{
	& Z(\ba\$,0)\ar[dd] &\\
    (Q(C(\ba))C(\$))^2 \ar[rr]  & &  Q(C(\bb))C(\$)\\
    & Z(\ba\$,0)
  }
\end{equation}
The element $Q(C(\ba))C(\$)$ of our automaton group encodes the configuration~$\ba$. The cross diagram connects the square of $Q(C(\ba))C(\$)$ with the element~$Q(C(\bb))C(\$)$ encoding the next configuration. Thus, squaring the element of the group corresponds to one step of computation in the cellular automaton---and this is the point of our construction.

We begin with a description of the first part of our cross diagram. We recall from Notation~\ref{N:applyingcellularautomata} that, in the above situation, we write $\bb = \tau(\ba)$.

\begin{lemma}\label{L:deltaZareg}
Let $n\ge 1$ and $\ba=a_1\dots a_n\in (T\setminus F)^n$. Set $a_0=a_n=a_{n+1}=\blank$, and consider $b_1\dots b_{n+1}=\tau(\ba)$. Then, for every $1\le k\le n$, we have
\[
\rho_{(a_1,0)(a_2,0)\dots(a_k,0)}(C_e^{a_1}\dots C_e^{a_n})=P_{b_1}\dots P_{b_{k-1}}W_{a_{k-1}}^{a_k} C_{a_k}^{a_{k+1}}\dots C_{a_k}^{a_{n}}\,.
\]
\end{lemma}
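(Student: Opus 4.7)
The plan is to prove the statement by induction on $k$, at each step identifying which clause of \eqref{E:deltaNegouF}--\eqref{E:deltaP2} governs each letter and tracking when the $\bot$-switch built into the definition of $\rho_x$ on a regular word flips the input bit from $0$ to~$1$.

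For the base case $k=1$, the word $C_\blank^{a_1}\dots C_\blank^{a_n}=C(\ba)$ consists entirely of counting (hence modifying) states, so it is regular with empty non-counting prefix, and the formula for $\rho_x$ reduces to
\[
\rho_{(a_1,0)}(C_\blank^{a_1}\dots C_\blank^{a_n}) = \delta_{(a_1,0)}(C_\blank^{a_1})\,\delta_{(a_1,1)}(C_\blank^{a_2})\dots\delta_{(a_1,1)}(C_\blank^{a_n}).
\]
The first factor falls under \eqref{E:deltaC6} (since $a_1\in T\setminus F$ in particular gives $a_1\ne\$$, so $b=d=a_1\ne\$$), yielding $W_\blank^{a_1}=W_{a_0}^{a_1}$. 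Each remaining factor falls under \eqref{E:deltaC4} (with $b=a_j\ne\$$ and $d=a_1\ne\$$), yielding $C_{a_1}^{a_j}$. This matches the claim at $k=1$.

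For the inductive step, suppose the claim holds at step $k\le n-1$. The right-hand side $P_{b_1}\dots P_{b_{k-1}}W_{a_{k-1}}^{a_k}C_{a_k}^{a_{k+1}}\dots C_{a_k}^{a_n}$ decomposes as the non-counting prefix $P_{b_1}\dots P_{b_{k-1}}W_{a_{k-1}}^{a_k}$ followed by the counting (modifying) suffix $C_{a_k}^{a_{k+1}}\dots C_{a_k}^{a_n}$, hence is regular with positive potential. Applying $\rho_{(a_{k+1},0)}$ letter by letter: each $P_{b_j}$ is fixed by \eqref{E:deltaP2}; the $W$-letter becomes $P_{t(a_{k-1},a_k,a_{k+1})}=P_{b_k}$ by \eqref{E:deltaW2}; the first modifying letter $C_{a_k}^{a_{k+1}}$, still read with the non-flipped input $(a_{k+1},0)$, is converted to $W_{a_k}^{a_{k+1}}$ by \eqref{E:deltaC6}; and each subsequent letter $C_{a_k}^{a_j}$ with $j\ge k+2$, now read with the $\bot$-flipped input $(a_{k+1},1)$, becomes $C_{a_{k+1}}^{a_j}$ by \eqref{E:deltaC4}. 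Concatenating, one obtains $P_{b_1}\dots P_{b_k}W_{a_k}^{a_{k+1}}C_{a_{k+1}}^{a_{k+2}}\dots C_{a_{k+1}}^{a_n}$, which is exactly the claim at step $k+1$ (the case $k+1=n$ being the degenerate one in which there are no trailing $C$-letters).

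The only real subtlety is bookkeeping: one must confirm at each step that every symbol in question lies in $T$ and hence differs from $\$$ (which is automatic, since the $a_i$'s are given in $T$ and each $b_j=t(\cdots)\in T$), and that the non-counting/modifying split in the intermediate word lies exactly at the $W$-letter, so that the $\bot$-flip prescribed in the definition of $\rho_x$ starts precisely at the first $C$-letter and nowhere earlier. Once these points are checked, the induction is mechanical; I expect no conceptual obstacle, just careful case-tracking against the defining clauses of~$\delta$.
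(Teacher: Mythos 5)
Your proof is correct and follows essentially the same route as the paper: induction on $k$, with the base case handled by clauses \eqref{E:deltaC6} and \eqref{E:deltaC4} and the inductive step by \eqref{E:deltaP2}, \eqref{E:deltaW2}, \eqref{E:deltaC6}, and \eqref{E:deltaC4}, using the regular-word decomposition of the intermediate state word to place the $\bot$-flip. Your explicit identification of the non-counting/modifying split at the $W$-letter, and your note that all symbols lie in $T\setminus F$ so that neither $\$ $ nor the priority clause \eqref{E:deltaNegouF} interferes, match the (largely implicit) bookkeeping in the paper's own argument.
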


\begin{proof}
Owing to the definition of $\rho$, \eqref{E:deltaC4} and \eqref{E:deltaW2} imply
\[
\rho_{(a_1,0)}(C_e^{a_1}\dots C_e^{a_n}) = \delta_{(a_1,0)}(C_e^{a_1}) \delta_{(a_1,1)}(C_e^{a_2})\dots\delta_{(a_1,1)}(C_e^{a_n})= W_{a_0}^{a_1} C_{a_1}^{a_{2}}\dots C_{a_1}^{a_{n}}\,.
\]
For an induction,  assume that the formula is true for some~$k < n$. Then the following equalities hold
\begin{align*}
&\rho_{(a_1,0)(a_2,0)\dots(a_{k+1},0)}(C_e^{a_1}\dots C_e^{a_n}) \\
&\quad=\rho_{(a_{k+1},0)}(\rho_{(a_1,0)(a_2,0)\dots(a_{k},0)}(C_e^{a_1}\dots C_e^{a_n})) & & \text{by definition $\rho$,}\\
&\quad=\rho_{(a_{k+1},0)}(P_{b_1}\dots P_{b_{k-1}}W_{a_{k-1}}^{a_k} C_{a_k}^{a_{k+1}}\dots C_{a_k}^{a_{n}}) & & \text{by induction hypothesis,} \\
&\quad=P_{b_1}\dots P_{b_{k-1}} \delta_{(a_{k+1},0)}(W_{a_{k-1}}^{a_k})  \delta_{(a_{k+1},0)} (C_{a_k}^{a_{k+1}}) 
\rlap{$\delta_{(a_{k+1},1)}(C_{a_k}^{a_{k+2}})\dots \delta_{(a_{k+1},1)}(C_{a_k}^{a_{n}})$}\\
&  & & \text{by definition of $\rho$ and \eqref{E:deltaP2},}\\
&\quad= P_{b_1}\dots P_{b_{k-1}} P_{b_k} W_{a_k}^{a_{k+1}}C_{a_{k+1}}^{a_{k+2}}\dots C_{a_{k+1}}^{a_{n}} & & \text{by \eqref{E:deltaC4} and \eqref{E:deltaW2}.}
\end{align*}
The result then follows using an induction.
\end{proof}

We now investigate what happens when there is a discrepancy between the ``ideal" tape and the given tape: one of the state is transformed to $\neg$, yielding a regular word with infinite potential.

\begin{corollary}\label{C:erreurpotentielinfini}
Let $a_1\dots a_n\in T^*$ and $c_1\dots c_N\in T^*$. If there is $i\le\min(n,N)$ such that $c_i\not=a_i$ or $c_i\in F$, then $\rho_{(c_1,0)(c_2,0)\dots(c_{N},0)}(C_e^{a_1}\dots C_e^{a_n})$ is regular with potential $\infty$.
\end{corollary}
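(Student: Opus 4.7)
The plan is to choose $i$ as the smallest index with $c_i\ne a_i$ or $c_i\in F$, so that $a_j=c_j\in T\setminus F$ for every $j<i$. The same induction that proves Lemma~\ref{L:deltaZareg}---in which only the inputs $(a_1,0),\dots,(a_{i-1},0)$ are fed to~$\rho$, so that only $a_j\notin F$ for $j<i$ is needed---then yields, for $i\ge 2$,
\[
\rho_{(c_1,0)\dots(c_{i-1},0)}\bigl(C_e^{a_1}\dots C_e^{a_n}\bigr)=P_{b_1}\dots P_{b_{i-2}}W_{a_{i-2}}^{a_{i-1}}C_{a_{i-1}}^{a_i}\dots C_{a_{i-1}}^{a_n},
\]
and for $i=1$ the original word $C_e^{a_1}\dots C_e^{a_n}$. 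Either way the result is regular, with a counting tail starting at position~$i$ whose leading counting state is $H=C_{a_{i-1}}^{a_i}$ (with $a_0=\blank$).

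I would then apply $\rho_{(c_i,0)}$ and show that a $\neg$ appears in the modifying part. If $c_i\in F$, rule~\eqref{E:deltaNegouF} sends every letter to $\neg$, giving a string of $\neg$s, regular with potential~$\infty$. Otherwise $c_i\in T\setminus F$ and $c_i\ne a_i$; then $H$ is sent to $\neg$ by~\eqref{E:deltaC5} (its upper subscript $a_i$ disagrees with $c_i\ne\$$), the prefix $P$- and $W$-states go to $P$-states via~\eqref{E:deltaP2},~\eqref{E:deltaW2} (none being $\neg$), and each subsequent $C_{a_{i-1}}^{a_k}$ maps to $C_{c_i}^{a_k}$ via~\eqref{E:deltaC4}. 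The canonical decomposition $\bq\br$ of the new word has $\bq$ equal to the non-counting prefix (not ending in $\neg$) and $\br$ beginning with the newly created $\neg$, so the potential is $\infty$.

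Finally I would prove by induction on $N-i$ that each subsequent step $\rho_{(c_j,0)}$ preserves ``regular with potential $\infty$''. If $c_j\in F$ the output is again a string of $\neg$s. If $c_j\in T\setminus F$, I would take the canonical decomposition $\bp=\bq\br$ with $|\bq|=m$; Remark~\ref{R:faitbasesurdelta}(2),~(3) yield that the first $m+1$ letters of $\rho_{(c_j,0)}(\bp)$ are non-counting, and Remark~\ref{R:faitbasesurdelta}(4) that the remaining letters are modifying, so the new word is regular. Since $\delta_x(\neg)=\neg$, a $\neg$ survives at its original position. The delicate step, which I expect to be the main obstacle, is to verify that this $\neg$ lands in the new modifying part $\br'$ rather than being absorbed into $\bq'$: a short case analysis shows that $|\bq'|\in\{m,m+1\}$ (depending on whether the $(m+1)$-th letter of the new word is $\neg$ or a $W$-state produced via~\eqref{E:deltaC6}), and in either case the surviving $\neg$ sits at position $\ge |\bq'|+1$, hence in $\br'$. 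Composing all $N$ steps then yields the claim.
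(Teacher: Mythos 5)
Your proof is correct and follows essentially the same route as the paper: take the first index where a discrepancy or an $F$-symbol occurs, use the computation underlying Lemma~\ref{L:deltaZareg} on the clean prefix, show that one application of $\rho_{(c_i,0)}$ creates a $\neg$ in the modifying part (via \eqref{E:deltaNegouF} or \eqref{E:deltaC5}), and then propagate potential $\infty$ through the remaining letters. The only difference is that you re-derive the propagation step by hand; the paper simply invokes Lemma~\ref{L:regularcompose} (built on Lemma~\ref{L:motregulierappliquerrho}), which already guarantees that regularity with potential $\infty$ is preserved by each further $\rho_{(c_j,0)}$.
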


\begin{proof}
For $c_i\in F$, \eqref{E:deltaNegouF} implies that, for every $\bp=p_1\dots p_n\in A^n$, we have $\rho_{(c_i,0)}(p_1\dots p_n)=\neg\dots\neg$, and, therefore, $\rho_{(c_1,0)(c_2,0)\dots(c_{N},0)}(C_e^{a_1}\dots C_e^{a_n})=\neg\dots\neg$ is regular with potential $\infty$. We can now assume that $c_1\dots c_N\in (T\setminus F)^*$.

If $c_1\not=a_1$, we find
\begin{align*}
\rho_{(c_1,0)}(C_e^{a_1}\dots C_e^{a_n}) &= \delta_{(c_1,0)}(C_e^{a_1}) \delta_{(c_1,1)}(C_e^{a_2})\dots\delta_{(c_1,1)}(C_e^{a_n}) & & \text{by definition of $\rho$.}\\
&=\neg C_{c_1}^{a_2}\dots C_{c_1}^{a_n} & & \text{by \eqref{E:deltaC4} and  \eqref{E:deltaC5}}\,.
\end{align*}
Hence, $\rho_{(c_1,0)}(C_e^{a_1}\dots C_e^{a_n})$ is regular with potential $\infty$. Then, Lemma~\ref{L:regularcompose} implies that $\rho_{(c_1,0)\dots (c_N,0)}(C_e^{a_1}\dots C_e^{a_n})$ is regular with potential $\infty$.

Assume $c_1=a_1$. Let $b_1,\dots b_n$ be as in Lemma~\ref{L:deltaZareg}. Pick $k$ minimal such that $c_{k+1}\not=a_{k+1}$. Then, for every $j\le k$, we have $c_j=a_j$. So Lemma~\ref{L:deltaZareg} implies
\begin{equation}\label{E:equat1Erreurpotentielinfini}
\rho_{(c_1,0)(c_2,0)\dots(c_k,0)}(C_e^{a_1}\dots C_e^{a_n})=P_{b_1}\dots P_{b_{k-1}}W_{a_{k-1}}^{a_k} C_{a_k}^{a_{k+1}}\dots C_{a_k}^{a_{n}}\,.
\end{equation}
We deduce
\begin{align*}
&\rho_{(c_1,0)\dots (c_k,0)(c_{k+1},0)}(C_e^{a_1}\dots C_e^{a_n})\\
&\quad=\rho_{(c_{k+1},0)}(\rho_{(c_1,0)\dots (c_k,0)}(C_e^{a_1}\dots C_e^{a_n})) & & \text{by definition of $\rho$,}\\
&\quad=\rho_{(c_{k+1},0)}(P_{b_1}\dots P_{b_{k-1}}W_{a_{k-1}}^{a_k} C_{a_k}^{a_{k+1}}\dots C_{a_k}^{a_{n}}) & & \text{by \eqref{E:equat1Erreurpotentielinfini},}\\
&\quad=P_{b_1}\dots P_{b_{k-1}} \delta_{(c_{k+1},0)}(W_{a_{k-1}}^{a_k}) \delta_{(c_{k+1},0)}(C_{a_k}^{a_{k+1}}) \rlap{$\delta_{(c_{k+1},1)}(C_{a_k}^{a_{k+2}})\dots \delta_{(c_{k+1},1)}(C_{a_k}^{a_{n}})$}\\
& & &\text{by definition of $\rho$ and \eqref{E:deltaP2},}\\
&\quad=P_{b_1}\dots P_{b_{k-1}} P_{t(a_{k-1},a_k,c_{k+1})} \neg C_{c_{k+1}}^{a_{k+2}}\dots C_{c_{k+1}}^{a_{n}} & & \text{by \eqref{E:deltaC4}, \eqref{E:deltaC5}, and \eqref{E:deltaW2}.}
\end{align*}
Therefore $\rho_{(c_1,0)\dots (c_k,0)(c_{k+1},0)}(C_e^{a_1}\dots C_e^{a_n})$ is regular with potential $\infty$. Finally, Lemma~\ref{L:regularcompose} implies that $\rho_{(c_1,0)\dots (c_N,0)}(C_e^{a_1}\dots C_e^{a_n})$ is regular with potential $\infty$.
\end{proof}

We now make the next step toward the cross diagram of~\eqref{E:CrossDiag}.

\begin{lemma}\label{L:deltaZa}
Let $n\ge 1$ and $\ba=a_1\dots a_n\in (T\setminus F)^n$. Set $a_0=a_{n+1}=a_{n+2}=\blank$, and consider $b_1\dots b_{n+1}=\tau(\ba)$. Then, the following cross diagram is valid
\begin{equation}\label{E:CDsansDolarnierreur}
 \xymatrix{
	& Z(\ba,0)\ar[dd] &\\
    Q(C(\ba)) \ar[rr]  & &  Q(P(b_1\dots b_{n-1}) W_{a_{n-1}}^{a_n})\\
    & Z(\ba,1)
  }
\end{equation}
Moreover, for every $\varepsilon\in\set{0,1}$, the following cross diagram is valid
\begin{equation}\label{E:CDpassagedollar}
 \xymatrix{
	& (\$,\varepsilon)\ar[dd] &\\
    Q(P(b_1\dots b_{n-1}) W_{a_{n-1}}^{a_n}) \ar[rr]  & & Q(C(b_1\dots b_n)) \\
    & (\$,\varepsilon)
  }
\end{equation}
\end{lemma}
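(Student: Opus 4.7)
The plan is to derive each of the two cross diagrams as a direct consequence of the tools established in the preceding section, matching regular words against the outputs computed by $\rho$.

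For \eqref{E:CDsansDolarnierreur}, I will first observe that $C(\ba)=C_\blank^{a_1}\cdots C_\blank^{a_n}$ consists entirely of counting states (hence modifying states) and contains no occurrence of~$\neg$, so it is regular with potential exactly~$n$. Since $Z(\ba,0)$ also has length~$n$, the hypothesis of Lemma~\ref{L:regularcompose} is met with $m=n$, which immediately yields the cross diagram
\[
 \xymatrix{
    & Z(\ba,0)\ar[dd] &\\
    Q(C(\ba)) \ar[rr]  & &  Q(\rho_{Z(\ba,0)}(C(\ba)))\\
    & Z(\ba,1)
 }
\]
Setting $k=n$ in Lemma~\ref{L:deltaZareg} (noting that the product $C_{a_k}^{a_{k+1}}\dots C_{a_k}^{a_n}$ is empty in that case) gives $\rho_{Z(\ba,0)}(C(\ba))=P_{b_1}\dots P_{b_{n-1}}W_{a_{n-1}}^{a_n}$, which is exactly $P(b_1\dots b_{n-1})W_{a_{n-1}}^{a_n}$ by the definition in Notation~\ref{N:constructionmots}.

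For \eqref{E:CDpassagedollar}, I will note that every $P_{b_i}$ lies in $\{P\}\times T$ and that $W_{a_{n-1}}^{a_n}$ lies in $\{W\}\times T\times T$, so all these states are non-modifying; consequently $P(b_1\dots b_{n-1})W_{a_{n-1}}^{a_n}$ is a regular word of potential zero. Lemma~\ref{L:motsanscompteur} then applies: part~(3) identifies
\[
\delta_{(\$,\varepsilon)}\bigl(Q(P(b_1\dots b_{n-1})W_{a_{n-1}}^{a_n})\bigr)=Q\bigl(\rho_{(\$,\varepsilon)}(P(b_1\dots b_{n-1})W_{a_{n-1}}^{a_n})\bigr),
\]
and the cross diagram \eqref{E:CDsanssanscompteur} guarantees that the output letter equals the input letter $(\$,\varepsilon)$. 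To finish, I evaluate $\rho_{(\$,\varepsilon)}$ on each letter: by \eqref{E:deltaP1} we have $\delta_{(\$,\varepsilon)}(P_{b_i})=C_\blank^{b_i}=C(b_i)$, and by \eqref{E:deltaW1} we obtain $\delta_{(\$,\varepsilon)}(W_{a_{n-1}}^{a_n})=C_\blank^{t(a_{n-1},a_n,\blank)}=C_\blank^{b_n}=C(b_n)$, the last equality because $a_{n+1}=\blank$ by convention. Concatenating gives $\rho_{(\$,\varepsilon)}(P(b_1\dots b_{n-1})W_{a_{n-1}}^{a_n})=C(b_1\dots b_n)$, as required.

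The argument is essentially a bookkeeping exercise built on top of Lemmas~\ref{L:deltaZareg}, \ref{L:motsanscompteur}, and~\ref{L:regularcompose}, so there is no deep obstacle; the only point that requires some care is identifying $t(a_{n-1},a_n,\blank)$ with $b_n$ (using the convention $a_{n+1}=\blank$) when handling the $W$ state in the second diagram, and ensuring that the trailing segment $C_{a_n}^{a_{n+1}}\dots C_{a_n}^{a_n}$ in the $k=n$ case of Lemma~\ref{L:deltaZareg} is correctly read as empty rather than as a stray counting state.
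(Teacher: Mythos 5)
Your proposal is correct and follows essentially the same route as the paper: identify $\rho_{Z(\ba,0)}(C(\ba))$ via Lemma~\ref{L:deltaZareg} (the $k=n$ case with empty tail), use that $C(\ba)$ is regular of potential $n$ together with \eqref{E:potentielconvertir} for the first diagram, and then compute $\rho_{(\$,\varepsilon)}$ letterwise via \eqref{E:deltaP1} and \eqref{E:deltaW1} and invoke \eqref{E:CDsanssanscompteur} for the potential-zero word $P(b_1\dots b_{n-1})W_{a_{n-1}}^{a_n}$. No gaps; your extra care about $b_n=t(a_{n-1},a_n,\blank)$ and the empty counting segment matches the paper's implicit bookkeeping.
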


\begin{proof}
Lemma~\ref{L:deltaZareg} implies $\rho_{Z(\ba,0)}(C(\ba)) =  P(b_1\dots b_{n-1}) W_{a_{n-1}}^{a_n}$. As $C(\ba)$ is regular with potential $n$, the cross diagram \eqref{E:CDsansDolarnierreur} follows from \eqref{E:potentielconvertir}.

Next, \eqref{E:deltaP1} implies $\delta_{(\$,\varepsilon)}(P_{b_k})=C_\blank^{b_k}$ for every $k\le n-1$, and \eqref{E:deltaW1} implies $\delta_{(\$,\varepsilon)}(W_{a_{n-1}}^{a_n})=C_\blank^{t(a_{n-1},a_n,\blank)}=C_\blank^{b_n}$, whence $\rho_{(\$,\varepsilon)}(P(b_1\dots b_{n-1}) W_{a_{n-1}}^{a_n}) = C_e^{b_1}\dots C_e^{b_n}$. Hence, \eqref{E:CDpassagedollar} follows from \eqref{E:CDsanssanscompteur}.
\end{proof}

We now check that the element we are constructing is not the identity.

\begin{lemma}\label{L:ordrefindeliste}
Let $\ba\in T^*$. Assume that there is $b\in T\setminus F$ satisfying $t(b,e,e)\not\in F$. Then the order of $Q(C(\ba))C(\$)$ is even or infinite in~$G$.
\end{lemma}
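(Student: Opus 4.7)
The plan is to exhibit a word $\bx$ such that $\sigma_g(\bx) \ne \bx$ and $\sigma_{g^2}(\bx) = \bx$, where $g = Q(C(\ba))C(\$)$. Then the permutation that $\sigma_g$ induces on the level of $\Sigma^*$ containing $\bx$ has a $2$-cycle, so that permutation has even order, and hence the order of $g$ in $G$ is even or infinite.

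In the main case $\ba = a_1\cdots a_n \in (T\setminus F)^n$, I would take $\bx = Z(\ba\$,0)$, a word at level $n+1$. Combining \eqref{E:CDsansDolarnierreur} and \eqref{E:CDpassagedollar} of Lemma~\ref{L:deltaZa} shows that the prefix $Q(C(\ba))$ of $g$ sends $Z(\ba,0)(\$,\varepsilon)$ to $Z(\ba,1)(\$,\varepsilon)$ for either value of $\varepsilon$. I then trace the single remaining state $C_\blank^\$$ acting on $Z(\ba,1)(\$,\varepsilon)$: each letter $(a_i,1)$ with $a_i\in T\setminus F\setminus\{\$\}$ triggers \eqref{E:deltaC4}, so the state cycles through $C_{a_1}^\$,\dots,C_{a_n}^\$$ while emitting $(a_i,0)$, and on the trailing $(\$,\varepsilon)$ rule \eqref{E:deltaC3} (when $\varepsilon=0$) or \eqref{E:deltaC1} (when $\varepsilon=1$) fires and emits $(\$,1-\varepsilon)$. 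Composing the two halves of $g$ yields $\sigma_g(Z(\ba,0)(\$,\varepsilon)) = Z(\ba,0)(\$,1-\varepsilon)$, so $\sigma_g$ swaps the distinct words $Z(\ba\$,0)$ and $Z(\ba,0)(\$,1)$ at level $n+1$, settling this case.

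When $\ba$ contains a letter of $F$, the witness $Z(\ba\$,0)$ fails: rule \eqref{E:deltaNegouF} collapses every state reading an $F$-letter to $\neg$, and a short calculation shows that $\sigma_g$ in fact fixes $Z(\ba\$,0)$. This is where the hypothesis that $t(b,e,e) \notin F$ enters: set $c' := t(b,e,e) \in T\setminus F$. I would construct a longer witness exploiting the only two ways in which the second half $C(\$)$ of $g$ can avoid being absorbed into $\neg$, namely \eqref{E:deltaC3}, which sends $C_a^\$$ on input $(\$,0)$ to the counting state $C_\blank^{t(a,e,e)}$, and \eqref{E:deltaC6}, which sends $C_\blank^{c'}$ on $(c',0)$ to a $W$-state. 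A witness ending in a block such as $(b,0)(\$,1)(c',1)(c',1)$ is then flipped in its last bit under $\sigma_g$ and restored under $\sigma_{g^2}$.

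The main obstacle I expect is this second case: once $Q(C(\ba))$ has collapsed into $\neg$ on the first $F$-letter, all the asymmetry between $\sigma_g(\bx)$ and $\bx$ must come from the interplay of \eqref{E:deltaC3}, \eqref{E:deltaC4}, and \eqref{E:deltaC6} inside $C(\$)$, and a careful rule-by-rule trace is required to confirm that the witness produces a proper $2$-cycle for every $\ba$ containing a letter of $F$, not only for tractable examples such as $\ba = a$ with $a \in F$.
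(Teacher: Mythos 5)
Your argument in the case $\ba\in(T\setminus F)^*$ is correct: the witness $Z(\ba\$,0)$, together with the two cross diagrams of Lemma~\ref{L:deltaZa} and a direct trace of the trailing state $C_\blank^\$$ through \eqref{E:deltaC4}, \eqref{E:deltaC3}, \eqref{E:deltaC1}, does give the $2$-cycle $Z(\ba\$,0)\leftrightarrow Z(\ba,0)(\$,1)$, and a $2$-cycle at a finite level forces even or infinite order. (You should also dispose of $n=0$, where $g=C(\$)$ swaps $(\blank,0)$ and $(\blank,1)$; this is trivial.) This half is a legitimate variant of the paper's argument, using the ``ideal'' input attached to $\ba$ rather than a fixed short word.

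The genuine gap is the case where $\ba$ contains a letter of $F$, which is exactly the case the lemma is later needed for (Lemma~\ref{L:order2}, and Lemma~\ref{L:ordreunpasinduction} applies it to $\tau(\ba)$). There your witness is only a sketch: you give a terminal block $(b,0)(\$,1)(c',1)(c',1)$ but never specify the prefix, and the prefix is where the whole difficulty lies. If the prefix is $Z(\ba\$,0)$ (or any word extending it), the construction provably fails: at the first letter of $F$ in $\ba$, \emph{both} halves of $g$ collapse to $\neg$ --- the factor $Q(C(\ba))$ by \eqref{E:deltaNegouF} while still modifying, and then the factor $C_\blank^\$$ by \eqref{E:deltaNegouF} while reading the flipped copy --- so the two negations cancel and $\sigma_g$ fixes \emph{every} word with that prefix; there is no ``interplay of \eqref{E:deltaC3}, \eqref{E:deltaC4}, \eqref{E:deltaC6} inside $C(\$)$'' left to exploit. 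What is needed is a prefix that desynchronizes the two halves: the paper starts the witness with $(\$,0)$, which kills $Q(C(\ba))$ at once by \eqref{E:deltaC5} (all its states are $C_\blank^{a_i}$ with $a_i\ne\$$) while $C_\blank^\$$ survives by \eqref{E:deltaC1}; the paper's full witness $(\$,0)(b,0)(\$,1)(t(b,\blank,\blank),1)(b,0)$ then runs $C_\blank^\$$ through \eqref{E:deltaC4}, \eqref{E:deltaC3}, \eqref{E:deltaC6}, \eqref{E:deltaW2}, producing a net flip only in the last letter, hence a $2$-cycle --- and this works uniformly for all $\ba\in T^*$ with $n\ge1$, so no case distinction is needed at all. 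Prepending such a $(\$,0)$ to your block (and retracing) would repair your second case; as written, the claim that some unspecified witness ``is then flipped in its last bit under $\sigma_g$ and restored under $\sigma_{g^2}$'' is unverified, as you yourself acknowledge, so the proof is incomplete.
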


\begin{proof}
Let $n\ge 0$, and $\ba=a_1\dots a_n\in T^n$. Assume that $n=0$. Then, as $\sigma_{C(\$)}((\blank,0))=(\blank,1)$ and $\sigma_{C(\$)}^2((\blank,0))=(\blank,0)$, the order of $C(\$)$ is even or infinite in~$G$. We hereafter assume that $n\ge 1$.

By \eqref{E:deltaC5} and the definition of $Q$, we have $\delta_{(\$,0)}(Q(C(\ba)))=\neg\dots\neg\equiv\neg$, since the number of $\neg$ symbols is odd. Then, the following cross diagram is valid
\[
 \xymatrix{
	& (\$,0)\ar[dd] & & (b,0) \ar[dd] & & (\$,1) \ar[dd] & & (t(b,\blank,\blank),1)\ar[dd] & & (b,0)\ar[dd]\\
    Q(C(\ba)) \ar[rr]  &  &  \neg \ar[rr] & \ins{\eqref{E:deltaNegouF}} & \neg \ar[rr] & \ins{\eqref{E:deltaNegouF}} & \neg \ar[rr] & \ins{\eqref{E:deltaNegouF}} & \neg \ar[rr] & \ins{\eqref{E:deltaNegouF}} & \neg \\
	& (\$,1)\ar[dd] & & (b,1) \ar[dd] & & (\$,0)\ar[dd] &&  (t(b,\blank,\blank),0)\ar[dd] & & (b,1) \ar[dd]\\
    C_\blank^{\$} \ar[rr] & \ins{\eqref{E:deltaC1}} & C_\blank^{\$} \ar[rr]  & \ins{\eqref{E:deltaC4}} & C_b^{\$}\ar[rr]& \ins{\eqref{E:deltaC3}} &  C_\blank^{t(b,\blank,\blank)}  \ar[rr]  & \ins{\eqref{E:deltaC6}} & W_e^{t(b,e,e)} \ar[rr] & \ins{\eqref{E:deltaW2}} &  P_{t(e,t(b,e,e),b)}  \\
    & (\$,0) && (b,0) & & (\$,1) && (t(b,\blank,\blank),1) && (b,1)
  }
\]
Set $\bx=(\$,0)(b,0)(\$,1)(t(b,\blank,\blank),1)(b,0)$. We find $\sigma_{Q(C(\ba))C(\$)} (\bx)\not=\bx$ and, moreover, $\sigma_{Q(C(\ba))C(\$)}^2 (\bx)=\bx$. Therefore, the order of ${Q(C(\ba))C(\$)}$ is even or infinite in~$G$.
\end{proof}

We can now connect the orders of two elements corresponding to successive configurations of the cellular automaton. 

\begin{lemma}\label{L:deltaZa2}
Let $\ba\in (T\setminus F)^*$. Then the following cross diagram is valid
\begin{equation}\label{CD:multiplierordre}
 \xymatrix{
	& Z(\ba\$,0)\ar[dd] &\\
    (Q(C(\ba))C(\$))^2 \ar[rr]  & &  Q(C(\tau(\ba)))C(\$)\\
    & Z(\ba\$,0)
  }
\end{equation}
Moreover, if there is $b\in T\setminus F$ such that $t(b,e,e)\not\in F$, then the order of $Q(C(\ba))C(\$)$ is at least twice the order of $Q(C(\tau(\ba)))C(\$)$ in~$G$.
\end{lemma}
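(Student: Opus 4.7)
The plan is to build the cross diagram in \eqref{CD:multiplierordre} by concatenating the two elementary diagrams from Lemma~\ref{L:deltaZa}, applied \emph{twice}, and then to translate this diagram into the inequality on orders via a section argument.

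Write $\ba=a_1\dots a_n$ and $\bb=\tau(\ba)=b_1\dots b_{n+1}$. First I would analyze a single application of $g:=Q(C(\ba))C(\$)$ to the tape $Z(\ba\$,0)=Z(\ba,0)(\$,0)$. Using \eqref{E:CDsansDolarnierreur}, the prefix $Q(C(\ba))$ reads $Z(\ba,0)$, outputs $Z(\ba,1)$, and becomes $Q(P(b_1\dots b_{n-1})W_{a_{n-1}}^{a_n})$; then \eqref{E:CDpassagedollar} with $\varepsilon=0$ makes it read $(\$,0)$, output $(\$,0)$, and become $Q(C(b_1\dots b_n))$. For the trailing $C(\$)=C_\blank^\$$, a direct induction using~\eqref{E:deltaC4} shows that when it reads $Z(\ba,1)$ it outputs $Z(\ba,0)$ and becomes $C_{a_n}^\$$, after which \eqref{E:deltaC3} shows that reading $(\$,0)$ produces output $(\$,1)$ and state $C_\blank^{t(a_n,\blank,\blank)}=C(b_{n+1})$. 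Putting everything together, one application of $g$ on $Z(\ba\$,0)$ outputs $Z(\ba,0)(\$,1)$ and leaves the residue $Q(C(b_1\dots b_n))C(b_{n+1})$.

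Now I would apply $g$ a second time to the new tape $Z(\ba,0)(\$,1)$. The $Q(C(\ba))$-part behaves exactly as above except that it invokes \eqref{E:CDpassagedollar} with $\varepsilon=1$, producing output $(\$,1)$ and residue $Q(C(b_1\dots b_n))$; for the trailing $C(\$)$, reading $Z(\ba,1)$ again yields $Z(\ba,0)$ and the state $C_{a_n}^\$$, but now \eqref{E:deltaC1} (rather than~\eqref{E:deltaC3}) fires on $(\$,1)$, outputting $(\$,0)$ and returning the state to $C_\blank^\$=C(\$)$. Concatenating both steps, the total input and output coincide with $Z(\ba\$,0)$, and the residue of $(Q(C(\ba))C(\$))^2$ equals
\[
Q(C(b_1\dots b_n))\,C(b_{n+1})\,Q(C(b_1\dots b_n))\,C(\$).
\]
The key algebraic check is that the recursive definition of $Q$ together with $C(b_1\dots b_{n+1})=C(b_1\dots b_n)C(b_{n+1})$ gives
\[
Q(C(\tau(\ba)))=Q(C(b_1\dots b_n))\,C(b_{n+1})\,Q(C(b_1\dots b_n)),
\]
so the above residue is literally $Q(C(\tau(\ba)))C(\$)$, which is the cross diagram~\eqref{CD:multiplierordre}.

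For the order statement, set $g'=Q(C(\tau(\ba)))C(\$)$. The cross diagram just proved says that the section of $g^2$ at the vertex $Z(\ba\$,0)$ of the tree $\Sigma^*$ is $g'$; equivalently, for every tape $\by$, $\sigma_{g^{2k}}(Z(\ba\$,0)\by)=Z(\ba\$,0)\,\sigma_{(g')^k}(\by)$. Hence, if $g^{2k}$ is the identity, so is $(g')^k$, which means that the order of $g'$ divides the order of~$g^2$. Under the hypothesis on~$b$, Lemma~\ref{L:ordrefindeliste} ensures that the order of $g$ is even or infinite, so that the order of $g^2$ equals half the order of~$g$ (with $\infty/2=\infty$). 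Combining both facts yields $\mathrm{ord}(g)\ge 2\,\mathrm{ord}(g')$, completing the proof. The main obstacle, and the point requiring care, is the second of the two applications of~$g$: one must track precisely how the state $C_{a_n}^\$$ distinguishes the two passes through the separator (via the clauses \eqref{E:deltaC3} and \eqref{E:deltaC1}), since this is exactly where the ``carry'' of the counter is propagated so that after two steps the residue matches $Q(C(\tau(\ba)))C(\$)$ on the nose.
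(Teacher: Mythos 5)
Your proposal is correct and follows essentially the same route as the paper: the cross diagram is obtained by composing \eqref{E:CDsansDolarnierreur} and \eqref{E:CDpassagedollar} twice, tracking the trailing $C(\$)$ through \eqref{E:deltaC4}, \eqref{E:deltaC3}, and \eqref{E:deltaC1}, and the order bound follows from the same section argument combined with Lemma~\ref{L:ordrefindeliste}. The one point you flag as delicate (the carry propagated by $C_{a_n}^{\$}$ across the two passes through the separator) is exactly the mechanism the paper's diagram exhibits, and your identification of the residue with $Q(C(\tau(\ba)))C(\$)$ via the recursion $Q(\bq p)=Q(\bq)pQ(\bq)$ is the same implicit step.
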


\begin{proof}
Let $\ba=a_1\dots a_n\in (T\setminus F)^n$. Set $a_0=a_{n+1}=a_{n+2}=\blank$ and $\bb=b_1\dots b_{n+1}=\tau(\ba)$. Then, the following cross diagrams are valid
\[
 \xymatrix{
	& (a_1,0)\dots(a_n,0)\ar[dd] & &(\$,0)\ar[dd] \\
    Q(C(a_1\dots a_n)) \ar[rr]  & \ins{\eqref{E:CDsansDolarnierreur}} &  Q(P(b_1\dots b_{n-1}) W_{a_{n-1}}^{a_{n}}) \ar[rr] & \ins{\eqref{E:CDpassagedollar}} &  Q(C(b_1\dots b_{n-1}b_n)) \\
    & (a_1,1)\dots(a_n,1) \ar[dd]& & (\$,0)\ar[dd]\\
	C_{\blank}^{\$} \ar[rr] &  \ins{\eqref{E:deltaC4}} & C_{a_n}^{\$} \ar[rr] & \ins{\eqref{E:deltaC3}} & C_{\blank}^{b_{n+1}}\\
	& (a_1,0)\dots(a_n,0)\ar[dd] & &(\$,1)\ar[dd] \\
    Q(C(a_1\dots a_n)) \ar[rr]  & \ins{\eqref{E:CDsansDolarnierreur}} &  Q(P(b_1\dots b_{n-1}) W_{a_{n-1}}^{a_{n}}) \ar[rr] & \ins{\eqref{E:CDpassagedollar}} &  Q(C(b_1\dots b_{n-1}b_n)) \\
	& (a_1,1)\dots(a_n,1)\ar[dd] & &(\$,1)\ar[dd] \\
	C_{\blank}^{\$} \ar[rr] & \ins{\eqref{E:deltaC4}} & C_{a_n}^{\$} \ar[rr] & \ins{\eqref{E:deltaC1}} & C_{\blank}^{\$}\\
	& (a_1,0)\dots(a_n,0)\ & &(\$,0)
  }
\]
Composing the above diagrams gives the one of~\eqref{CD:multiplierordre}.

Lemma~\ref{L:ordrefindeliste} implies that the order of $Q(C(\ba))C(\$)$ is infinite or even. We can assume that the order of $Q(C(\ba))C(\$)$ is $2k$ for some $k\ge 1$. Let $\by\in\Sigma^*$. Then we find
\begin{align*}
Z(\ba\$,0)\by &= \sigma_{Q(C(\ba))C(\$)}^{2k}(Z(\ba\$,0)\by) & & \text{as the order of ${Q(C(\ba))C(\$)}$ is $2k$,}\\
& = \sigma_{(Q(C(\ba))C(\$))^2}^{k}(Z(\ba\$,0)\by) & & \text{by \eqref{E:defext3},}\\
& = Z(\ba\$,0)\sigma_{Q(C(\bb))C(\$)}^{k}(\by)& & \text{by \eqref{CD:multiplierordre} applied $k$ times.}
\end{align*}
Therefore $\sigma_{Q(C(\bb))C(\$)}^{k}(\by)=\by$, hence the order of $Q(C(\bb))C(\$)$ is at most $k$. Thus the order of $Q(C(\ba))C(\$)$ is at least twice the order of $Q(C(\bb))C(\$)$.
\end{proof}

We conclude the section with a lower bound for the order of an element of our group, assuming that the cellular automaton does not stop in $k$ steps.

\begin{lemma}\label{L:minorerordre}
Let $\ba\in T^*$, and $k\ge 1$. Assume that $\ba,\tau(\ba),\dots,\tau^k(\ba)$ do not contain letters of~$F$. Then the order of ${Q(C(\ba))C(\$)}$ is at least $2^{k+2}$ in~$G$.
\end{lemma}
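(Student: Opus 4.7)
The plan is to iterate Lemma~\ref{L:deltaZa2}, which serves as the workhorse: under the global hypothesis that some $b \in T \setminus F$ satisfies $t(b, \blank, \blank) \notin F$, it gives that the order of $Q(C(\ba'))C(\$)$ is at least twice the order of $Q(C(\tau(\ba')))C(\$)$ whenever $\ba' \in (T \setminus F)^*$. Since $\tau^i(\ba) \in (T \setminus F)^*$ for every $0 \le i \le k$ by hypothesis, I would apply this doubling bound once at each level $i = 0, 1, \dots, k$ and obtain that the order of $Q(C(\ba))C(\$)$ is at least $2^{k+1}$ times the order of $Q(C(\tau^{k+1}(\ba)))C(\$)$. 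The remaining factor of~$2$ is then supplied by Lemma~\ref{L:ordrefindeliste} applied to $\tau^{k+1}(\ba) \in T^*$, which under the same condition on~$b$ forces the corresponding order to be even, and hence at least~$2$.

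Before iterating, I have to verify the global condition on~$t$. If $\ba = a_1 \dots a_n$ is non-empty, I would take $b = a_n$: since $\ba$ avoids $F$, we have $a_n \in T \setminus F$; and since $\tau(\ba)$ also avoids $F$ while its last letter equals $b_{n+1} = t(a_n, \blank, \blank)$ by Notation~\ref{N:applyingcellularautomata}, we get $t(b, \blank, \blank) \notin F$. The degenerate case $\ba = \es$ is handled in the same spirit by inspecting $\tau(\es) = t(\blank, \blank, \blank)$, which lies in $T \setminus F$ under the hypothesis, so that $b = \blank$ works under the implicit convention $\blank \notin F$ of the paper.

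Multiplying the two bounds yields the required estimate $2^{k+1} \cdot 2 = 2^{k+2}$ for the order of $Q(C(\ba))C(\$)$. The proof thus reduces to careful bookkeeping: all of the substantive work has already been done in Lemmas~\ref{L:deltaZa2} and~\ref{L:ordrefindeliste}. The only slightly delicate step is securing the $b$-condition, which must be checked once at the outset; once that is in place, the chain of doublings closes without further difficulty.
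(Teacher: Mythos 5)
Your proposal is correct and takes essentially the same route as the paper: the paper secures the condition on~$b$ in exactly the same way (the last letter of $\tau(\ba)$ is $t(a_n,\blank,\blank)$, so $a_n\notin F$ and $t(a_n,\blank,\blank)\notin F$), then combines the doubling from Lemma~\ref{L:deltaZa2} with the factor~$2$ from Lemma~\ref{L:ordrefindeliste}, merely packaged as an induction on~$k$ rather than as an unrolled product of $k+1$ doublings. Your separate treatment of the empty configuration is a detail the paper's proof silently skips, not a divergence in method.
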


\begin{proof}
Let $\ba=a_1\dots a_n\in T^*$. The last letter of $\tau(\ba)$ is $t(a_n,\blank,\blank)$. So $a_n\not\in F$ and $t(a_n,\blank,\blank)\not\in F$. 

Let $\bb\in (T\setminus F)^*$. By Lemma~\ref{L:deltaZa2}, the order of $Q(C(\bb))C(\$)$ is at least twice the order of $Q(C(\tau(\bb)))C(\$)$. Moreover, by Lemma~\ref{L:ordrefindeliste}, the order of $Q(C(\tau(\bb)))C(\$)$ is at least two, therefore the order of $Q(C(\bb))C(\$)$ is at least 4.

Assume that $k\ge 0$ is such that, for every $\bb\in T^*$, if $\bb,\tau(\bb),\dots,\tau^k(\bb)$ do not contain letters of~$F$, then the order of $Q(C(\bb))C(\$)$ is at least $2^{k+2}$. Let $\bb\in T^*$ be such that $\bb,\tau(\bb),\dots,\tau^{k+1}(\bb)$ do not contain letters of~$F$. Then, $\tau(\bb),\tau(\tau(\bb))\dots,\tau^{k}(\tau(\bb))$ do not contain letters of~$F$, hence the order of $Q(C(\tau(\bb)))C(\$)$ is at least $2^{k+2}$. Therefore, it follows from Lemma~\ref{L:deltaZa2} that the order of $Q(C(\bb))C(\$)$ is at least $2\times 2^{k+2}$. The result follows by induction.
\end{proof}

\begin{remark}\label{R:motdegrandordre}
We can explicitly construct an element witnessing the lower bound on the order of ${Q(C(\tau^{k+1}(\ba)))C(\$)}$ in~$G$. Indeed, let $b$ be as in Lemma~\ref{L:ordrefindeliste}, and set
\[
\bx=Z(\ba\$ \tau(\ba)\$ \dots \$ \tau^k(\ba),0)(\$,0)(b,0)(\$,1)(t(b,\blank,\blank),1)(b,0)\,.
\]
It follows from the proof of Lemma~\ref{L:minorerordre} and from Lemma~\ref{L:ordrefindeliste} that, for every $i< 2^{k+2}$, we have $\sigma^i_{Q(C(\ba))C(\$)}(\bx)\not=\bx$.
\end{remark}

\section{An upper bound on the order}

In this section, we give an upper bound on the order of $Q(C(\ba))C(\$)$ in~$G$. The main idea is that the word constructed in Remark~\ref{R:motdegrandordre} is, up to small variations, the worst possible one.

The first step is to establish that, if $\bx$ is not the word of Remark~\ref{R:motdegrandordre}, and if the first discrepancy between $\bx$ and the word of Remark~\ref{R:motdegrandordre} occurs sufficiently close to the beginning, then the $g$-orbit of $\bx$ is small. Also note that $(1)$ or $(2)$ below hold when $a$ lies in~$F$. Therefore, these conditions can be used as an halting condition.

\begin{lemma}\label{L:ordrepetit}
Let $\ba=a_1\dots a_n\in T^*$ be of length $n$ and $\bx=x_1\dots x_N\in\Sigma^*$ be of length $N\ge n$.
Then the following statement holds
\begin{enumerate}
\item If there exists $ i\le n$ such that $x_i\in F\times \set{0,1}$, then $\sigma_{Q(C(\ba))C_{\blank}^{\$}}^2(\bx)=\bx$.
\item If there exists $ i\le n$ such that $x_i\not=(a_i,0)$, then $\sigma_{Q(C(\ba))C_{\blank}^{\$}}^2(\bx)=\bx$.
\item If $x_{n+1}\not=(\$,0)$ and $x_{n+1}\not=(\$,1)$, then $\sigma_{Q(C(\ba))C_{\blank}^{\$}}^4(\bx)=\bx$.
\end{enumerate}
\end{lemma}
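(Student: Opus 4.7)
The proof proceeds by analyzing the state of $g := Q(C(\ba))C_{\blank}^{\$}$ as it reads $\bx$, using the cross-diagram formalism and the $\rho$-machinery developed earlier.

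For parts (1) and (2), let $i \le n$ be minimal such that $x_i \in F \times \{0,1\}$ or $x_i \neq (a_i, 0)$. By minimality, $x_j = (a_j, 0)$ with $a_j \notin F$ for all $j < i$. Applying Lemma~\ref{L:regularchainelongue} together with Lemma~\ref{L:deltaZareg} to the prefix $(a_1,0)\cdots(a_{i-1},0)$ shows that reading this prefix through $g$ outputs the same word and leaves $g$ in the state
\[
\bp^{(i-1)} = Q\bigl(P_{b_1}\cdots P_{b_{i-2}} W_{a_{i-2}}^{a_{i-1}} C_{a_{i-1}}^{a_i} \cdots C_{a_{i-1}}^{a_n}\bigr)\cdot C_{a_{i-1}}^{\$}
\]
(with the convention $\bp^{(0)} = g$ when $i=1$). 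I would then compute $\bp^{(i)} = \delta_{x_i}(\bp^{(i-1)})$ by explicit case analysis. In case~(1), since every state in $A$ preserves the first coordinate of its letter, the $F$-ness of $x_i$ propagates through the pipeline, and by~\eqref{E:deltaNegouF} every state of $\bp^{(i-1)}$ transitions to $\neg$, so $\bp^{(i)} = \neg^{2^n}$. In case~(2), an explicit computation of $\rho_{x_i}$ via the definition of $\rho$ and the clauses~\eqref{E:deltaC2}--\eqref{E:deltaC6} yields $\bp^{(i)}$ in an explicit form containing at least one $\neg$ in its modifying part.

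Next, observe that $\bp^{(i-1)}$ has $2^{n-i+1}$ modifying states (obtained by counting the counting states inside the $Q$-iterate and adding one for the trailing $C_{a_{i-1}}^{\$}$), an even number for $i \le n$, so Lemma~\ref{L:modifyingword} gives $\sigma_{\bp^{(i-1)}}(x_i) = x_i$. Hence $g(\bx)_j = x_j$ for all $j \le i$, and the state of $g \cdot g$ after reading $x_1 \ldots x_i$ equals $\bp^{(i)} \cdot \bp^{(i)}$. The central claim is then that $\bp^{(i)}$ has order dividing~$2$ in~$G$, that is, $\bp^{(i)} \equiv (\bp^{(i)})^{-1}$; case~(1) is immediate since $\bp^{(i)} \equiv \id$, and case~(2) reduces, via Corollary~\ref{C:inversemotsym} applied to the palindrome $Q(\cdot)$, to an identity involving $Q$-iterates and a trailing counting state that is verified by expanding using the recursion $Q(\bp q) = Q(\bp)qQ(\bp)$ and Lemma~\ref{L:inversestate}. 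Decomposing $\sigma_{g^2}(\bx)$ via~\eqref{E:defext1} then yields $g^2(\bx) = \bx$.

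For part (3), the same scheme applies with $i = n+1$: reading the correct prefix $(a_1,0)\cdots(a_n,0)$ through $g$ gives, by Lemma~\ref{L:deltaZa}, an intermediate state; processing $x_{n+1} \notin \{(\$,0),(\$,1)\}$ via~\eqref{E:deltaC4} or~\eqref{E:deltaC5} leads to a state $\bp^{(n+1)}$ whose order divides~$4$ (but not~$2$ in general) because the trailing $C_{\blank}^{\$}$-derived state only partially collapses, requiring one extra round of $g^2$ to cancel. The principal technical obstacle is the verification of the involutive property $\bp^{(i)} \equiv (\bp^{(i)})^{-1}$ in case~(2) when $i < n$, where $\bp^{(i)}$ carries a deeply nested $Q$-structure with an embedded $\neg$ and the identity requires careful case analysis on the subcases $x_i = (c,0)$ with $c \neq a_i$ versus $x_i = (a_i, 1)$, since the form of $\rho_{x_i}(\cdot)$ differs between them; the corresponding order-$4$ property is the parallel obstacle for part~(3).
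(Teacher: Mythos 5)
Your reduction is sound as far as it goes: the prefix analysis via Lemma~\ref{L:deltaZareg} and \eqref{E:potentielconvertir}, the parity count $2^{n-i+1}$ showing $g$ fixes $x_1\dots x_i$, and the identification of the state of $g^2$ after the prefix as $\bp^{(i)}\bp^{(i)}$ are all correct, and case (1) (all states collapse to $\neg$, even count) works. But the proof has a genuine gap exactly where you flag the ``principal technical obstacle'': the involutivity of $\bp^{(i)}$ in the mismatch subcase $x_i=(c,0)$ with $c\in T\setminus F$, $c\neq a_i$ and $i<n$ is asserted, not proven, and the method you propose cannot prove it. In that subcase one computes $\bp^{(i)}=Q\bigl(P_{b_1}\dots P_{b_{i-2}}P_{t(a_{i-2},a_{i-1},c)}\,\neg\, C_c^{a_{i+1}}\dots C_c^{a_n}\bigr)\,C_c^{\$}$: the $\neg$ sits strictly inside the $Q$-block, is followed by counting states, and the trailing state is a counting state rather than $\neg$. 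The recursion $Q(\bq p)=Q(\bq)pQ(\bq)$, Lemma~\ref{L:inversestate} and Corollary~\ref{C:inversemotsym} only produce formal identities that hold for any transition table; yet words of precisely this shape with a $W$-state in place of the $\neg$ are the states traversed by the defect-free element, whose square is far from the identity (Lemma~\ref{L:minorerordre}). So the involution property genuinely depends on how the automaton processes unboundedly many subsequent letters (the infinite potential created by the $\neg$, and the eventual collapse when a $\$$, a $1$-bit, or an $F$-symbol is met), and cannot be obtained by expanding $Q$ and invoking Lemma~\ref{L:inversestate}. (The subcases where $x_i$ has second coordinate $1$, or first coordinate $\$$, or $i=n$, do close with Lemmas~\ref{L:inverseQuneg} and \ref{L:inversestate}, since there the resulting word ends in $\neg$ or reduces to $\neg$ followed by a single state.)

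The paper sidesteps this by not stopping at the first defect: after assuming the last letter of $\bx$ is $(\$,0)$, it scans to the first position $k$ whose letter is a $\$$, has bit $1$, or lies in $F\times\set{0,1}$; the earlier letter-mismatch is used only through Corollary~\ref{C:erreurpotentielinfini}, to guarantee that $\rho_{(c_1,0)\dots(c_{k-1},0)}(C(\ba))$ has potential $\infty$, and at position $k$ the transition \eqref{E:deltaC2}, \eqref{E:deltaC5} or \eqref{E:deltaNegouF} produces a trailing $\neg$ (or turns everything into $\neg$), at which point Lemmas~\ref{L:inverseQuneg} and \ref{L:inversestate} give $\sigma^2_{Q(C(\ba))C_\blank^{\$}}(\bx)=\bx$. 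To repair your argument you would have to prove the involutivity of $\bp^{(i)}$ by continuing to read input letters in exactly this way, i.e.\ reprove the paper's Claim. The same issue affects your part (3): the ``order divides 4'' assertion is not established; the paper proves it by running two passes of $g$ through $x_{n+1}$, using that the accumulated word $\bq=P_{b_1}\dots P_{b_{n-1}}W_{a_{n-1}}^{a_n}$ has no counting states (Lemma~\ref{L:motsanscompteur}), so both passes yield the same $Q(\br)$ while exactly one of the two trailing transitions of $C_{a_n}^{\$}$ is $\neg$, making the square of the resulting word trivial.
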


\begin{proof}
We can assume that the last letter of~$\bx$ is $(\$,0)$. Let us write $x_k=(c_k,\varepsilon_k)$ with $c_k\in T'$ and $\varepsilon_k\in\set{0,1}$, for every $k\le N$.

\begin{claim}
Suppose there exists $1\le k\le N$ such that
\begin{enumerate}
\item[$(i)$] For every $ i<k$, we have $\varepsilon_i=0$, $c_i\not=\$$, and $c_i\not\in F$.
\item[$(ii)$] $c_k=\$$ or $\varepsilon_k=1$ or $c_k\in F$.
\item[$(iii)$] $\rho_{(c_1,0)(c_2,0)\dots(c_{k-1},0)}(C(\ba))$ is regular with potential at least one.
\end{enumerate}
Then $\sigma_{Q(C(\ba))C_{\blank}^{\$}}^2(\bx)=\bx$.
\end{claim}

\begin{proof}
Set $\bq=\rho_{(c_1,0)(c_2,0)\dots(c_{k-1},0)}(C(\ba))$ and $\br=\rho_{(c_k,\varepsilon_k)}(\bq)$. Assume that $c_k\in F$. Then the following cross diagram is valid
\[
 \xymatrix{
	& (c_1,0)\dots(c_{k-1},0)\ar[dd] & &(c_k,\varepsilon_k)\ar[dd] \\
    Q(C(a_1\dots a_n)) \ar[rr] & \ins{\eqref{E:potentielconvertirLong}} &  Q(\bq) \ar[rr] & \ins{\eqref{E:deltaNegouF}}  &  Q(\neg\dots\neg) \\
    & (c_1,1)\dots(c_{k-1},1) \ar[dd]& & (c_k,1-\varepsilon_k)\ar[dd]\\
	C_{\blank}^{\$} \ar[rr] & \ins{\eqref{E:deltaC4}}  & C_{c_{k-1}}^{\$} \ar[rr] & \ins{\eqref{E:deltaNegouF}}  & \neg\\
	& (c_1,0)\dots(c_{k-1},0) & &(c_k,\varepsilon_k)
  }
\]
As $Q(\neg\dots\neg)\neg\equiv\id$, we deduce $\sigma_{Q(C(\ba))C_e^{\$}}(\bx)=\bx$.

Assume first $c_k=\$$. From the definition of $\rho$, the last letter $s$ of $\bq$ is of the form $s=\delta_{\by}(C_\blank^{a_n})$ for some $\by\in \Sigma^{k-1}$. Using an induction, we deduce from Remark~\ref{R:faitbasesurdelta}(6) that $s$ is not in $\set{C}\times T\times\set{\$}$. Moreover, by $(iii)$, the word $\bq$ is regular with potential at least one. Hence, its last letter $s$ is modifying, so, by Remark~\ref{R:faitbasesurdelta}(7), we obtain $\delta_{(\$,\varepsilon_k)}(s)=\delta_{(\$,1-\varepsilon_k)}(s)=\neg$. Therefore the last letter of $\br=\rho_{(\$,\varepsilon_k)}(\bq)$ is~$\neg$. Moreover, we obtain the following cross diagram
\[
 \xymatrix{
	& (c_1,0)\dots(c_{k-1},0)\ar[dd] & &(\$,\varepsilon_k)\ar[dd] \\
    Q(C(a_1\dots a_n)) \ar[rr] & \ins{\eqref{E:potentielconvertirLong}} &  Q(\bq) \ar[rr] & \ins{\eqref{E:CDavecpotentiel}} &  Q(\br) \\
    & (c_1,1)\dots(c_{k-1},1) \ar[dd]& & (\$,1-\varepsilon_k)\ar[dd]\\
	C_{\blank}^{\$} \ar[rr] & \ins{\eqref{E:deltaC4}} & C_{c_{k-1}}^{\$} \ar[rr] & & \delta_{(\$,1-\varepsilon_k)}(C_{c_{k-1}}^{\$}) \\
	& (c_1,0)\dots(c_{k-1},0) & &(\$,\varepsilon_k)
  }
\]
As the last letter of $\br$ is $\neg$, Lemmas~\ref{L:inverseQuneg} and \ref{L:inversestate} imply the equivalences
\[
(Q(\br)\delta_{(\$,1-\varepsilon_k)}(C_{c_{k-1}}^{\$}))^2 \equiv (\neg \delta_{(\$,1-\varepsilon_k)}(C_{c_{k-1}}^{\$}))^2\equiv\id\,.
\]
So we have $\sigma_{Q(C(\ba))C_e^{\$}}^2(\bx)=\bx$.

Assume now $c_k\not=\$$ and $\varepsilon_k=1$. Then we find $\br=\rho_{(c_k,1)}(\bq)$, whence
\[
 \xymatrix{
	& (c_1,0)\dots(c_{k-1},0)\ar[dd] & &(c_k,1)\ar[dd] \\
    Q(C(a_1\dots a_n)) \ar[rr] & \ins{\eqref{E:potentielconvertirLong}}  &  Q(\bq) \ar[rr] &  \ins{\eqref{E:CDavecpotentiel}} &  Q(\br) \\
    & (c_1,1)\dots(c_{k-1},1) \ar[dd]& & (c_k,0)\ar[dd]\\
	C_{\blank}^{\$} \ar[rr] &  \ins{\eqref{E:deltaC4}} & C_{c_{k-1}}^{\$} \ar[rr] & \ins{\eqref{E:deltaC5}} & \neg\\
	& (c_1,0)\dots(c_{k-1},0) & &(c_k,1)
  }
\]
By Lemma~\ref{L:inverseQuneg}, we have $(Q(\br)\neg)^2\equiv\id$, and we deduce $\sigma_{Q(C(\ba))C_e^{\$}}^2(\bx)=\bx$. Thisestablishes Claim 1.
\end{proof}

Returning to the proof of Lemma~\ref{L:ordrepetit},  let $k$ be minimal such that $c_k=\$$, $\varepsilon_k=1$, or $c_k\in F$. Then Condition $(ii)$ of the claim holds. Moreover, by minimality of $k$, Condition $(i)$ holds as well. If $k\le n$, then, by Lemma~\ref{L:regularcompose}, the word $\rho_{(c_1,0)(c_2,0)\dots(c_{k-1},0)}(C(\ba))$ is regular with potential either $n-(k-1)\ge 1$ or $\infty$, hence Condition $(iii)$ holds. Then Claim~1 implies $\sigma_{Q(C(\ba))C_e^{\$}}^2(\bx)=\bx$. Thus $(1)$ holds.

We can now assume that $k\ge n+1$. Assume that there is $1\le i\le n$ such that $a_i\not=c_i$. Then, by Corollary~\ref{C:erreurpotentielinfini}, the word $\rho_{(c_1,0)(c_2,0)\dots(c_{k-1},0)}(C(\ba))$ is regular with potential $\infty$, so Condition $(iii)$ holds. It follows from Claim~1 that $(2)$ holds.

Finally, assume that $c_{n+1}\not=\$$. Set $\bq=\rho_{(a_1,0)\dots (a_n,0)}(C(a_1\dots a_n))$. Lemma~\ref{L:deltaZareg} implies $\bq= P_{b_1}\dots P_{b_{n-1}}W_{a_{n-1}}^{a_n}$, thus there is no counting state in $\bq$. Hence, by Lemma~\ref{L:motsanscompteur}(1), we have $\rho_{(c_{n+1},0)}(\bq) = \rho_{(c_{n+1},1)}(\bq)$. Set $\br=\rho_{(c_{n+1},0)}(\bq)$. We obtain the following cross diagram 
\[
 \xymatrix{
	& (a_1,0)\dots(a_{n},0)\ar[dd] & &(c_{n+1},\varepsilon_{n+1})\ar[dd] \\
    Q(C(a_1\dots a_n)) \ar[rr] & \ins{\eqref{E:potentielconvertir}} &  Q(\bq) \ar[rr] &  \ins{\eqref{E:CDsanssanscompteur}} &  Q(\br) \\
    & (a_1,1)\dots(a_{n},1) \ar[dd]& & (c_{n+1},\varepsilon_{n+1})\ar[dd]\\
	C_{\blank}^{\$} \ar[rr] & \ins{\eqref{E:deltaC4}} & C_{a_{n}}^{\$} \ar[rr] & & \delta_{(c_{n+1},\varepsilon_{n+1})}(C_{a_{n}}^{\$})\\
	& (a_1,0)\dots(a_{n},0)\ar[dd] & &(c_{n+1},1-\varepsilon_{n+1})\ar[dd] \\
    Q(C(a_1\dots a_n)) \ar[rr] & \ins{\eqref{E:potentielconvertir}}  &  Q(\bq) \ar[rr] & \ins{\eqref{E:CDsanssanscompteur}}  &  Q(\br) \\
    & (a_1,1)\dots(a_{n},1) \ar[dd]&  & (c_{n+1},1-\varepsilon_{n+1})\ar[dd]\\
	C_{\blank}^{\$} \ar[rr] & \ins{\eqref{E:deltaC4}}  & C_{a_{n}}^{\$} \ar[rr] & & \delta_{(c_{n+1},1-\varepsilon_{n+1})}(C_{a_{n}}^{\$})\\
	& (a_1,0)\dots(a_{n},0) & &(c_{n+1},\varepsilon_{n+1})
  }
\]
If $\varepsilon_{n+1}=0$, then $\delta_{(c_{n+1},\varepsilon_{n+1})}(C_{a_{n}}^{\$})=\neg$ and $\delta_{(c_{n+1},1-\varepsilon_{n+1})}(C_{a_{n}}^{\$})=C_{c_{n+1}}^{\$}$. Therefore, we find
\begin{align*}
(Q(\br)  \delta_{(c_{n+1},\varepsilon_{n+1})}&(C_{a_{n}}^{\$}) Q(\br)\delta_{(c_{n+1},1-\varepsilon_{n+1})}(C_{a_{n}}^{\$}))^2\\ &= (Q(\br)\neg Q(\br) C_{c_{n+1}}^{\$})^2\\
&\equiv (\neg C_{c_{n+1}}^{\$})^2 & &\text{by Lemma~\ref{L:inverseQuneg},}\\
&\equiv \id & &\text{by Lemma~\ref{L:inversestate}.}
\end{align*}
Therefore $\sigma_{Q(C(a_1\dots a_n))C_{\blank}^{\$}}^4(\bx)=\bx$. Similarly, if $\varepsilon_{n+1}=1$, then $\delta_{(c_{n+1},\varepsilon_{n+1})}(C_{a_{n}}^{\$})=C_{c_{n+1}}^{\$}$ and $\delta_{(c_{n+1},1-\varepsilon_{n+1})}(C_{a_{n}}^{\$})=\neg$. However, it follows from Lemmas~\ref{L:inversestate} and \ref{L:inverseQuneg} that $Q(\br)C_{b_{n+1}}^{\$}Q(\br)\neg$ is of order $2$, implying $\sigma_{Q(C(a_1\dots a_n))C_{\blank}^{\$}}^4(\bx)=\bx$. Therefore, $(3)$ holds. This concludes the proof of Lemma~\ref{L:ordrepetit}.
\end{proof}

We now show that, if a final state appears in configuration, then the order of the corresponding element is two. This is the halting condition for powers of elements in our group~$G$.

\begin{lemma}\label{L:order2}
Assume that there is $b\in T$ such that $t(b,\blank,\blank)\not\in F$. Let $\ba\in T^*$. Assume that one letter of $\ba$ is in $F$. Then the order of ${Q(C(\ba))C_\blank^\$}$ is two in~$G$.
\end{lemma}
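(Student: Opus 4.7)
Set $g=Q(C(\ba))C_\blank^\$$ and $n=\card{\ba}$. The plan is to show $g^2\equiv\id$, i.e.\ that $\sigma_g^2$ fixes every word of $\Sigma^*$. This is a direct appeal to Lemma~\ref{L:ordrepetit}, whose clauses~(1) and~(2) are designed so that the hypothesis that some letter of $\ba$ lies in $F$ always forces one of them to apply to any sufficiently long input.

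Fix $1\le i\le n$ with $a_i\in F$, which exists by hypothesis. For $\bx=x_1\dots x_N$ of length $N\ge n$, I split into two cases. If some $j\le n$ satisfies $x_j\ne(a_j,0)$, Lemma~\ref{L:ordrepetit}(2) immediately gives $\sigma_g^2(\bx)=\bx$. Otherwise $x_j=(a_j,0)$ for every $j\le n$; in particular $x_i=(a_i,0)\in F\times\set{0,1}$, and Lemma~\ref{L:ordrepetit}(1) gives $\sigma_g^2(\bx)=\bx$. Thus $g^2$ fixes every word of length at least~$n$.

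For $\bx$ with $\card{\bx}<n$, I invoke the standard prefix-preservation property of Mealy endomorphisms, itself a direct consequence of~\eqref{E:defext1}. Picking any $\bz$ with $\by:=\bx\bz$ of length at least $n$, the previous step gives $\sigma_g^2(\by)=\by$; since $\sigma_g^2(\bx)$ is a prefix of $\sigma_g^2(\by)$ of length $\card{\bx}$, comparing prefixes forces $\sigma_g^2(\bx)=\bx$. This establishes $g^2\equiv\id$.

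The only delicate point is the short-word reduction above; the content of the lemma otherwise rests entirely on Lemma~\ref{L:ordrepetit}, with the hypothesis $a_i\in F$ acting as the bridge between clauses~(1) and~(2). To pin the order at exactly~$2$ (ruling out~$1$), one can, when a $b\in T\setminus F$ with $t(b,\blank,\blank)\notin F$ is extractable from the first hypothesis, invoke the witness construction of Lemma~\ref{L:ordrefindeliste} to produce an $\bx$ with $\sigma_g(\bx)\ne\bx$.
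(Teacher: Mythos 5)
Your proof is correct and follows essentially the same route as the paper: both reduce to Lemma~\ref{L:ordrepetit}(1) and (2) via the observation that if every $x_j$ equals $(a_j,0)$ then $x_i\in F\times\set{0,1}$, and both use Lemma~\ref{L:ordrefindeliste} to rule out order one. Your explicit prefix-preservation argument for words shorter than $n$ just spells out what the paper compresses into ``we can assume $N\ge n$.''
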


\begin{proof}
Lemma~\ref{L:ordrefindeliste} implies that the order of ${Q(C(\ba))C_\blank^\$}$ is at least two. Consider $\ba=a_1\dots a_n$, and let $i\le n$ be such that $a_i\in F$.

Let~$\bx$ in $\Sigma^*$. Write $x=(c_1,\varepsilon_1)\dots(c_N,\varepsilon_N)$. We can assume that $N\ge n$. Note that either $c_i\not=a_i$ or $c_i\in F$. Hence, it follows from Lemma~\ref{L:ordrepetit}(1) and (2) that $\sigma_{Q(C(\ba))C_\blank^\$}^2(\bx)=\bx$. Therefore the order of ${Q(C(\ba))C_\blank^\$}$ is two.
\end{proof}

We now connect the orders of elements defined by two successive configurations. This corresponds to one step of computation of the automaton.

\begin{lemma}\label{L:ordreunpasinduction}
Assume that there is $b\in T$ such that $t(b,\blank,\blank)\not\in F$. Let $\ba\in (T\setminus F)^*$. If the order of $Q(C(\tau(\ba)))C_\blank^\$$ is finite in~$G$, then the order of $Q(C(\ba))C_\blank^\$$ is at most twice the order of $Q(C(\tau(\ba)))C_\blank^\$$ in~$G$.
\end{lemma}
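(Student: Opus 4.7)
The plan is to show that $g^{2m}$ acts trivially on~$\Sigma^*$, where $g = Q(C(\ba))C_\blank^\$$ and $m$ is the (finite) order of $h = Q(C(\tau(\ba)))C_\blank^\$$; this immediately gives the desired bound $|g| \le 2m$. The main tool is the cross diagram \eqref{CD:multiplierordre} of Lemma~\ref{L:deltaZa2}, which says that $\sigma_{g^2}$ sends $Z(\ba\$,0)\by$ to $Z(\ba\$,0)\sigma_h(\by)$, so $g^{2m}$ automatically fixes every word beginning with $Z(\ba\$,0)$. To propagate this to all of $\Sigma^*$, I will exploit two further facts: the order~$m$ is even (by Lemma~\ref{L:ordrefindeliste} applied to $\tau(\ba)$), so $2m$ is divisible by both~$2$ and~$4$; and Lemma~\ref{L:ordrepetit} already handles most ``bad'' words at low cost.

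Setting $n = |\ba|$, I reduce to verifying $\sigma_g^{2m}(\bx)=\bx$ only for words $\bx = x_1 \cdots x_N$ of length $N \ge n+1$; shorter words are handled automatically because $\sigma_g$ is a tree endomorphism, so it respects prefixes. I then case-split on the first $n+1$ letters of~$\bx$. If some $x_i$ with $i \le n$ differs from $(a_i, 0)$, Lemma~\ref{L:ordrepetit}(2) gives $\sigma_g^2(\bx) = \bx$, hence $\sigma_g^{2m}(\bx)=\bx$. Otherwise $\bx$ starts with $Z(\ba, 0)$ and we branch on $x_{n+1}$: if $x_{n+1} = (\$, 0)$ the cross diagram applies directly; if $x_{n+1} \notin \{\$\}\times \{0,1\}$ then Lemma~\ref{L:ordrepetit}(3) gives $\sigma_g^4(\bx) = \bx$, and evenness of~$m$ closes the case since $4\mid 2m$.

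The only remaining, and genuinely delicate, subcase is $\bx = Z(\ba, 0)(\$, 1)\by$, which is covered by none of the above. The plan is to handle it by an injectivity trick: compute $\sigma_g(\bx)$ directly, observe that it lies in the main cross-diagram regime, and cancel. Concretely, \eqref{E:CDsansDolarnierreur} together with \eqref{E:CDpassagedollar} show that $Q(C(\ba))$ converts $Z(\ba,0)(\$,1)$ to $Z(\ba,1)(\$,1)$ while passing to state $Q(C(b_1\cdots b_n))$; the counting rules \eqref{E:deltaC4} (iterated) and \eqref{E:deltaC1} then say that $C_\blank^\$$ turns $Z(\ba,1)(\$,1)$ back into $Z(\ba\$,0)$. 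Hence $\sigma_g(\bx)$ begins with $Z(\ba\$,0)$, so the main case gives $\sigma_g^{2m}(\sigma_g(\bx)) = \sigma_g(\bx)$, and since~$g$ is invertible in~$G$ we conclude $\sigma_g^{2m}(\bx) = \bx$. I expect this edge case to be the main obstacle: without it the four cases tile $\Sigma^*$ trivially, but the $(\$,1)$ variant requires an extra explicit computation to show that one application of~$g$ drops us back into the regime of \eqref{CD:multiplierordre}.
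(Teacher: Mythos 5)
Your proposal is correct and follows essentially the same route as the paper: reduce to words of length at least $n+1$, dispose of discrepancies in the first $n$ letters and of $x_{n+1}\notin\set{\$}\times\set{0,1}$ via Lemma~\ref{L:ordrepetit}(2),(3) together with the evenness of $m$ from Lemma~\ref{L:ordrefindeliste}, and use the cross diagram \eqref{CD:multiplierordre} on words starting with $Z(\ba\$,0)$. Your treatment of the $x_{n+1}=(\$,1)$ subcase (one application of $g$ lands in the $Z(\ba\$,0)$ regime, then cancel by injectivity of $\sigma_g$) is just a rephrasing of the paper's observation that $\bx$ and $\sigma_g(\bx)$ have orbits of the same size.
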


\begin{proof}
Consider $\ba=a_1\dots a_n\in (T\setminus F)^n$. Denote by $m$ the order of $Q(C(\tau(\ba)))C_\blank^\$$. It follows from Lemma~\ref{L:ordrefindeliste} that $m$ is even.

Let $\bx\in\Sigma^*$ of length $N>n$. Assume that there is $k\le n$ such that $x_k\not=(a_k,0)$. It follows from Lemma~\ref{L:ordrepetit}(2) that $\sigma_{Q(C(\ba))C_\blank^\$}^2(\bx)=\bx$.  Similarly, if $x_{n+1}\not=(\$,0)$ and $x_{n+1}\not=(\$,1)$, then, by Lemma~\ref{L:ordrepetit}(3), we have $\sigma_{Q(C(\ba))C_\blank^\$}^4(\bx)=\bx$. Therefore, in both cases, as $m$ is even, we find $\sigma_{Q(C(\ba))C_\blank^\$}^{2m}(\bx)=\bx$.

We can assume that, for every $ k\le n$, we have $x_k=(a_k,0)$. If $x_{n+1}=(\$,1)$, then we set $\br=\rho_{(\$,1)}(\bq)$. Then, we obtain the following cross diagram
\[
 \xymatrix{
	& (a_1,0)\dots(a_{n},0)\ar[dd] & &(\$,1)\ar[dd] \\
    Q(C(a_1\dots a_n)) \ar[rr] & \ins{\eqref{E:potentielconvertir}} &  Q(\bq) \ar[rr] &  \ins{\eqref{E:CDsanssanscompteur}} &  Q(\br) \\
    & (a_1,1)\dots(a_{n},1) \ar[dd]& &(\$,1)\ar[dd]\\
	C_{\blank}^{\$} \ar[rr] & \ins{\eqref{E:deltaC4}} & C_{a_{n}}^{\$} \ar[rr] & \ins{\eqref{E:deltaC1}} & C_{\blank}^{\$}\\
	& (a_1,0)\dots(a_{n},0) & &(\$,0)
}
\]
Hence $\by=\sigma_{Q(C(\ba))C_\blank^\$}(\bx)$ starts with $(a_1,0)\dots(a_{n},0)(\$,0)$. Moreover, the size of the orbits of $\sigma_{Q(C(\ba))C_\blank^\$}$ on~$\bx$ and on~$\by$ are the same. Thus we can assume $x_{n+1}=(\$,0)$.

Now, Lemma~\ref{L:deltaZa2} implies
\[
\sigma_{Q(C(\ba))C_\blank^\$}^{2m}(\bx) = \sigma_{(Q(C(\ba))C_\blank^\$)^2}^{m}(\bx) = x_1\dots x_n x_{n+1} \sigma_{Q(C(\tau(\ba)))C_\blank^\$}^m(x_{n+2}\dots x_N)=\bx\,.
\]
Therefore, the order of ${Q(C(\ba))C_\blank^\$}$ is at most $2m$.
\end{proof}

Using Lemmas~\ref{L:order2},~\ref{L:ordreunpasinduction}, and~\ref{L:minorerordre}, we inductively deduce a characterization of the order of $Q(C(\ba))C_\blank^\$$ depending on the number of steps required for the cellular automaton to halt when it starts from a configuration $\ba$:

\begin{theorem}\label{T:ordrenotreelement}
Let $\ba\in T^*$, and let $k\ge 1$. Assume that, in $\ba,\tau(\ba),\dots, \tau^k(\ba)$, there is no letter of $F$, and that, in $\tau^{k+1}(\ba)$, there is a letter of $F$. Then the order of $Q(C(\ba))C_\blank^\$$ is $2^{k+2}$ in~$G$.
\end{theorem}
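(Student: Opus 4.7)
The plan is to combine the lower bound of Lemma~\ref{L:minorerordre} with iterated applications of the upper bound of Lemma~\ref{L:ordreunpasinduction}, anchored at the halting step by Lemma~\ref{L:order2}.

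First, I would verify the side hypothesis shared by several of these lemmas, namely that there exists $b\in T$ with $t(b,\blank,\blank)\notin F$. Indeed, by Notation~\ref{N:applyingcellularautomata} the last letter of $\tau^k(\ba)$ is of the form $t(c,\blank,\blank)$, where $c$ is the last letter of $\tau^{k-1}(\ba)$ (taken to be $\blank$ in the degenerate case), and this letter does not lie in~$F$ by hypothesis, so we may take $b=c$.

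For the lower bound, I would apply Lemma~\ref{L:minorerordre} directly: by hypothesis, none of $\ba,\tau(\ba),\dots,\tau^k(\ba)$ contains a letter of~$F$, so the order of $Q(C(\ba))C_\blank^\$$ in~$G$ is at least $2^{k+2}$.

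For the upper bound, since $\tau^{k+1}(\ba)$ contains a letter of~$F$, Lemma~\ref{L:order2} gives that the order of $Q(C(\tau^{k+1}(\ba)))C_\blank^\$$ equals~$2$. I would then run a downward induction on $j\in\{k+1,k,\dots,0\}$ to show that the order of $Q(C(\tau^j(\ba)))C_\blank^\$$ is at most $2^{k+2-j}$. The case $j=k+1$ is the previous step. For $0\le j\le k$, the hypothesis ensures $\tau^j(\ba)\in(T\setminus F)^*$, so Lemma~\ref{L:ordreunpasinduction} applies---the induction hypothesis furnishing the needed finiteness---and at most doubles the bound, giving order at most $2\cdot 2^{k+1-j}=2^{k+2-j}$. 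Taking $j=0$ yields the upper bound $2^{k+2}$, which combined with the lower bound gives the desired equality.

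The argument is essentially bookkeeping; the real work has already been done in the preceding lemmas. I do not foresee any significant obstacle, beyond verifying once that the side condition on $t(b,\blank,\blank)$ is met and propagating the finiteness of the intermediate orders through the downward induction.
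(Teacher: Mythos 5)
Your proposal is correct and is exactly the argument the paper intends: the lower bound from Lemma~\ref{L:minorerordre}, the anchor value $2$ at the halting configuration from Lemma~\ref{L:order2}, and the doubling upper bound propagated backwards via Lemma~\ref{L:ordreunpasinduction}, with the side condition $t(b,\blank,\blank)\notin F$ checked from the last letter of $\tau^k(\ba)$. No discrepancies to report.
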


We are now ready to prove that the element of the group~$G$ associated with a configuration~$\ba$ is of finite order if, and only if, the cellular automaton~$(T,t)$ halts when it starts from $\ba$:

\begin{corollary}\label{C:ordrefiniarret}
For every $\ba=a_1\dots a_n$ in~$(T\setminus F)^*$, the following are equivalent: 
\begin{enumerate}
\item There is $k$ such that  $\sigma_{Q(C_{\blank}^{a_1}\dots C_{\blank}^{a_n})C_\blank^\$}$ is of order $2^{k+2}$ in~$G$.
\item $\sigma_{Q(C_{\blank}^{a_1}\dots C_{\blank}^{a_n})C_\blank^\$}$ is of finite order in~$G$.
\item There is $k$ such that $\tau^{k+1}(\ba)$ contains a symbol  of~$F$.
\item There is $k$ such that $[\dots [[\sigma_{Q(C_{\blank}^{a_1}\dots C_{\blank}^{a_n})C_\blank^\$},\sigma_{\neg C_\blank^\$}],\sigma_{\neg C_\blank^\$}],\dots,\sigma_{\neg C_\blank^\$}]$, $2^{k+2}$ commutators, is the identity.
\end{enumerate}
\end{corollary}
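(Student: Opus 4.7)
The plan is to prove the cycle $(1)\Rightarrow(4)\Rightarrow(2)\Rightarrow(3)\Rightarrow(1)$. The two conceptual ingredients are Corollary~\ref{C:conjuguepuissance}, which turns iterated commutators into powers, and Theorem~\ref{T:ordrenotreelement} together with Lemma~\ref{L:minorerordre}, which connect the order of $\sigma_{Q(C(\ba))C_\blank^\$}$ with the halting time of $(T,t)$ on the configuration $\ba$.

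For $(1)\Leftrightarrow(4)$, observe that $Q(C(\ba))$ is a palindrome (an immediate induction from $Q(\bq p)=Q(\bq)pQ(\bq)$) and that the single-letter word $C_\blank^\$$ is trivially a palindrome. Applying Corollary~\ref{C:conjuguepuissance} with $\bp=Q(C(\ba))$ and $\bq=C_\blank^\$$ gives
\[
[\dots[[\sigma_{\bp\bq},\sigma_{\neg\bq}],\sigma_{\neg\bq}],\dots,\sigma_{\neg\bq}]\equiv \sigma_{\bp\bq}^{(-2)^n}
\]
for $n$ commutators. For $(1)\Rightarrow(4)$, if the order is $2^{k+2}$ then $2^{k+2}\mid 2^{2^{k+2}}$ (since $k+2\le 2^{k+2}$), so taking $n=2^{k+2}$ kills the element. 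For $(4)\Rightarrow(2)$, if $\sigma_{\bp\bq}^{(-2)^n}=\id$ then the order of $\sigma_{\bp\bq}$ divides $2^n$, hence is finite.

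For $(2)\Rightarrow(3)$, I would argue by contrapositive: if none of $\tau(\ba),\tau^2(\ba),\dots$ contains a letter of $F$, then Lemma~\ref{L:minorerordre} applies for every $k\ge 1$ and forces the order to be at least $2^{k+2}$ for all $k$, hence infinite. For $(3)\Rightarrow(1)$, let $k_0\ge 0$ be minimal with $\tau^{k_0+1}(\ba)$ meeting $F$. When $k_0\ge 1$, the hypothesis of Theorem~\ref{T:ordrenotreelement} is satisfied and the order equals $2^{k_0+2}$ directly.

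The only delicate point is the boundary case $k_0=0$, where Theorem~\ref{T:ordrenotreelement} does not apply since its hypothesis requires $k\ge 1$. Here the lower bound of $4$ is provided by the base step of the induction in the proof of Lemma~\ref{L:minorerordre} (a combination of Lemma~\ref{L:deltaZa2} and Lemma~\ref{L:ordrefindeliste} that only needs $\ba\in(T\setminus F)^*$). The upper bound of $4$ comes from Lemma~\ref{L:order2} applied to $\tau(\ba)$, giving order $2$ for $\sigma_{Q(C(\tau(\ba)))C_\blank^\$}$, followed by Lemma~\ref{L:ordreunpasinduction}, which doubles this to an upper bound of $4$. So the order is exactly $4=2^{0+2}$, completing the cycle. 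Aside from this boundary step and the bookkeeping of exponents in the commutator identity, the result assembles mechanically from the machinery of the preceding sections.
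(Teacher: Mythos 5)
Your proposal is correct and follows the paper's own route: the link between (1) and (4) comes from Corollary~\ref{C:conjuguepuissance} applied to the palindromes $Q(C(\ba))$ and $C_\blank^\$$, (2)$\Rightarrow$(3) is the contrapositive of Lemma~\ref{L:minorerordre}, and (3)$\Rightarrow$(1) rests on Theorem~\ref{T:ordrenotreelement}, exactly as in the paper (which states these implications directly rather than as a cycle). Your separate treatment of the boundary case $k=0$, where $\tau(\ba)$ already meets $F$ and Theorem~\ref{T:ordrenotreelement} does not literally apply, is in fact more careful than the paper's one-line citation; the only caveat is that the lemmas you invoke there (Lemmas~\ref{L:ordrefindeliste}, \ref{L:deltaZa2}, \ref{L:order2}, \ref{L:ordreunpasinduction}) also carry the hypothesis that some $b\in T\setminus F$ satisfies $t(b,\blank,\blank)\notin F$ --- true for the cellular automata actually used later, but not a consequence of $\ba\in(T\setminus F)^*$ alone, so it is not quite accurate to say that step ``only needs $\ba\in(T\setminus F)^*$.''
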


\begin{proof}
As the word $Q(C_{\blank}^{a_1}\dots C_{\blank}^{a_n})$ and the one-letter word $C_\blank^\$$ are palindromes, it follows from Corollary~\ref{C:conjuguepuissance} that $(1)$ and $(4)$ are equivalent. The implication $(1)\Longrightarrow(2)$ is immediate.

Assume that $(3)$ fails, and let $k\ge 1$. Then, no symbol of $F$ appears in the words $\ba, \tau(\ba),\dots, \tau^k(\ba)$. It follows from Lemma~\ref{L:minorerordre} that the order of ${Q(C_{\blank}^{a_1}\dots C_{\blank}^{a_n})C_\blank^\$}$ is at least $2^{k+2}$. As this is true for each integer $k$, the order is infinite. So the implication $(2)\Longrightarrow (3)$ holds.

Finally, the implication $(3)\Longrightarrow (1)$ follows from Theorem~\ref{T:ordrenotreelement}.
\end{proof}

\section{Simulation of a Turing Machine}\label{S:indecidabilite}

It is known that one can simulate a Turing machine using a cellular automaton. There exists a very small example of a Turing-universal cellular automaton (rule 110, see Cook \cite{Cook1,Cook2}), and even one of a universal cellular automaton with only four states (see Ollinger and Richard \cite{OR}). However, a periodic tape is required, and, moreover, there is no halting state. Thus, we shall resort here to a more direct simulation of a universal Turing machine, based on the universal Turing machine constructed by Minsky in \cite{Minsky}. Rogozhin in \cite{R} and Neary and Woods in \cite{NW} gave examples of  smaller universal Turing machines, but the latter do not seem to provide a smaller cellular automaton.

Smith proves in \cite{Smith} that every $m$-state $n$-symbol Turing machine can be simulated by a one-dimensional cellular automaton with $n+2m$ states and neighborhood $\set{-1,0,1}$. The states of the cellular automaton are the symbols of the Turing machine plus two copies of the state set of the Turing machine (in order to indicate the direction the head moves). However, as mentioned by Lindgren and Nordahl in \cite{LN}, we only need to duplicate those states such that the head can, for an appropriate symbol, go to the left and to the right. In this way, starting from Minsky's universal Turing machine $M$ with 7 states and 4 symbols (cf. \cite{Minsky}), one obtains a Turing-universal cellular automaton with $13$ states. Moreover, we can add a symbol~$f$, and complete the cellular automaton writing~$f$ whenever the transition is undefined (i.e. $M$ halts).

This gives us a complete Turing-universal cellular automaton $(T,t)$ with $\card T=14$. Denote by $\blank$ the blank symbol of $M$ (observe that $\blank\in T$). Set $F=\set{f}$. Minsky's Turing machine $M$ only needs a one-way infinite tape to simulate Turing machines (through the simulation of tag systems). Therefore, the construction given in Notation~\ref{N:applyingcellularautomata} simulates one step of $M$ when starting from a (finite) configuration corresponding to the simulation of a tag system.

Considering the Mealy automaton constructed in Section~\ref{S:construction}, the main result now directly follows from Corollary~\ref{C:ordrefiniarret}, and from the fact that the halting problem is undecidable (Turing \cite{T}).

\begin{theorem}
There is an automaton group $G$ such that both the order problem and the Engel problem are undecidable in $G$.
\end{theorem}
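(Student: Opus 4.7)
The plan is to assemble everything that was built in the previous sections and reduce the halting problem for a universal Turing machine to the order problem (and, essentially by the same stroke, to the Engel problem) in a single fixed group~$G$.

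First, I would fix the cellular automaton $(T,t)$ produced at the start of Section~\ref{S:indecidabilite}: the one with $14$ states, blank symbol $\blank$, and $F=\set{f}$, obtained from Minsky's 7-state 4-symbol universal Turing machine~$M$ via the Smith/Lindgren--Nordahl encoding. With this choice, each finite configuration $\ba\in(T\setminus F)^*$ encodes an input $w$ of $M$, and the cellular automaton reaches a cell in $F$ in finitely many steps if, and only if, $M$ halts on~$w$. I would then let $G$ be the automaton group generated by the Mealy automaton $\cA=(A,\Sigma,\delta,\sigma)$ associated to $(T,t)$ by the construction of Section~\ref{S:construction}; this $G$ will be the group of the theorem.

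Next, given a word $w$ in the alphabet of $M$, translate it into a finite configuration $\ba=a_1\dots a_n\in(T\setminus F)^*$ of the cellular automaton, and consider the element
\[
g_{\ba}=\sigma_{Q(C_\blank^{a_1}\cdots C_\blank^{a_n})C_\blank^\$}\in G\,.
\]
The map $\ba\mapsto g_\ba$ is computable. Corollary~\ref{C:ordrefiniarret} gives the equivalence between $g_\ba$ having finite order in~$G$, being of order a power of~$2$, and the Turing machine $M$ halting on the corresponding input. Since the halting problem is undecidable by Turing's theorem, the order problem for $G$ is undecidable; this gives the first half of the theorem.

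For the Engel problem, I would take the \emph{single} fixed element $h=\sigma_{\neg C_\blank^\$}\in G$, and observe that the words $\bp=Q(C_\blank^{a_1}\cdots C_\blank^{a_n})$ and $\bq=C_\blank^\$$ are both palindromes, so Corollary~\ref{C:conjuguepuissance} applies and gives
\[
[\dots[[g_\ba,h],h],\dots,h]\equiv g_\ba^{(-2)^n}
\]
for the $n$-iterated commutator. Hence $(g_\ba,h)$ is an Engel pair if, and only if, $g_\ba^{(-2)^n}$ is the identity for some $n$, equivalently $g_\ba$ is of order a power of~$2$; by Corollary~\ref{C:ordrefiniarret} this is again equivalent to $M$ halting on the input encoded by~$\ba$. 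Consequently, the Engel problem in $G$ is also undecidable, and the same group $G$ witnesses both undecidability statements.

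There is no real obstacle left: all the analytic and combinatorial work has been done in Sections~\ref{S:construction}--\ref{S:indecidabilite} and in Corollaries~\ref{C:conjuguepuissance} and~\ref{C:ordrefiniarret}. The only thing to check carefully is that the map $\ba\mapsto (g_\ba,h)$ is genuinely computable (which is clear from the definitions of $Q$ and $C$) and that the reduction from Turing machine halting to the cellular automaton halting in~$F$ is the one already discussed at the beginning of Section~\ref{S:indecidabilite}; then both undecidability claims fall out immediately.
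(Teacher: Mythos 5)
Your proof is correct and follows essentially the same route as the paper: fix the $14$-state Turing-universal cellular automaton of Section~\ref{S:indecidabilite}, apply the Mealy construction of Section~\ref{S:construction}, and conclude via Corollary~\ref{C:ordrefiniarret} (whose Engel clause rests on Corollary~\ref{C:conjuguepuissance}, exactly as you invoke it) together with the undecidability of the halting problem. Your write-up merely spells out the computability of $\ba\mapsto(g_\ba,h)$ and the power-of-two order observation, which the paper leaves implicit.
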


\begin{remark}
As $\card T=14$, it follows from the definition of $\Sigma$ and $A$ that the number of symbols is $\card\Sigma=30$, and the number of states is $\card A= 421$.
\end{remark}

We conclude the paper with an alternative construction that allows one to reduce the number of states and symbols. Fix a Turing machine $M=(S,Q,s_0,q_0,\varphi)$, where $S$ is the state set, $Q$ is the alphabet, $s_0\in S$ is the initial state, $q_0\in Q$ is the blank symbol, and $\varphi\colon E\to S\times Q\times \set{\leftarrow,\rightarrow}$ is the transition map, where $E\subseteq S\times Q$. The Turing machine halts if it reaches a configuration such that it cannot continue. That is, the state is $s\in S$ and the head read a symbol $q\in Q$, such that $(q,s)\not\in E$. Assume in addition that the Turing machine operates on a tape indexed by $\N$.

Set $T=S\sqcup Q$. A configuration of the Turing machine $M$ can be represented by a word $q_1\dots q_k s q_{k+1}\dots q_n$ when the head points to the cell number $k$ in state $s$.

A cellular automaton with neighborhood $\set{-1,0,1}$ is not sufficient to simulate the Turing machine directly, but the Neighborhood $\set{-2,-1,0,1}$ is. However, in order to optimize the size, we directly give the construction of a Mealy automaton, without going through the description of the cellular automaton.

As before we set $T'=T\cup\set{\$}$ and $\Sigma=T'\times \set{0,1}$. We do not need to memorize much in order to compute one symbol of the next configuration. The set of states is defined in the following way
\[
A=\set{\neg,C^{\$},W}\cup \set{C}\times Q\times S \cup \set{C}\times Q\times Q \cup \set{C}\times S\times Q \cup \set{P}\times Q \cup \set{P}\times S \cup \set{W}\times Q\,.
\]

Set $\blank=q_0$. We do not need final states as before, because the latter correspond to subwords of the form $as$ where $a\in Q$, $s\in S$, and $(s,a)\not\in E$. The \emph{modifying} states are the states in $\set{\neg,C^{\$}}\cup \set{C}\times Q\times S \cup \set{C}\times Q\times Q \cup \set{C}\times S\times Q$. As before, for all $p\in A$ and $x\in\Sigma$, we set $\sigma(p,x)=x^\bot$ if, and only if, $p$ is a modifying state, otherwise we set $\sigma(p,x)=x$.

The transition map is defined below. For all $a,b,d\in Q$, all $r,s\in S$, and all $u,v,w\in Q\cup S$, we consider $\delta\colon A\times\Sigma \to A$ so that $\delta(p, x)$ is:

\newcommand\BOX[1]{\leavevmode\hbox to 12mm{$\bullet\ #1$\hfil}}

\BOX{\neg}if $p=\neg$,

\BOX{C^\$}else if $p=C^\$$ and $x=(\$,1)$,

\BOX{C_{\blank}^{\blank}}else if $p=C^\$$ and $x=(\$,0)$,

\BOX{C_\blank^u}else if $p=P_u$ and $x=(\$,\varepsilon)$,

\BOX{C_{\blank}^{\blank}}else if $p=W_a$ and $x=(\$,\varepsilon)$,

\BOX{\neg}else if $p=W$ and $x=(\$,\varepsilon)$,

\BOX{\neg}else if $p=C_u^v$ and $x=(\$,\varepsilon)$,

\BOX{P_u}else if $p=P_u$ and $x=(v,\varepsilon)$,

\BOX{\neg}else if $p=C_u^v$ and $x=(w,0)$, with $w\not=v$,

\BOX{C_d^b}else if $p=C_u^b$ and $x=(d,1)$,

\BOX{C_r^b}else if $p=C_a^b$ and $x=(s,1)$, $(s,a)\in E$, and $\varphi(s,a)=(d,r,\rightarrow)$,

\BOX{C_a^b}else if $p=C_a^b$ and $x=(s,1)$, $(s,a)\in E$, and $\varphi(s,a)=(d,r,\leftarrow)$,

\BOX{\neg}else if $p=C_a^b$ and $x=(s,1)$, and $(s,a)\not\in E$, 

\BOX{P_s}else if $p=C_s^b$ and $x=(b,0)$,

\BOX{W_b}else if $p=C_a^b$ and $x=(b,0)$,

\BOX{\neg}else if $p=C_a^s$ and $x=(r,1)$,

\BOX{C_b^s	}else if $p=C_a^s$ and $x=(b,1)$,

\BOX{\neg}else if $p=C_a^s$ and $x=(s,0)$ and $(s,a)\not\in E$,

\BOX{P_d}else if $p=C_a^s$ and $x=(s,0)$, $(s,a)\in E$ and $\varphi(s,a)=(d,r,\leftarrow)$,

\BOX{W}else if $p=C_a^s$ and $x=(s,0)$, $(s,a)\in E$ and $\varphi(s,a)=(d,r,\rightarrow)$,

\BOX{P_a}else if $p=W_a$ and $x=(b,\varepsilon)$

\BOX{P_r}else if $p=W_a$ and $x=(s,\varepsilon)$, $(s,a)\in E$, and $\varphi(s,a)=(d,r,\leftarrow)$,

\BOX{P_a}else if $p=W_a$ and $x=(s,\varepsilon)$, $(s,a)\in E$, and $\varphi(s,a)=(d,r,\rightarrow)$,

\BOX{\neg}else if $p=W_a$ and $x=(s,\varepsilon)$, $(s,a)\not\in E$

\BOX{P_a}else if $p=W$ and $x=(a,\varepsilon)$,

\BOX{\neg}else if $p=W$ and $x=(s,\varepsilon)$.

Then, the above proofs can be adapted to obtain the following result:

\begin{theorem}
Let  $\bq=q_1\dots q_k s q_{k+1}\dots q_n$ be a configuration of $M$. Assume that, starting from this configuration, $M$ only visits a one-way infinite tape. Then $C(\blank \bq\blank) C^{\$}$ is of finite order if, and only if, $M$ halts when starting from the configuration~$\bq$.
\end{theorem}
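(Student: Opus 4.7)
The plan is to mirror Sections~\ref{S:construction} and~4, adapting each definition, lemma and cross diagram to the new state set $A$ and transition map~$\delta$. First I would define the Turing-machine step $\tau$ on well-formed finite configurations $\ba=\blank\bq\blank$: if $M$, starting from $\bq$, performs one step yielding a configuration $\bq'$, then $\tau(\ba)=\blank\bq'\blank$ (extended by a $\blank$ on the right when needed to preserve the one-way-tape behavior); if $M$ cannot proceed from $\bq$, i.e.\ the pair (state, scanned symbol) is not in $E$, then $\tau(\ba)$ is undefined, which signals halting. Well-formedness (exactly one occurrence of a state letter $s\in S$) is preserved.

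Next I would re-establish the basic machinery of Section~\ref{S:construction} for the new $\delta$: the notions of \emph{modifying state}, \emph{counting state}, \emph{regular word} and \emph{potential}, along with the analogs of Remark~\ref{R:faitbasesurdelta} and Lemmas~\ref{L:modifyingword}--\ref{L:regularchainelongue}. Each item is verified by inspection of the case list for $\delta$: reading $(v,1)$ keeps modifying states modifying, reading $(\$,\varepsilon)$ sends any counting state other than $C^\$$ to one of $C_\blank^u$, $C_\blank^\blank$ or $\neg$, and reading $(w,0)$ with $w\neq v$ sends $C_u^v$ to $\neg$. With these facts in hand the cross diagrams \eqref{E:CDsanssanscompteur} and \eqref{E:CDavecpotentiel} transpose unchanged.

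The central step is the analog of Lemma~\ref{L:deltaZa2}: for every well-formed $\bq$ on which $\tau$ is defined, the cross diagram
\[
\xymatrix{
& Z(\blank\bq\blank\$,0)\ar[dd] & \\
(Q(C(\blank\bq\blank))C^{\$})^{2}\ar[rr] & & Q(C(\blank\tau(\bq)\blank))C^{\$} \\
& Z(\blank\bq\blank\$,0) &
}
\]
holds, so that one squaring of the group element simulates exactly one step of the Turing machine. I would verify it by tracing through columns: as the counting state at each position reads its input, it transforms into a $W_a$, $C_a^b$, $C_a^s$, $C_s^b$ or $P_u$ carrying just enough information to output the updated symbol at this position. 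The encoding of $\varphi$ occurs at the positions neighboring the state letter $s$, through the rules $C_a^s\mapsto P_d$ (left move) and $C_a^s\mapsto W$ with a subsequent $W\mapsto P_q$ (right move), and symmetrically through $W_a\mapsto P_r$ or $W_a\mapsto P_a$ according to the direction. Reading the terminating $(\$,0)$ with $C^\$$ then cascades via $P_u\mapsto C_\blank^u$ and $W_a\mapsto C_\blank^\blank$, producing $Q(C(\blank\tau(\bq)\blank))C^\$$; a second pass flips the $1$-flags back to $0$. The lower and upper bounds on the order then follow by induction on $k$ as in Lemmas~\ref{L:minorerordre}, \ref{L:order2} and~\ref{L:ordreunpasinduction}: halting is detected because every rule reading a pair $(s,a)$ with $(s,a)\notin E$ outputs $\neg$, forcing infinite potential and, via Lemma~\ref{L:inverseQuneg}, squaring to the identity.

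The main obstacle is the extended case analysis at the center of the cross diagram: unlike the cellular-automaton version where the rule $t$ is a single three-place map, the Turing-machine simulation must separately track the direction of head movement, so the interaction of $W$, $W_a$, $C_a^s$ and $C_s^b$ must be verified against each branch of $\varphi$. Breaking the verification into the three subcases $\varphi(s,a)=(d,r,\leftarrow)$, $\varphi(s,a)=(d,r,\rightarrow)$ and $(s,a)\notin E$ should suffice, but the bookkeeping is delicate because the ``moving state letter'' information must travel one cell left or right through adjacent transitions, and consistency between the $C_a^s$, $W$, $W_a$ and $C_s^b$ rules must be checked in every branch.
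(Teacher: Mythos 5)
Your proposal is correct and follows essentially the same route as the paper, which itself only states that ``the above proofs can be adapted'': you re-establish the regular-word/potential machinery for the new transition table, prove the squaring-equals-one-step cross diagram (the analogue of Lemma~\ref{L:deltaZa2}), and then run the same order bounds, with halting detected by the $\neg$-producing rules for pairs $(s,a)\notin E$. The delicate case analysis over the branches of $\varphi$ that you flag is exactly the verification the paper leaves to the reader.
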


In particular, if $M$ is a universal Turing machine requiring only a one-way infinite tape, the corresponding automaton group has an undecidable order problem. When considering the universal Turing machine with six states and four symbols constructed by Neary and Woods in \cite{NW}, we obtain a Mealy automaton with $22$ symbols and $81$ states.
However, the states $C_r^b$ with $r\in S$ and $b\in Q$ can only appear if we have $(s,a)\in E$, and $d\in Q$ such that $\varphi(s,a)=(d,r,\rightarrow)$. By looking at the transition table of the Neary and Woods Turing machine, we can see that one of the states is missing, so that we can remove the corresponding states from $A$, thus obtaining a Mealy automaton with $77$ states.

Using similar ideas, and starting from the universal Turing machine with $9$ states and $3$~symbols of Neary and Woods, we can see on the transition table that three states cannot appear when the Turing Machine move to the right. Hence we obtain a Mealy automaton with $26$ symbols and $72$ states.

\section{Funding}
The author was supported by CONICYT, Proyectos Regulares FONDECYT no. 1150595, and by project number P27600 of the Austrian Science Fund (FWF).

\end{document}